\newtheorem{theorem}{Theorem}
\newtheorem{lemma}[theorem]{Lemma}
\newtheorem{proposition}[theorem]{Proposition}
\theoremstyle{definition}
\newtheorem{definition}{Definition}
\theoremstyle{remark}
\newtheorem*{remark}{Remark}
\newtheorem*{example}{Example}
\numberwithin{theorem}{section}
\numberwithin{equation}{section}
\newcommand{\N}{\mathbb{N}}
\newcommand{\Z}{\mathbb{Z}}
\newcommand{\R}{\mathbb{R}}
\newcommand{\C}{\mathbb{C}}
\newcommand{\Q}{\mathbb{Q}}
\newcommand{\sgn}{\operatorname{sgn}}
\newcommand{\J}{\mathscr{S}}
\def\H{\mathbb{H}}
\renewcommand{\pmod}[1]{\  \,  \left( \mathrm{mod} \,  #1 \right)}
\begin{document}

\title[A family of vector-valued quantum modular forms of depth two ]{A family of vector-valued quantum modular forms of depth two}

\author{Joshua Males}

\address{Mathematical Institute, University of Cologne, Weyertal 86-90, 50931 Cologne, Germany}
\email{jmales@math.uni-koeln.de}

\begin{abstract}We introduce and investigate an infinite family of functions which are shown to have generalised quantum modular properties. We realise their ``companions" in the lower half plane both as double Eichler integrals and as non-holomorphic theta functions with coefficients given by double error functions. Further, we view these Eichler integrals in a modular setting as parts of certain weight two indefinite theta series.
\end{abstract}

\maketitle

\section{Introduction and statement of results}

In a celebrated paper of Zagier, the concept of quantum modular forms is introduced, following investigations into Kontsevich's ``strange" function \cite{zagier2010quantum,zagier2001vassiliev}, given by
\begin{equation*}
K(q) \coloneqq 1 + \sum_{n = 1}^{\infty} (q;q)_n,
\end{equation*}
where $(a;q)_n \coloneqq \prod_{j = 0}^{n-1} (1-aq^j)$ for $n \in \N_0 \cup \{ \infty \}$ is the $q$-Pochhammer symbol, and $q \coloneqq e^{2 \pi i \tau}$ with $\tau \in \mathbb{H}$. In particular, $K(q)$ does not converge on any open subset of $\C$, but is seen to be a finite sum at any root of unity. Zagier shows that at roots of unity $\zeta$, the function $K(\zeta)$ agrees to infinite order with the Eichler integral of $\eta(\tau) \coloneqq q^{\frac{1}{24}} (q;q)_\infty$ (see page 959 of \cite{zagier2001vassiliev} for the precise definition of the Eichler integral in this context), and hence inherits the Eichler integral's quantum modular properties.

Here we give a brief description of the essence of what a quantum modular form is, and for a full introduction refer the reader to e.g. Chapter 21 of \cite{bringmann2017harmonic}. A quantum modular form is essentially a function $f \colon \mathcal{Q} \rightarrow \C$ for some fixed $ \mathcal{Q} \subseteq \mathbb{Q}$, whose errors of modularity (for $M = \left( \begin{smallmatrix} a & b \\ c & d \end{smallmatrix} \right) \in \text{SL}_2 (\Z)$)

\begin{equation}\label{Equation: error of modularity}
f(\tau) - (c \tau + d)^k f (M \tau)
\end{equation}
are in some sense ``nicer" than the original function. Often, for example, the original function $f$ is defined only on $\Q$, but the errors of modularity can be defined on some open subset of $\R$. The set $\mathcal{Q}$ is called the quantum set of the function $f$.  One may also consider quantum modular forms for $M \in \Gamma$, a subgroup of $\text{SL}_2 (\Z)$.  Further, Zagier also considered so-called ``strong" quantum modular forms, where one considers asymptotic expansions and not just values. Leaving this definition of quantum modular forms intentionally vague allowed Zagier to collect many examples in the same heading.

 Since their introduction, there has been an explosion of research into quantum modular forms in many guises, and they appear in work in many areas. For example, in \cite{folsom2017strange} the authors  consider a certain generalisation of $K(q)$ and investigate its quantum properties. It is shown to have intricate connections to the Habiro ring (introduced in \cite{habiro2004cyclotomic}) and implications therein to combinatorics, in particular to the generating function for ranks of strongly unimodal sequences, are explored. 

There are also deep connections between quantum modular forms and other areas. For example, the connection between them and mock modular forms (surveyed in e.g. \cite{ono2009unearthing}) is investigated in papers such as \cite{bringmann2015unimodal,bringmann2016half,bryson2012unimodal}, among others.
Furthermore, interesting examples of quantum modular forms exist in the interface of physics and knot theory, see e.g. a study of Kashaev invariants of $(p,q)$-torus knots in \cite{hikami2003torus,hikami2015torus} and investigations of Zagier into limits of quantum invariants of $3$-manifolds and knots \cite{zagier2010quantum} - indeed, this is the reason that Zagier chose the name ``quantum" modular forms.

An additional example is given in \cite{bringmann2015,creutzig2016regularised}, where characters of vertex operator algebras are explored, and it is shown that natural parts of these characters are quantum modular forms (of depth one). Motivated in part by these discoveries, the authors of \cite{Higher_depth_QMFs} consider higher-dimensional analogues, defining so-called higher depth quantum modular forms, and provide two examples of such forms of depth two. In the simplest case, these are functions that satisfy

\begin{equation*}
f(\tau) - (c \tau + d)^k f(M \tau) \in \mathsf{Q}_k (\Gamma) \mathcal{O} (R) + \mathcal{O} (R),
\end{equation*}
where $\mathsf{Q}_k (\Gamma)$ is the space of quantum modular forms of weight $k$ on $\Gamma$, and $\mathcal{O}(R)$ is the space of real-analytic functions on $R \subset \R$. As noted in \cite{Higher_depth_QMFs}, the easiest (trivial) examples come from multiplying two depth one forms - however, the two examples discussed therein appear to be non-trivial examples. 

Again, these examples arise from a physics perspective. In fact, they come from the character of a vertex operator algebra $W(p)_{A_2}$, where $p \geq 2$, associated to the root lattice of type $A_2$ of the simple Lie algebra $\mathfrak{sl}_3$. The authors show that the character can be decomposed into two distinct functions, each of which are quantum modular forms of depth two on some subgroup of the full modular group. In a follow-up paper \cite{Higher_depth_QMFs_2} the same authors also show that their functions can be viewed as vector-valued quantum modular forms of depth two on all $\text{SL}_2 (\Z)$ (see Section \ref{Section: defining QMFs} for definitions).

In this paper we require \eqref{Equation: error of modularity} to be real-analytic, and the functions we consider will satisfy the properties of strong quantum modular forms. We construct a generalisation of a function called $F_1$ defined in \cite{Higher_depth_QMFs,Higher_depth_QMFs_2}. In doing so, we provide an infinite family of non-trivial vector-valued quantum modular forms of depth two. We define our generalisation $F$ as a sum of three terms, $F(q) \coloneqq F_1 (q) + F_2(q) + F_3 (q)$ where  

\begin{equation*}
F_1 (q) \coloneqq \sum_{\alpha \in \J} \varepsilon(\alpha) \sum_{n \in \alpha + \N_0^2 } q^{Q(n)}
\end{equation*}
is a weighted sum of partial theta functions, and where $F_2, F_3$ are one-dimensional sums arising from the boundary term $n = 0$ in a double Eichler integral. Here, $Q(n)$ is a positive definite integral binary quadratic form, $\J$ is a finite set of pairs in $\mathbb{Q}^2$, and $\varepsilon \colon \J \rightarrow \R \backslash \{0\}$. Both $\J$ and $\varepsilon$ are required to satisfy some symmetry conditions - see Section \ref{Section: F, J and Q(x)} for the full definitions.

\begin{remark}
	The function $F_1$ of Bringmann, Kaszian, and Milas as defined in \cite{Higher_depth_QMFs,Higher_depth_QMFs_2} is a direct specialization of the function $F$ presented here, specialized to a certain set of six pairs of rational points, the specific quadratic form $Q(x) = 3x_1^2 + 3x_1x_2 + x_2^2$, and a fixed $\varepsilon$.
	In particular we have conflicting notation - note that the functions $F_1,F_2$ given in \cite{Higher_depth_QMFs,Higher_depth_QMFs_2} and the functions $F_1,F_2$ given in the present paper are different. 
\end{remark}

Analagously to \cite{Higher_depth_QMFs_2}, we show that $F$ satisfies the following (see Theorem \ref{Theorem: Main Theorem - Paper} for a precise statement).

\begin{theorem}\label{Theorem: introduction version of main theorem}
	The function $F$ is a sum of components of a vector-valued quantum modular form of depth two and weight one on $\text{SL}_2 (\Z)$ with some explicit quantum set $\mathcal{Q}$ defined in Section \ref{Section: quantum set}. In some special cases, $F$ itself is a single component of a vector-valued form.
\end{theorem}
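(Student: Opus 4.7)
The plan is to realize $F$ as the holomorphic part of a non-holomorphic companion $\widehat F$ whose transformation law under $\mathrm{SL}_2(\Z)$ can be computed directly, and then to extract the errors of modularity of $F$ by subtracting this companion. This mirrors the strategy of \cite{Higher_depth_QMFs_2}, but carried out uniformly for the general data $(Q,\J,\varepsilon)$ rather than the single example considered there.

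\emph{Step one: realize $F$ as a boundary value of a double Eichler integral.} Each partial theta series $\sum_{n \in \alpha + \N_0^2} q^{Q(n)}$ appearing in $F_1$ can be obtained as the limit, as $\tau$ approaches the real line from the upper half plane, of a double Eichler integral of a product of unary weight $\tfrac{3}{2}$ theta functions, of schematic form $\int_{\overline\tau}^{i\infty}\!\!\int_{w_1}^{i\infty} \frac{\vartheta_1(w_1)\,\vartheta_2(w_2)}{\sqrt{-i(w_1-\tau)}\sqrt{-i(w_2-\tau)}}\,dw_2\,dw_1$, where the unary thetas are determined from $Q$ and $\alpha$ by completing the square in the binary form. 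The one-dimensional sums $F_2$ and $F_3$ are precisely the boundary contributions from $n=0$ which must be separated off for the double integral to converge cleanly over $\alpha + \N_0^2$, and hence they sit naturally as part of the completion.

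\emph{Step two: build the vector-valued completion.} Summing over $\alpha \in \J$ weighted by $\varepsilon(\alpha)$, I would define $\widehat F$ on the lower half plane as an indefinite theta series with kernel built from the double error function $E_2$ of \cite{Higher_depth_QMFs}, so that its holomorphic limit recovers $F$. The symmetry conditions imposed on $(\J,\varepsilon)$ ensure that the admissible cosets modulo the lattice attached to $Q$ form a finite set closed under the Weil representation, so that the various completions assemble into a single vector. A higher-depth Vign\'eras-type result, of the sort applied in \cite{Higher_depth_QMFs,Higher_depth_QMFs_2}, then yields that this vector transforms as a non-holomorphic vector-valued modular form of weight one on $\mathrm{SL}_2(\Z)$.

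\emph{Step three: extract the error of modularity and identify $\mathcal Q$.} The difference $\widehat F - F$ is an explicit double Eichler integral from $\overline\tau$ to $-i\infty$; exchanging the order of integration and invoking the weight $\tfrac{1}{2}$ transformations of the unary theta series rewrites this difference, on a suitable open $R \subseteq \R$, as a finite sum of products of a depth-one quantum modular form (a weight $\tfrac{1}{2}$ Eichler integral) with a real-analytic function, plus a globally real-analytic remainder. This is exactly the template for depth two, weight one, vector-valued quantum modularity in the sense of \cite{Higher_depth_QMFs_2}. The quantum set $\mathcal Q$ is characterised as those rationals at which $F(q)$ admits a well-defined finite radial limit, identified explicitly in Section \ref{Section: quantum set}, and the ``special cases'' in the statement correspond to choices of $(\J,\varepsilon,Q)$ that collapse the Weil orbit to a single coset. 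The principal technical obstacle I expect lies in step two: verifying the Vign\'eras hypotheses for the double error kernel in the generality of an arbitrary positive-definite integral binary $Q$ and an arbitrary admissible pair $(\J,\varepsilon)$ requires carefully tracking how the associated lattice cosets interact with the Weil representation of $\mathrm{SL}_2(\Z)$ and how the imposed symmetries propagate through the transformation.
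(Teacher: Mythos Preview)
Your overall architecture is right and matches the paper: construct a double Eichler integral companion $\mathcal{E}$ whose quantum modularity is known, and link it to $F$ at the points of $\mathcal{Q}$. Two things deserve comment, one a genuine difference in route and one a gap.

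\textbf{A different route for modularity.} In Step~2 you propose to obtain the modular transformation of the companion via an indefinite theta realization with an $E_2$ kernel and a Vign\'eras-type criterion. The paper does build such an indefinite theta series, but only \emph{after} the main theorem, as a separate result (the signature $(2,2)$ completion in Section~\ref{Section: Completed Indefinite theta functions 2}). For the quantum modularity itself the paper takes a more direct path: it rewrites each $\theta_j(\alpha;\omega_1,\omega_2)$ as a finite sum of products of Shimura theta functions $\Theta_1(A,h,N;\cdot)$, and then invokes the general transformation law for double Eichler integrals of vector-valued modular forms (Proposition~\ref{Proposition: transformation for vector-valued}). This avoids checking Vign\'eras hypotheses entirely and makes the vector-valued structure explicit through the finite index sets $\mathcal{A},\mathcal{B}$. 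Your route could in principle be made to work, but the paper's is shorter and gives the ``single component'' criterion for free: it is exactly the constancy of $e\bigl(\tfrac{A_1^2+DA_2^2}{2a_1s}\bigr)$ across $\mathcal{A}$ (and the analogue on $\mathcal{B}$), not a collapse of a Weil orbit.

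\textbf{A gap in the linking step.} Your Steps~1 and~3 are too vague at the point that carries most of the work. The connection between $F$ and $\mathcal{E}$ is \emph{not} that one is a holomorphic limit or holomorphic part of the other, nor that $\widehat F-F$ is a clean Eichler integral. Rather, one shows (Theorem~\ref{Theorem: asymptotics agree}) that the full radial asymptotic expansions of $F\bigl(e^{2\pi i h/k - t}\bigr)$ and of $\mathbb{E}\bigl(h/k+\tfrac{it}{2\pi}\bigr)$ agree term by term (up to $t\mapsto -t$, $h\mapsto -h$). This is done by applying the two-dimensional Euler--Maclaurin formula to both sides, after rewriting $\mathbb{E}$ as a sum of $M_2$-weighted lattice sums (Lemma~\ref{E(tau) is sum of M_2}) and passing to the smoothed $M_2^*$ to handle boundary terms. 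The quantum set $\mathcal{Q}$ is therefore \emph{not} ``the rationals where $F$ has a finite radial limit'' in any soft sense; it is defined precisely so that the growing main term $\tfrac{1}{t}\mathcal{I}_{\mathcal{F}_1}\sum_\alpha\varepsilon(\alpha)\sum_\ell e^{2\pi i\frac{h}{k}Q(\ell+\alpha)}$ vanishes, which is a Gauss-sum computation occupying most of Section~\ref{Section: asymptotic behaviour of F at Certain Roots of Unity}. Without this Euler--Maclaurin matching and the Gauss-sum analysis, there is no mechanism in your outline that actually pins $F$ to $\mathcal{E}$ at the points of $\mathcal{Q}$.
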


Though here we only show the vector-valued version, we note that it is also possible to show that our function $F$ is a quantum modular form of depth two and weight one itself, on a suitably chosen congruence subgroup of $\text{SL}_2 (\Z)$, generalising the situation in \cite{Higher_depth_QMFs}. The connection for Theorem \ref{Theorem: introduction version of main theorem} is made by relating $F$ asymptotically at certain roots of unity to a double Eichler integral $\mathcal{E}$ of the shape

\begin{equation*}
\int_{- \bar{\tau}}^{i \infty} \int_{\omega_1}^{i \infty} \frac{ f_1(\omega_1) f_2(\omega_2) }{ \sqrt{-i(\omega_1 + \tau)} \sqrt{-i(\omega_2 + \tau)}} d \omega_2 d \omega_1 ,
\end{equation*}
where the $f_j $ lie in the space of vector-valued modular forms on $\text{SL}_2 (\Z)$. By a result of \cite{Higher_depth_QMFs_2}, such Eichler integrals possess higher depth vector-valued quantum modular properties (see Proposition \ref{Proposition: transformation for vector-valued}), and so by virtue of the asymptotic agreement at points in $\mathcal{Q}$ of $F$ and $\mathcal{E} $, the function $F$ inherits these properties.

We then place the Eichler integral $\mathcal{E} $ into a modular setting by relating it to an indefinite theta function (see Proposition \ref{Proposition: indefinite theta function} for a precise statement). 

\begin{proposition}
	The indefinite theta function of signature $(2,2)$ defined in Section \ref{Section: Completed Indefinite theta functions 2} has purely non-holomorphic part $\Theta(\tau) \mathcal{E} (\tau)$, where $\Theta(\tau)$ is a theta series of signature $(2,0)$. 
\end{proposition}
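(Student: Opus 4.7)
The plan is to start from the explicit construction in Section \ref{Section: Completed Indefinite theta functions 2} of the completed indefinite theta function $\widehat{\vartheta}(\tau)$ of signature $(2,2)$, which, following the Alexandrov--Banerjee--Manschot--Pioline--Nazaroglu framework, is assembled from a lattice sum with a kernel built from the double error function $E_2$. I would first fix the decomposition of the signature $(2,2)$ quadratic space so that it visibly splits into a positive definite piece carrying $Q$ (and hence producing $\Theta(\tau)$) and a pair of null/indefinite directions controlled by auxiliary vectors tailored to the weight-$3/2$ unary forms $f_1,f_2$ whose iterated integral defines $\mathcal{E}(\tau)$.

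Next, I would separate the completion kernel $E_2$ into a \emph{holomorphic} piece (a product of two sign functions), which recombines into the naive holomorphic indefinite theta series and is therefore explicitly \emph{not} part of the non-holomorphic part, and a \emph{purely non-holomorphic} piece equal to $E_2(\cdot)-\sgn(\cdot)\sgn(\cdot)$. On this remainder I would substitute the standard double Gaussian integral representation of $E_2$ (see e.g.\ the identities used in \cite{Higher_depth_QMFs_2}): schematically,
\begin{equation*}
E_2(\kappa;u_1,u_2)-\sgn(u_1)\sgn(u_2)=\frac{1}{\pi}\int_{-\infty}^{u_1}\!\int_{-\infty}^{u_2} e^{-\pi(x_1^2-2\kappa x_1x_2+x_2^2)/(1-\kappa^2)}\,\frac{dx_1\,dx_2}{\sqrt{1-\kappa^2}}+\text{(tail)},
\end{equation*}
and then change variables so that the integration parameters become the Eichler integral variables $\omega_1,\omega_2$ with the lower limit $-\bar\tau$ and upper limit $i\infty$, matching precisely the shape
\begin{equation*}
\mathcal{E}(\tau)=\int_{-\bar\tau}^{i\infty}\!\int_{\omega_1}^{i\infty}\frac{f_1(\omega_1)f_2(\omega_2)}{\sqrt{-i(\omega_1+\tau)}\sqrt{-i(\omega_2+\tau)}}\,d\omega_2\,d\omega_1.
\end{equation*}

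Once the change of variables is performed, the remaining lattice sum factors: the positive definite directions produce exactly the theta series $\Theta(\tau)$ of signature $(2,0)$, while the sums over the indefinite directions, when combined with the Gaussian factors brought out by the variable change, assemble into the weight-$3/2$ unary theta series $f_1,f_2$ under the integrals. Collecting everything gives $\Theta(\tau)\mathcal{E}(\tau)$ as the purely non-holomorphic part.

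The main obstacle will be bookkeeping at the interface between the two steps: one must (i) choose the completion cocycle/kernel in Section \ref{Section: Completed Indefinite theta functions 2} so that the orthogonal splitting of the $(2,2)$ lattice cleanly separates the variables of $E_2$ from those producing $Q(n)$, (ii) verify that the ``tail" pieces (arising from the sign-function terms and from regions of the double integral outside the canonical wedge) either cancel in pairs by the symmetry conditions imposed on $\mathcal J$ and $\varepsilon$ in Section \ref{Section: F, J and Q(x)}, or are themselves holomorphic and thus disappear from the non-holomorphic part, and (iii) check that the indefinite directions, after the substitution, give back precisely the $f_1,f_2$ of the Eichler integral rather than some twisted cousins; this last point is where the Gauss sum arising from the discriminant form of $Q$ enters, and matching it with the multiplier system of $\mathcal{E}$ is the computation one has to execute carefully.
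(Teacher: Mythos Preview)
Your plan misidentifies both the structure of the completion kernel and the mechanism behind the factorisation, and as a result introduces work that is neither needed nor sufficient.

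First, in Section~\ref{Section: Completed Indefinite theta functions 2} the kernel $P(n)$ is not simply $E_2$; it is built (via Theorem~\ref{Theorem: completion to non-holomorphic}) as a sum of an $M_2$ term, two single $M$ terms multiplied by sign functions, and a product of sign functions. The ``purely non-holomorphic'' part in the sense of the proposition is the $M_2$ term alone, denoted $P^-$. Your proposed splitting ``$E_2 = \sgn\cdot\sgn + (E_2 - \sgn\cdot\sgn)$'' does not isolate this piece: by \eqref{Equation: relation between M_2 and E_2} one has $E_2 - \sgn\cdot\sgn = M_2 + \sgn\cdot M + \sgn\cdot M$, so your ``non-holomorphic'' remainder still contains the mixed $\sgn\cdot M$ terms, which are \emph{not} part of $\Theta(\tau)\mathcal{E}(\tau)$. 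The schematic double Gaussian integral you wrote down for $E_2 - \sgn\cdot\sgn$ is therefore aimed at the wrong object.

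Second, and more importantly, the factorisation $\Theta_{A_1,P^-,a}(\tau)=2\,\mathcal{E}_\alpha(\tau)\,\Theta_{A_0,1,(a_1-a_3,a_2-a_4)}(\tau)$ is not obtained by any integral change of variables or by cancellations coming from the symmetry conditions on $\J$ and $\varepsilon$. It is a one-line algebraic consequence of the specific choice $A_1=\left(\begin{smallmatrix}A_0 & A_0\\ A_0 & 0\end{smallmatrix}\right)$: the substitution $(m_1,m_2,m_3,m_4)=(n_1+n_3,n_2+n_4,n_3,n_4)$ block-diagonalises $Q_1$ as $Q(m_1,m_2)-Q(m_3,m_4)$, and since $P^-$ depends only on $(n_3,n_4)=(m_3,m_4)$ the lattice sum factors immediately. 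The identification of the $(m_3,m_4)$-sum with $2\,\mathcal{E}_\alpha(\tau)$ then quotes Lemma~\ref{E(tau) is sum of M_2}, where the link between $M_2$ and the double Eichler integral has already been established. No Gauss sums, no $\varepsilon$-weighted cancellations, and no separate tail analysis enter; the proposition is for a single $\alpha$, and the set $\J$ plays no role in its proof.
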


The paper is organised as follows. We begin in Section \ref{Section: Prelims} by reviewing basic properties of special functions, and detailing results that will be needed throughout the paper. In Section \ref{Section: F, J and Q(x)} we introduce the function $F$ that we concentrate on for the rest of the paper. We define the quantum set $\mathcal{Q}$ in Section \ref{Section: quantum set} before we find the asymptotic behaviour of  $F$ at certain roots of unity in Section \ref{Section: asymptotic behaviour of F at Certain Roots of Unity}. In Section \ref{Section: Multiple Eichler Integrals of weight 1} a double Eichler integral is introduced and shown, via the use of Shimura theta functions, to exhibit modular properties. Next we turn to Section \ref{Section: Indefinite theta functions 1} where we show that the double Eichler integral can be viewed as a piece of a certain indefinite theta series. Given results in this section, we proceed to prove the main results regarding quantum modularity of $F$ in Section \ref{Section: Proof of main theorem}. We set the double Eichler integral in a modular setting in Section \ref{Section: Completed Indefinite theta functions 2}, using boosted complementary error functions and a result of \cite{Generalised_error_functions}. Finally, we conclude the paper in Section \ref{Section: further questions} with some questions which will be investigated in further work.

\section*{Ackowledgments}

The author would like to thank Kathrin Bringmann, Stephan Ehlen, Jonas Kaszian, and Larry Rolen for insightful discussions as well as useful comments on an earlier version of this paper. The author would also like to thank the referee for numerous helpful comments.

\section{Preliminaries}\label{Section: Prelims}
We begin by introducing some basic functions along with recalling relevant results pertinent to the rest of the paper.
\subsection{Error functions}
We first define a rescaled version of the usual one-dimensional error function. For $u \in \R$ set

\begin{equation}
E(u) \coloneqq 2 \int_0^u e^{- \pi \omega^2} d \omega .
\end{equation}
This has first derivative given by $E'(u) = 2 e^{- \pi u^2}$. The function $E(u)$ may also be written using incomplete gamma functions $\Gamma(a, u) \coloneqq \int_{u}^{\infty} e^{- \omega} \omega^{a-1} d \omega$, with $a > 0$, via the formula

\begin{equation}\label{Equation: E(u) in terms of incomplete gamma functions}
E(u) = \sgn(u) \left( 1 - \frac{1}{\sqrt{\pi}} \Gamma\left(\frac{1}{2} , \pi u^2\right)  \right) ,
\end{equation}
where we set 

\begin{equation*}
\sgn(x) \coloneqq \begin{dcases}
1 & \text { if } x > 0 ,\\
0 & \text { if } x = 0 ,\\
-1 & \text { if } x < 0 .\\
\end{dcases}
\end{equation*}
We will also make use of an augmented $\sgn$ function, defined by $\sgn^*(x) \coloneqq \sgn(x)$ for $x \neq 0$ and $\sgn^* (0) \coloneqq 1$.

We also require, for non-zero $u$, the function

\begin{equation*}
M(u) \coloneqq \frac{i}{\pi} \int_{\R - iu} \frac{e^{ - \pi \omega^2 - 2 \pi i u \omega}}{\omega} d \omega.
\end{equation*}
A relation between $M(u)$ and $E(u)$, for non-zero $u$, is given by

\begin{equation}
M(u) = E(u) - \sgn(u).
\end{equation}
Therefore, using \eqref{Equation: E(u) in terms of incomplete gamma functions}, we have that

\begin{equation}\label{Equation: M(u) in terms of incomplete gamma functions}
M(u) =\frac{ - \sgn(u) }{\sqrt{\pi}} \Gamma\left(\frac{1}{2} , \pi u^2\right) .
\end{equation}

We further need the two-dimensional analogues of the above functions. Following \cite{Generalised_error_functions} and changing notation slightly, we define $E_2 \colon \R \times \R^2 \rightarrow \R$ by

\begin{equation*}
E_2 (\kappa ; u) \coloneqq \int_{\R^2} \sgn(\omega_1) \sgn(\omega_2 + \kappa \omega_1) e^{- \pi \left( (\omega_1 - u_1)^2 + (\omega_2 - u_2)^2 \right)} d \omega_1 d \omega_2, 
\end{equation*}
where throughout we denote components of vectors just with subscripts. Note that

\begin{equation*}
E_2 (\kappa; -u) = E_2 (\kappa; u).
\end{equation*}
Again following \cite{Generalised_error_functions}, for $u_2, u_1 - \kappa u_2 \neq 0$, we define

\begin{equation*}
M_2 (\kappa; u_1, u_2) \coloneqq - \frac{1}{\pi^2} \int_{\R - iu_2} \int_{\R - i u_1} \frac{ e^{ - \pi \omega_1^2 - \pi \omega_2^2 - 2 \pi i (u_1 \omega_1 + u_2 \omega_2)}}{\omega_2 (\omega_1 - \kappa \omega_2)} d\omega_1 d \omega_2 .
\end{equation*}
Then we have that

\begin{equation}\label{Equation: relation between M_2 and E_2}
\begin{split}
M_2 (\kappa; u_1 , u_2) = & E_2(\kappa; u_1 , u_2) - \sgn(u_2) M (u_1) \\
& - \sgn(u_1 - \kappa u_2) M \left( \frac{u_2 + \kappa u_1}{\sqrt{1 + \kappa^2}} \right) - \sgn(u_1) \sgn(u_2 + \kappa u_1).
\end{split}
\end{equation}
The relation \eqref{Equation: relation between M_2 and E_2} extends the definition of $M_2 (u)$ to include $u_2 = 0$ or $u_1 = \kappa u_2$ - note however that $M_2$ is discontinuous across these loci. Putting $x_1 \coloneqq u_1 - \kappa u_2 , x_2 \coloneqq u_2$ yields 

\begin{equation}
\begin{split}
M_2 (\kappa; u_1 , u_2) = & E_2(\kappa; x_1 + \kappa x_2 , x_2) + \sgn(x_1)\sgn(x_2) \\
& - \sgn(x_2) E(x_1 + \kappa x_2) - \sgn(x_1) E\left( \frac{ \kappa x_1}{\sqrt{1 + \kappa^2}} + \sqrt{1+ \kappa^2} x_2 \right).
\end{split}
\end{equation}
We also have the first partial derivatives of $M_2$ as 

\begin{equation*}
\begin{split}
& M_2^{(1,0)}(\kappa; u_1, u_2) = 2 e^{-\pi u_1^2} M(u_2) + \frac{2 \kappa}{\sqrt{1 + \kappa^2}} e^{ \frac{- \pi (u_2 + \kappa u_1)^2 }{1 + \kappa^2}} M \left(\frac{u_1 - \kappa u_2}{\sqrt{1+ \kappa^2}}\right) ,\\
& M_2^{(0,1)} (\kappa; u_1, u_2) = \frac{2}{\sqrt{1 + \kappa^2}} e^{ \frac{- \pi (u_2 + \kappa u_1)^2 }{1 + \kappa^2}} M \left(\frac{u_1 - \kappa u_2}{\sqrt{1+ \kappa^2}}\right) ,
\end{split}
\end{equation*}
along with the first partial derivatives of $E_2$

\begin{equation*}
\begin{split}
& E_2^{(1,0)}(\kappa; u_1, u_2) = 2 e^{-\pi u_1^2} E(u_2) + \frac{2 \kappa}{\sqrt{1 + \kappa^2}} e^{ \frac{- \pi (u_2 + \kappa u_1)^2 }{1 + \kappa^2}} E \left(\frac{u_1 - \kappa u_2}{\sqrt{1+ \kappa^2}}\right) ,\\
& E_2^{(0,1)} (\kappa; u_1, u_2) = \frac{2}{\sqrt{1 + \kappa^2}} e^{ \frac{- \pi (u_2 + \kappa u_1)^2 }{1 + \kappa^2}} E \left(\frac{u_1 - \kappa u_2}{\sqrt{1+ \kappa^2}}\right) ,
\end{split}
\end{equation*}
all of which follow from Proposition 3.3. of \cite{Generalised_error_functions}.

\subsection{Euler-Maclaurin summation formula}
We state two special cases of the Euler-Maclaurin summation formula, in one and two dimensions, as needed for this paper. \\
Let $B_m (x)$ be the $m$th Bernoulli polynomial which is defined by $\frac{t e^{xt}}{e^t - 1} \eqqcolon \sum_{m \geq 0} B_m (x) \frac{t^m}{m!}$. We recall the property

\begin{equation}\label{equation: Bernoulli polynomial transformation}
B_m (1-x) = (-1)^m B(x).
\end{equation}

The one dimensional case follows a result of Zagier in \cite{zagiervaleurs}, and it implies that, for $\alpha \in \R$ and $F \colon \R \rightarrow \R$ a $C^\infty$ function of rapid decay,
\begin{equation}\label{Euler-Maclaurin summation formula one dim}
\begin{split}
\sum_{n \in \N_0}  F ((n + \alpha) t) \sim &  \frac{ \mathcal{I}_F }{t} - \sum_{ n \geq 0} \frac{B_{n + 1} (\alpha)}{(n + 1)!} F^{(n)} (0) t^{n},
\end{split}
\end{equation}
where we set $\mathcal{I}_F = \int_0^\infty F(x) dx $. By $\sim$ we mean that the difference between the left- and right-hand side is $O(t^N)$ for any $N \in \N$.

We now turn to the two-dimensional case. Let $\alpha \in \R^2$ and $F \colon \R^2 \rightarrow \R$ a $C^\infty$ function of rapid decay. The Euler-Maclaurin summation formula in two dimensions then implies that (generalising another result of Zagier in \cite{zagiervaleurs} to include shifts by $\alpha$)

\begin{equation}\label{Euler-Maclaurin summation formula}
\begin{split}
\sum_{n \in \N_0^2}  F ((n + \alpha) t) \sim &  \frac{ \mathcal{I}_F }{t^2} - \sum_{n_2 \geq 0 } \frac{B_{n_2 + 1} (\alpha_2)}{(n_2 + 1)!} \int_0^\infty F^{(0, n_2)} (x_1 ,0) dx_1 t^{n_2 - 1}  \\
& - \sum_{n_1 \geq 0 } \frac{B_{n_1 + 1} (\alpha_1)}{(n_1 + 1)!} \int_0^\infty F^{(n_1, 0)} ( 0, x_2) dx_2 t^{n_1 - 1} \\
& + \sum_{ n_1 , n_2 \geq 0} \frac{B_{n_2 + 1} (\alpha_2)}{(n_2 + 1)!} \frac{B_{n_1 + 1} (\alpha_1)}{(n_1 + 1)!} F^{(n_1, n_2)} (0,0) t^{n_1 + n_2},
\end{split}
\end{equation}
here with $\mathcal{I}_F = \int_0^\infty \int_0^\infty F(x_1, x_2) dx_1 dx_2 $.

\subsection{Shimura's theta functions}\label{Section: Shimura theta functions}
In \cite{shimura1973modular} Shimura gave transformation laws of certain theta series, which we require here. For $\nu \in \{ 0,1\}, h \in \Z$ and $ N,A \in \N$ with $A \mid N , N \mid hA$ define

\begin{equation}\label{Shimura theta function}
\Theta_\nu (A, h, N; \tau) \coloneqq \sum_{\substack{ m \in \Z \\ m \equiv h \pmod{N}}} m^\nu q^{ \frac{A m^2}{2 N^2}} ,
\end{equation}
where $\tau \in \mathbb{H}$ and $q \coloneqq e^{2 \pi i \tau}$, as usual. Then we have the following transformation formula

\begin{equation}\label{Shmiura theta transformation formula}
\Theta_\nu (A, h, N; M \tau) = e \left( \frac{ab A h^2}{2 N^2} \right) \left( \frac{2 A c}{d} \right) \varepsilon_d (c \tau + d)^{\frac{1}{2} + \nu} \Theta_\nu (A, ah, N; \tau),
\end{equation}
for $M = ( \begin{smallmatrix} a & b \\ c & d \end{smallmatrix} ) \in \Gamma_0 (2 N)$ with $2 \mid b$. Here $e(x)  \coloneqq e^{2 \pi i x}$ and, for odd $d$, $\varepsilon_d = 1$ or $i$ depending on whether $d \equiv 1 \pmod{4}$ or $d \equiv 3 \pmod{4}$ respectively, and $\left( \frac{c}{d} \right)$ is the extended Jacobi symbol. Further, we have that

\begin{equation}\label{Equation: transformation of Shimura for vector valued}
\Theta_\nu \left(A, h, N ; -\frac{1}{\tau} \right) = (- i)^\nu (-i \tau)^{\frac{1}{2} + \nu} A^{- \frac{1}{2}} \sum_{\substack{k \pmod{N} \\ Ak \equiv 0 \pmod{N}}} e\left( \frac{Akh}{N^2} \right) \Theta_\nu (A,k,N; \tau)  .
\end{equation}
We also require the transformations

\begin{equation*}\begin{split}
& \Theta_\nu (A, -h, N; \tau) = (-1)^\nu \Theta_\nu (A, h, N; \tau),
\end{split}
\end{equation*}
and if $h_1 \equiv h_2 \pmod{N}$, then

\begin{equation*}
\Theta_\nu (A, h_1, N; \tau) = \Theta_\nu (A, h_2, N; \tau) .
\end{equation*}

\subsection{Vector-valued quantum modular forms}\label{Section: defining QMFs}
Since the study of vector-valued quantum modular forms has been motivated in the introduction, here we give only the formal definition, following \cite{Higher_depth_QMFs_2}. We begin with the depth one case, before defining those of higher depth.

\begin{definition}
	For $1 \leq j \leq N \in \N$, a collection of functions $f_j \colon \mathcal{Q} \rightarrow \C$ is called a vector-valued quantum modular form of weight $k$ and multiplier $\chi = (\chi_{j,\ell})_{1 \leq j,\ell \leq N}$ and quantum set $\mathcal{Q}$ for $\text{SL}_2 (\Z)$ if, for all $M = \left( \begin{smallmatrix} a & b \\ c & d \end{smallmatrix} \right) \in \text{SL}_2 (\Z)$ we have that
	
	\begin{equation*}
	f_j (\tau) - (c \tau + d)^{-k} \sum_{1 \leq \ell \leq N} \chi_{\ell,j}^{-1} (M) f_\ell(M \tau)
	\end{equation*}
	can be extended to an open subset of $\R$ and is real-analytic there. We denote the vector space of these forms by $\mathsf{Q}_{k} (\chi)$.
	
\end{definition}

\subsection{Higher depth vector-valued quantum modular forms}
We now consider generalisations of vector-valued quantum modular forms, again following \cite{Higher_depth_QMFs_2}.

\begin{definition}
	For $1 \leq j \leq N \in \N$, a collection of functions $f_j \colon \mathcal{Q} \rightarrow \C$ is called a vector-valued quantum modular form of depth $P$,  weight $k$ and multiplier $\chi = (\chi_{j,\ell})_{1 \leq j,\ell \leq N}$ and quantum set $\mathcal{Q}$ for $\Gamma$ if, for all $M = \left( \begin{smallmatrix} a & b \\ c & d \end{smallmatrix} \right) \in \text{SL}_2 (\Z)$ we have that
	
	\begin{equation*}
	f_j (\tau) - (c \tau + d)^{-k} \sum_{1 \leq \ell \leq N} \chi_{\ell,j}^{-1} (M) f_\ell(M \tau) \in \bigoplus_{\ell} \mathsf{Q}_{\kappa_j}^{P_\ell} (\chi_\ell) \mathcal{O} (R),
	\end{equation*}
	where $\ell$ runs through a finite set, $\kappa_\ell \in \frac{1}{2} \Z$, $P_\ell \in \Z$ with max$(P_\ell) = P-1$, $\chi_l$ multipliers, $\mathcal{O}_R$ is the space of real analytic functions on $R \subset \R$ which contains an open subset of $\R$. We also define $\mathsf{Q}_{k}^{1} (\chi) \coloneqq \mathsf{Q}_{\kappa} ( \chi)$, $\mathsf{Q}_{k}^{0} (\chi) \coloneqq 1$, and let $\mathsf{Q}_{k}^{P} (\chi)$ denote the space of forms of weight $k$, depth $P$, and multiplier $\chi$ for $\Gamma$.
\end{definition}

\begin{remark}
	As before, one can consider strong higher depth quantum modular forms, looking at asymptotic expansions and not just values. The functions described in this paper satisfy this stronger condition.
\end{remark}

\subsection{Double Eichler Integrals}\label{Section: double Eichler integrals}
In Section \ref{Section: Multiple Eichler Integrals of weight 1} we consider certain double Eichler integrals and investigate their transformation properties. Here, we recall relevant definitions and results.

Let $f_j \in S_{k_j} (\Gamma, \chi_j)$ be a cusp form of weight $k$ with multiplier $\chi_j$ on $\Gamma \subset \text{SL}_2 (\Z)$. If $k_j = \frac{1}{2}$ then we allow $f_j \in M_{\frac{1}{2}} (\Gamma, \chi_j)$, the space of all holomorphic modular forms of weight $\frac{1}{2}$ with multipler $\chi_j$. We define a double Eichler integral by 

\begin{equation*}
I_{f_1, f_2} (\tau) \coloneqq \int_{- \bar{\tau}}^{i \infty} \int_{\omega_1}^{i \infty} \frac{ f_1(\omega_1) f_2(\omega_2) }{ (-i(\omega_1 + \tau))^{2-k_1} (-i(\omega_2 + \tau))^{2-k_2}} d \omega_2 d \omega_1 ,
\end{equation*} 
along with the multiple error of modularity ($\frac{d}{c} \in \Q$)

\begin{equation*}
r_{f_1 , f_2 , \frac{d}{c}} (\tau) \coloneqq \int_{\frac{d}{c}}^{i \infty} \int_{\omega_1}^{\frac{d}{c}} \frac{f_1(\omega_1) f_2(\omega_2) }{ (-i(\omega_1 + \tau))^{2-k_1} (-i(\omega_2 + \tau))^{2-k_2}} d \omega_2 d \omega_1 .
\end{equation*}
In \cite{Higher_depth_QMFs_2} the authors also prove the following proposition, which we will make use of.

\begin{proposition}\label{Proposition: transformation for vector-valued}
	Consider functions $f_j , g_\ell$ $(1 \leq j \leq N , 1 \leq \ell \leq M)$ that are vector-valued modular forms which satisfy the transformations
	
	\begin{equation*}
	\begin{split}
	& f_j \left(- \frac{1}{\tau} \right) = (- i \tau)^{\kappa_1} \sum_{1 \leq k \leq N} \chi_{j,k}^{-1} f_k (\tau) \hspace{10pt} , \hspace{10pt}	g_{\ell} \left(- \frac{1}{\tau}\right) = (- i \tau)^{\kappa_2} \sum_{1 \leq m \leq N} \psi_{\ell,m}^{-1} g_m (\tau) ,
	\end{split}
	\end{equation*}
	where $\kappa_1, \kappa_2 \in \frac{1}{2} + \N_0$. Then we have the transformation formula 
	
	\begin{equation*}
	\begin{split}
	& I_{f_j , g_{\ell}} (\tau) - (-i \tau)^{\kappa_1 + \kappa_2 - 4} \sum_{\substack{1 \leq k \leq N \\ 1 \leq m \leq M}} \chi_{j,k}^{-1} \psi_{\ell,m}^{-1} I_{f_k, g_m}  \left( - \frac{1}{\tau} \right) \\
	= & \int_0^{i\infty} \int_{\omega_1}^{i \infty} \frac{f_j (\omega_1) g_\ell (\omega_2)}{(-i(\omega_1 + \tau))^{2-\kappa_1} (-i(\omega_2 + \tau))^{2-\kappa_2}} d\omega_1 d\omega_2 + I_{f_j} (\tau) r_{g_\ell} (\tau) -  r_{f_j} (\tau) r_{g_\ell} (\tau).
	\end{split} 
	\end{equation*}
	
\end{proposition}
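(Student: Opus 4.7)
The plan is to transform the integral $I_{f_k, g_m}(-1/\tau)$ appearing on the left-hand side via the substitution $\omega_j \mapsto -1/\omega_j$, apply the vector-valued modular transformations of $f_k$ and $g_m$, and then compare the resulting expression with $I_{f_j, g_\ell}(\tau)$ by an elementary splitting of the two-dimensional contour. The algebraic fact underpinning this approach is that two applications of the inversion transformation to $f_j$ (resp.\ $g_\ell$) must return the form to itself, which forces $(\chi^{-1})^2 = I$ and $(\psi^{-1})^2 = I$ under the principal-branch convention, since $(-i\tau)^{\kappa_1}(i/\tau)^{\kappa_1} = 1$ on $\H$.

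First I would carry out the substitution $\omega_j = -1/u_j$. The outer contour $[1/\bar\tau, i\infty]$ is mapped to a path from $-\bar\tau$ to $0$ in $\H$, and the inner contour $[\omega_1, i\infty]$ to $[u_1, 0]$. The Jacobian contributes $du_j/u_j^2$, and the identity
\begin{equation*}
-i\left(-\tfrac{1}{u_j} - \tfrac{1}{\tau}\right) = \frac{-i(u_j + \tau)}{-u_j\tau}
\end{equation*}
together with $f_k(-1/u_1) = (-iu_1)^{\kappa_1}\sum_n \chi_{k,n}^{-1} f_n(u_1)$ and the analogue for $g_m$ allows a direct bookkeeping: the powers of $u_j$ cancel pairwise, leaving an overall factor $(-i\tau)^{4-\kappa_1-\kappa_2}$ (which cancels the $(-i\tau)^{\kappa_1+\kappa_2-4}$ in front of the sum) and the new integrand $f_n(u_1)g_p(u_2)/\bigl[(-i(u_1+\tau))^{2-\kappa_1}(-i(u_2+\tau))^{2-\kappa_2}\bigr]$. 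Applying $\sum_{k,m}\chi_{j,k}^{-1}\psi_{\ell,m}^{-1}$ against $\sum_{n,p}\chi_{k,n}^{-1}\psi_{m,p}^{-1}$ collapses via the identities above to $\delta_{j,n}\delta_{\ell,p}$, yielding
\begin{equation*}
(-i\tau)^{\kappa_1+\kappa_2-4}\sum_{k,m}\chi_{j,k}^{-1}\psi_{\ell,m}^{-1} I_{f_k,g_m}\left(-\tfrac{1}{\tau}\right) = \int_{-\bar\tau}^{0}\int_{u_1}^{0}\frac{f_j(u_1) g_\ell(u_2)\, du_2\, du_1}{(-i(u_1+\tau))^{2-\kappa_1}(-i(u_2+\tau))^{2-\kappa_2}}.
\end{equation*}

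Next I would decompose the contour of $I_{f_j,g_\ell}(\tau)$ by writing $[-\bar\tau, i\infty] = [-\bar\tau, 0] \cup [0, i\infty]$ for the outer variable and, on the first portion, $[\omega_1, i\infty] = [\omega_1, 0] \cup [0, i\infty]$ for the inner. This splits $I_{f_j,g_\ell}(\tau)$ into three pieces: the lower triangle $\{-\bar\tau \le u_1 \le u_2 \le 0\}$ (matching exactly the integral displayed above, and therefore cancelling in the difference), the upper triangle $\{0 \le \omega_1 \le \omega_2 \le i\infty\}$ (the first term on the right-hand side of the proposition), and the rectangular product region $\{-\bar\tau \le u_1 \le 0\} \times \{0 \le u_2 \le i\infty\}$. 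The rectangular piece factorises as a product of single-variable integrals; writing $\int_{-\bar\tau}^0 = \int_{-\bar\tau}^{i\infty} - \int_0^{i\infty}$ identifies the $u_1$-factor as $I_{f_j}(\tau) - r_{f_j}(\tau)$ and the $u_2$-factor as $r_{g_\ell}(\tau)$, producing the correction $I_{f_j}(\tau)r_{g_\ell}(\tau) - r_{f_j}(\tau)r_{g_\ell}(\tau)$ exactly as claimed.

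The main obstacle I expect is the careful bookkeeping of principal-branch phases for the fractional powers $(-iu_j)^{\kappa_j}$, $(-u_j\tau)^{2-\kappa_j}$, and $(-i\tau)^{\kappa_1+\kappa_2-4}$ under the substitution (since $\kappa_j \in \frac{1}{2}+\N_0$), together with the verification that the combined phases consistently yield the $(\chi^{-1})^2 = I$ relation with the multiplier conventions implicit in the statement. Once the branches are pinned down, the remainder of the argument is purely the two-dimensional contour decomposition described above.
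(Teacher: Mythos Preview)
The paper does not actually prove this proposition: it is quoted from \cite{Higher_depth_QMFs_2} (Bringmann--Kaszian--Milas) and used as a black box, so there is no ``paper's own proof'' to compare against. Your outline is the standard argument and is essentially correct.

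A couple of confirmations. The involution identity $(\chi^{-1})^2 = I$ you invoke is exactly right: applying $\tau\mapsto -1/\tau$ twice to $f_j$ and using that $(-i\tau)^{\kappa_1}(i/\tau)^{\kappa_1}=1$ on $\H$ (both $-i\tau$ and $i/\tau$ lie in the right half-plane, so principal powers multiply) forces $\sum_k \chi_{j,k}^{-1}\chi_{k,n}^{-1}=\delta_{j,n}$. The contour decomposition is also correct: writing
\[
I_{f_j,g_\ell}(\tau)=\underbrace{\int_{-\bar\tau}^{0}\!\int_{\omega_1}^{0}}_{\text{lower triangle}}+\underbrace{\int_{-\bar\tau}^{0}\!\int_{0}^{i\infty}}_{\text{rectangle}}+\underbrace{\int_{0}^{i\infty}\!\int_{\omega_1}^{i\infty}}_{\text{upper triangle}}
\]
the lower triangle cancels against the transformed term, the upper triangle is the first summand on the right, and the rectangle factorises as $(I_{f_j}(\tau)-r_{f_j,0}(\tau))\,r_{g_\ell,0}(\tau)$, giving the remaining two summands. (Here $r_{f_j}$, $r_{g_\ell}$ in the statement are $r_{f_j,0}$, $r_{g_\ell,0}$.) The only genuine work left, as you say, is pinning down the principal-branch factorisation $(-i(\omega_j-1/\tau))^{2-\kappa_j}=(-i(u_j+\tau))^{2-\kappa_j}\cdot(\text{powers of }u_j,\tau)$ along the path; this is routine once one notes that $-i(u_j+\tau)$ and $-iu_j$ stay in the right half-plane for $u_j$ on the contour in $\H$.
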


The one-dimensional version of this proposition can be concluded in a similar way, regarding $g_\ell (\tau)$ as constant. In particular, we define

\begin{equation*}
I_{f_j} (\tau) \coloneqq \int_{- \bar{\tau}}^{i \infty} \frac{f_j (\omega)}{(-i ( \omega + \tau))^{2-k}} d \omega, \hspace{20pt} r_{f_j, \frac{d}{d}} (\tau) \coloneqq \int_{\frac{d}{c}}^{\infty} \frac{f_j (\omega)}{(-i ( \omega + \tau))^{2-k}} d \omega.
\end{equation*}
 If $k = \frac{1}{2}$ then we allow $f_j$ to be in $ M_k (\Gamma, \chi)$. The one dimensional Eichler integral $I_{f_j}$ is defined on $\mathbb{H} \cup \mathbb{Q}$, whereas the error of modularity $r_{f_j, \frac{d}{c}} $ exists on all $\R \backslash \{ - \frac{d}{c} \}$ and is real-analytic there. If $f_j$ is a cusp form, then $r_{f_j, \frac{d}{c}} $ exists on all $\R$. The transformation property then follows from the above.
We note that Proposition \ref{Proposition: transformation for vector-valued} implies that the double Eichler integrals above are vector-valued quantum modular forms of depth two.

\subsection{Gauss Sums}\label{Section:Gauss sums}
Here we recall, without proof, some relevant results on the vanishing of quadratic Gauss sums, which we will use when investigating the radial asymptotic behaviour of our function in Section \ref{Section: asymptotic behaviour of F at Certain Roots of Unity}.

Let $a,b,c \in \N$ and denote the generalised quadratic Gauss sum by

\begin{equation*}
G(a,b,c) \coloneqq \sum_{n \pmod{c}} e^{\frac{2 \pi i (an^2 + bn)}{c} }.
\end{equation*}

Then we have the following Lemma, which follows from basic properties of Gauss sums - see e.g. Chapter 1 of \cite{bruce1998gauss}.

\begin{lemma}\label{Lemma: Gauss sums vanishing}
	The following results on the vanishing of $G(a,b,c)$ hold:
	\begin{enumerate}
		\item If $\gcd(a,c) > 1$ and $\gcd(a,c) \nmid b$ then $G(a,b,c) = 0$. 
		\item If $4 \mid c$, $b$ is odd, and $\gcd(a,c) = 1$ then $G(a,b,c) = 0$.
		\item If $c \equiv 2 \pmod{4}$ and $\gcd(a,c) = 1$ then $G(a,0,c) = 0$.
	\end{enumerate}
\end{lemma}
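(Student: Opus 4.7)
The unifying technique for all three parts is the \emph{translation trick}: substitute $n \mapsto n+s$ in the summation (which is a valid change of variables modulo $c$), expand, and extract the resulting phase factor. Concretely, for any integer $s$,
\begin{equation*}
G(a,b,c) = e^{\frac{2\pi i(as^2 + bs)}{c}} \sum_{n \pmod c} e^{\frac{2\pi i(an^2 + (2as+b)n)}{c}}.
\end{equation*}
If $s$ is chosen so that $2as/c$ is an integer, then $(2as+b)n$ and $bn$ differ by a multiple of $c$ for every $n$, so the inner sum is again $G(a,b,c)$, giving $G(a,b,c) = \lambda\, G(a,b,c)$ with an explicit root of unity $\lambda$. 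Whenever $\lambda\neq 1$, we conclude $G(a,b,c)=0$.

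For part (1), set $d \coloneqq \gcd(a,c)$ and take $s = c/d$. Then $2as/c = 2a/d \in \Z$ (since $d \mid a$), so the inner sum is unchanged. The phase works out to $e^{\frac{2\pi i(as^2+bs)}{c}} = e^{2\pi i \cdot ac/d^2} \cdot e^{2\pi i b/d} = e^{2\pi i b/d}$, where the first factor equals $1$ because $d \mid c$ gives $ac/d^2 = (a/d)(c/d) \in \Z$. The hypothesis $d \nmid b$ forces $e^{2\pi i b/d} \neq 1$, hence $G(a,b,c)=0$.

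For part (2), take $s = c/2$. Then $2as/c = a \in \Z$, so the inner sum is again $G(a,b,c)$. The phase is $e^{2\pi i a c/4} \cdot e^{\pi i b}$; since $4 \mid c$ the first factor is $1$, and $b$ odd gives $e^{\pi i b}=-1$. So $G(a,b,c) = -G(a,b,c)$, yielding $G(a,b,c)=0$. For part (3), again use $s = c/2$ with $b=0$: the phase reduces to $e^{\pi i a c/2} = e^{\pi i a m}$ where $c = 2m$ with $m$ odd; the coprimality $\gcd(a,c)=1$ forces $a$ odd, so $am$ is odd and the phase is $-1$, again giving $G(a,0,c)=0$.

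There is essentially no obstacle here: all three statements reduce to choosing the right translation $s$ so that the resulting automorphism of the summation index multiplies $G$ by a non-trivial root of unity. The only care needed is the bookkeeping of integrality conditions ensuring the inner sum returns exactly to $G(a,b,c)$, which is straightforward from the divisibility hypotheses. Alternatively, each part can be deduced from the multiplicativity/evaluation results in Chapter 1 of \cite{bruce1998gauss}, but the translation trick gives a uniform and self-contained argument.
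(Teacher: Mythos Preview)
Your proof is correct and self-contained. The paper itself does not give a proof of this lemma: it merely states that the result ``follows from basic properties of Gauss sums'' and refers the reader to Chapter~1 of \cite{bruce1998gauss}. Your translation argument $n\mapsto n+s$ (with $s=c/\gcd(a,c)$ for part~(1) and $s=c/2$ for parts~(2) and~(3)) provides exactly the kind of uniform, elementary verification the paper leaves to the cited reference, and all the integrality checks you record are accurate.
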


\subsection{Boosted Error Functions}\label{Section: boosted error functions}
In Section \ref{Section: Completed Indefinite theta functions 2} we relate a double Eichler integral to a signature $(2,2)$ indefinite theta function. To do so, we use techniques described in \cite{Generalised_error_functions}. There, the authors consider so-called boosted error functions and use them to find ``modular completions" of a certain family of indefinite theta functions in signature $(n-2,2)$. A modular completion of a non-modular holomorphic function $f(\tau)$ is any function $g(\tau)$ such that $\widetilde{f} (\tau) \coloneqq f(\tau) + g(\tau)$ is modular non-trivially, i.e. $g(\tau)$ is non-holomorphic.

We recall the relevant simplified results here for convenience in signature $(2,2)$, noting in particular the change in notation ``flips" the conditions of the double null limit situation described in Section 4.3 of \cite{Generalised_error_functions}.

Consider a bilinear form $B(x,y) \coloneqq x^T A y$ for a symmetric $m \times m$ matrix $A$, and its associated quadratic form $Q(x) \coloneqq \frac{1}{2} B(x,x)$. Assume that $Q(x)$ has signature $(2,2)$ and also that, for $\mu \in L \subset \Z^4$, we have $Q(\mu) \in \Z$. Take four vectors $C_1, C_2, C_1' , C_2' \in \R^4$. Then we define the orthogonal projections

\begin{equation*}
\begin{split}
C_{1 \perp 2}  \coloneqq C_1 - \frac{B(C_1, C_2)}{Q(C_2)} C_2 \hspace{10pt} \text{ and } \hspace{10pt} C_{2 \perp 1}  \coloneqq C_2 - \frac{B(C_1, C_2)}{Q(C_1)} C_1 ,
\end{split}
\end{equation*}
along with the discriminant $\Delta (C_1 , C_2) \coloneqq Q(C_1)Q(C_2) - B(C_1, C_2)^2 $.

We let $C_{m'} = C_m '$ and let $\Delta_{\mathcal{I}}$ denote the determinant of the Gram matrix $B(C_n, C_m)_{n,m \in \mathcal{I}}$ where $\mathcal{I}$ is a subset of indices $\{1,1',2,2'\}$. Further, let $D_{m,n}$ be off-diagonal cofactors of the Gram matrix $B(C_m, C_n)_{m,n \in \{1,1',2,2'\}}$.

We require

\begin{enumerate}
	\item $B(C_1 , C_2') = B(C_1' , C_2') = B(C_1' , C_2) = 0$,
	\item $Q (C_1) < 0$ and $Q(C_2) < 0$,
	\item $Q(C_1') = Q(C_2') = 0$,
	\item $B(C_1, C_1') < 0$ and $B(C_2, C_2') < 0$,
	\item $\Delta (C_1, C_2) > 0$,
	\item $M_{00}$ is positive definite,
\end{enumerate}
where $M_{00} \coloneqq \left( \begin{smallmatrix} \Delta_{122'} & D_{1'2'} \\  D_{1'2'} & \Delta_{11'2} \end{smallmatrix} \right)$. Then we define boosted complementary error functions in one and two dimensions by

\begin{equation*}
\begin{split}
M(C; x) & \coloneqq M\left(\frac{B (C, x)}{\sqrt{- Q(C)}} \right) ,\\
M_2(C_1, C_2 ; x) & \coloneqq M_2 \left(\frac{- B(C_1 , C_2)}{\sqrt{\Delta(C_1,C_2)}} ; \frac{B(C_2, x)}{\sqrt{- Q(C_2)}} , \frac{B(C_{1 \perp 2} , x)}{\sqrt{ - Q(C_{1 \perp 2})}} \right) .
\end{split}
\end{equation*}
The authors of \cite{Generalised_error_functions} then provide the following Theorem, describing the completion of a certain theta function.

\begin{theorem}\label{Theorem: completion to non-holomorphic}
	Under the conditions above, consider the locally constant function given by 
	\begin{equation*}
	\Phi (x) \coloneqq (\sgn \left(B(C_1,x) \right) - \left( \sgn B(C_1',x) \right) ) (\left( \sgn B(C_2,x) \right) - \left( \sgn B(C_2',x) \right) ) .
	\end{equation*}
	Let $\tau = u + iv \in \mathbb{H}$. Then the theta function 
	
	\begin{equation*}
	\vartheta \left[ \Phi (x) \right] (\tau) \coloneqq \sum_{\lambda \in a + \Z^4} \Phi \left(\sqrt{2 v} \lambda \right) q^{Q(\lambda)} ,
	\end{equation*} 
	admits a modular completion to a non-holomorphic theta series 
	
	\begin{equation*}
	\vartheta \left[ \widehat{\Phi} (x) \right] (\tau) \coloneqq \sum_{\lambda \in a + \Z^4} \widehat{\Phi} \left(\sqrt{2 v} \lambda\right) q^{Q(\lambda)} ,
	\end{equation*} 
	of weight two, where $a \in \R^4$. The completion (in terms of the function $\Phi (x)$) is given by
	
	\begin{equation*}
	\begin{split}
	\hat{\Phi} (x) - \Phi (x) = & M_2 \left(C_1, C_2 ; x\right) + \left(\sgn \left( B(C_{2 \perp 1}, x) \right) - \left( \sgn B(C_2' , x) \right) \right) M\left(C_1 ; x\right) \\
	& + \left(\sgn \left(B(C_{1 \perp 2}, x) \right)- \sgn \left( B(C_1' , x) \right) \right) M\left(C_2 ; x\right) .
	\end{split}
	\end{equation*}
\end{theorem}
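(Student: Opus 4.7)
My plan is to prove the modular completion statement via Vign\'eras's theta modularity criterion. Recall that this criterion asserts the following: if $f \colon \R^4 \to \C$ has appropriate Schwartz-type decay along positive-definite directions and satisfies the eigenvalue equation $\left(\frac{1}{4\pi} \Delta_A + x \cdot \nabla_x\right) f = \lambda f$, where $\Delta_A$ is the Laplace operator associated with $Q$, then the theta series $\vartheta[f](\tau) = \sum_{\mu \in L+a} f(\sqrt{2v}\mu) q^{Q(\mu)}$ transforms as a modular form of weight $\tfrac{4}{2} + \lambda = 2 + \lambda$. Hence to obtain a weight-two completion it suffices to exhibit $\hat\Phi$ satisfying Vign\'eras's equation with $\lambda = 0$, and agreeing with $\Phi(x)$ in the holomorphic limit $v \to \infty$.

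The first step is to verify the decay of $\hat\Phi$. From \eqref{Equation: M(u) in terms of incomplete gamma functions} and \eqref{Equation: M(u) in terms of incomplete gamma functions} one sees that $M(u)$ is bounded and exponentially small as $|u| \to \infty$; consequently, each $M(C_j; x)$ and $M_2(C_1,C_2;x)$ decays like a Gaussian along the directions where $Q$ is negative. The positive-definiteness of the matrix $M_{00}$ in hypothesis (6), together with the null conditions (iii) and the orthogonality (i), guarantees that this Gaussian decay suffices in the positive-definite directions of $Q$ once one integrates over the negative-definite ones, which is what is needed for absolute convergence of $\vartheta[\hat\Phi]$.

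The substantive step is to verify the Vign\'eras PDE term by term. Using the first partial derivatives of $M_2$ and $M$ recorded after \eqref{Equation: relation between M_2 and E_2}, I would compute $\Delta_A \hat\Phi$ and $x \cdot \nabla_x \hat\Phi$. Each derivative of $M_2(C_1,C_2;x)$ produces a Gaussian factor of the form $e^{-\pi B(C,x)^2 / (-Q(C))}$ multiplied by a lower-dimensional $M$-factor, and similarly for $M(C_j;x)$. The null-vector hypothesis $Q(C_j') = 0$, together with $B(C_1,C_2') = B(C_1',C_2) = B(C_1',C_2') = 0$, forces the sign-function prefactors $\sgn(B(C_j',x))$ appearing in the statement to be smooth in the relevant directions and to contribute no spurious boundary terms when $\Delta_A$ is applied. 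Collecting contributions and using the identities between $E_j$ and $M_j$ (namely $M = E - \sgn$ and \eqref{Equation: relation between M_2 and E_2}) reduces everything to the known fact that $E_2(C_1,C_2;x)$, constructed from the generalized error function, satisfies the Vign\'eras equation.

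The main obstacle is the precise reconciliation of the sign-function prefactors in $\hat\Phi - \Phi$ with the discontinuities of $\Phi$ itself, ensuring that $\hat\Phi$ extends continuously (in fact smoothly) across the singular loci $B(C_1,x)=0$, $B(C_2,x)=0$, $B(C_1',x)=0$ and $B(C_2',x)=0$. Across $B(C_2,x)=0$, for instance, the jump in $\Phi(x)$ must be cancelled by the discontinuity of $M_2(C_1,C_2;x)$ (coming from the $\sgn(u_2) M(u_1)$ term in \eqref{Equation: relation between M_2 and E_2}) together with the prefactored $M(C_1;x)$ term in the stated formula for $\hat\Phi - \Phi$. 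Verifying this cancellation across all four loci, and checking that the projections $C_{i \perp j}$ enter with the correct sign and normalisation, is the heart of the argument and pins down exactly the combination of coefficients $\sgn(B(C_{2\perp 1},x)) - \sgn(B(C_2',x))$ and $\sgn(B(C_{1\perp 2},x)) - \sgn(B(C_1',x))$ appearing in the theorem.
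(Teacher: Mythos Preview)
The paper does not actually prove this theorem: it is quoted verbatim as a result from \cite{Generalised_error_functions} (see the sentence immediately preceding the statement in Section~\ref{Section: boosted error functions}), so there is no ``paper's own proof'' to compare against. Your outline via Vign\'eras's criterion is the correct strategy and is essentially what the cited reference does, so in that sense your approach is aligned with the literature.

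That said, what you have written is a plan rather than a proof. The two places where real work remains are precisely the ones you flag: (i) the explicit verification that the Gaussian decay of $M_2(C_1,C_2;x)$ along the negative-definite directions, combined with the positive-definiteness of $M_{00}$, yields the polynomial-times-Gaussian bounds Vign\'eras requires on $\hat\Phi$ and its derivatives; and (ii) the locus-by-locus check that the jump of $\Phi$ across each hyperplane $B(C_j,x)=0$ or $B(C_j',x)=0$ is cancelled by the corresponding discontinuity coming from the $\sgn$ terms in \eqref{Equation: relation between M_2 and E_2} and in the prefactored $M$-terms. Neither of these is done here, and in particular step (ii) is where the specific combination $\sgn(B(C_{2\perp 1},x)) - \sgn(B(C_2',x))$ is forced, so without carrying it out you have not actually established the formula for $\hat\Phi - \Phi$. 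If you want a self-contained proof you will need to fill in both computations; otherwise, citing \cite{Generalised_error_functions} as the paper does is the honest option.
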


\section{The Partial Theta Function $F$}\label{Section: F, J and Q(x)}

Throughout, we consider a positive definite integral binary quadratic form $Q(n) \coloneqq a_1 n_1^2 + a_2 n_1 n_2 + a_3 n_2^2$, where $a_j \in \N$ for $1 \leq j \leq 3$, and $\gcd(a_1, a_2, a_3) = 1$. Let the discriminant of $Q(n)$ be $- D \coloneqq a_2^2 - 4 a_1 a_3 < 0$.

Let $s \geq 1$ be some fixed integer. We write elements of $ \Z[\frac{1}{s}]$ in the form $r + \frac{x}{s}$ where $r,x \in \Z$ and $- \frac{s}{2} \leq x <  \frac{s}{2}$, and let $\alpha^{(k)} \coloneqq (\alpha^{(k)}_{1} , \alpha^{(k)}_{2})$ be a pair of elements in $\Z [ \frac{1}{s}] \times \Z [ \frac{1}{s}] $, labeled by $(k)$. For $s \neq 1$ we require that $\alpha$ does not lie in $\Z^2$ (we could instead add a condition here similar to \eqref{sum of epsilon is 0} for just elements in $\Z^2$. However, this would be equivalent to breaking the vector-valued form into two separate forms, considering those elements in $\Z^2$ in a separate vector, with $s = 1$).

We let

\begin{equation*}\label{definition of J*}
\J^* \coloneqq \left\{ \alpha^{(j)} \mid 1 \leq j \leq N \right\},
\end{equation*}
be a finite set of $N$ such pairs, and define

\begin{equation*}
\J \coloneqq \J^* \cup \left\{ (1 - \alpha^{(j)}) \mid \alpha^{(j)} \in \J^*, 1 \leq j \leq N \right\} ,
\end{equation*}  
where $(1 - \alpha^{(k)}) \coloneqq (1 - \alpha^{(k)}_1 , 1 - \alpha^{(k)}_2)$ is meant componentwise. For convenience, we will often suppress the superscript on elements $\alpha \in \J$. We are free to assume $s$ is minimal, such that each element $\alpha$ has at least one of $x_1 , x_2$ coprime to $s$ (otherwise, we can reduce each fractional part until we are in this situation, possibly splitting into different sets with two different $s_1$ and $s_2$).

We also work with subsets of $\J^*$ given by

\begin{equation*}
\begin{split}
\J^{*}_1  \coloneqq \left\{ \alpha  \in \J^* \mid \alpha_1 \in \Z  \right\} \hspace{10pt}\text{ and } \hspace{10pt} \J^{*}_2  \coloneqq \left\{ \alpha  \in \J^* \mid \alpha_2 \in \Z  \right\}.
\end{split}
\end{equation*}
Consider also a function $\varepsilon \colon \J^* \rightarrow \R \backslash \{ 0 \}$ extended to $\J$ by the relation $\varepsilon (1 - \alpha ) = \varepsilon (\alpha)$, such that

\begin{equation}\label{sum of epsilon is 0}
\sum_{\alpha \in \J} \varepsilon(\alpha) = 0.
\end{equation}

For fixed $\J^*$, $\varepsilon$, and $Q(n) = a_1 n_1^2 + a_2 n_1n_2 + a_3 n_2^2$, the function that we concentrate on in this paper is given by

\begin{equation*}
\begin{split}
F (q) \coloneqq  \sum_{\alpha \in \J} \varepsilon(\alpha) \sum_{ n \in \alpha + \N_0^2 } q^{Q(n)} & -  \frac{1}{2} \sum_{\alpha \in \J^*_1 } \varepsilon(\alpha) \sgn^* (\alpha_1)  \left( \sum_{j \in 1- \alpha_2 + \mathbb{N}_0 }  q^{a_3 j^2} -    \sum_{j \in  \alpha_2 + \mathbb{N}_0 }  q^{a_3 j^2} \right) \\
& - \frac{1}{2} \sum_{\alpha \in \J^*_2}  \varepsilon(\alpha) \sgn^* (\alpha_2) \left( \sum_{j \in 1- \alpha_1 + \mathbb{N}_0 }  q^{a_1 j^2} -   \sum_{j \in  \alpha_1 + \mathbb{N}_0 }  q^{a_1 j^2} \right) .
\end{split}
\end{equation*}

\textit{Throughout, if $\alpha_1 , \alpha_2 \in \Z$ then we omit possible $n = (0,0)$ and $j = 0$ terms in summations implicitly. In each case, this is equivalent to subtracting a constant term and so does not affect modularity properties.}

We consider three different parts separately, writing $F(q) = F_1 (q) + F_2 (q) + F_3(q)$, where

\begin{equation*}\label{definition of F_1}
F_1 (q) \coloneqq \sum_{\alpha \in \J}   \varepsilon(\alpha) \sum_{n \in \alpha + \N_0^2 } q^{Q(n)} .
\end{equation*}

We also define  

\begin{equation*}
\begin{split}
& F_2 (q) \coloneqq  -  \frac{1}{2} \sum_{\alpha \in \J^*_1 } \varepsilon(\alpha) \sgn^* (\alpha_1)  \left( \sum_{j \in 1- \alpha_2 + \mathbb{N}_0 }  q^{a_3 j^2} -    \sum_{j \in  \alpha_2 + \mathbb{N}_0 }  q^{a_3 j^2} \right) , \\
& F_3 (q) \coloneqq  - \frac{1}{2} \sum_{\alpha \in \J^*_2}  \varepsilon(\alpha) \sgn^* (\alpha_2) \left( \sum_{j \in 1- \alpha_1 + \mathbb{N}_0 }  q^{a_1 j^2} -   \sum_{j \in  \alpha_1 + \mathbb{N}_0 }  q^{a_1 j^2} \right) .
\end{split}
\end{equation*}

\begin{remark}
	If for each $(a , x) \in \J^*_1$ the element $ (b , 1-x)$ is also in $\J^*_1$ and $\sgn^* (a) = \sgn^* (b)$ as well as $\varepsilon(a,x) = \varepsilon (b,1-x)$, then the function $F_2$ vanishes identically. A similar statement holds for the function $F_3$.
\end{remark}

Although this definition is only the analogue of the function called $F_1$ from \cite{Higher_depth_QMFs}, it is worth noting that results and techniques therein combined with those of the present paper allow us to also consider the obvious generalization of the function called $F_2$ defined by Bringmann, Kaszian, and Milas in \cite{Higher_depth_QMFs}, and to give analogous results. Again note that we have conflicting notation, and our function $F_2$ is different to that in \cite{Higher_depth_QMFs}.\\

\begin{remark}
	It is possible to drop the condition \eqref{sum of epsilon is 0} if we are willing to lose the possibility of having quantum set $\mathbb{Q}$. This is essentially the same as using a vector of quantum sets - one for each fixed $\alpha \in \J^*$ - such that the main term in the Euler-Maclaurin summation formula of the element $\sum_{ n \in \alpha + \N_0^2 } q^{Q(n)} + \sum_{ n \in 1 - \alpha + \N_0^2 } q^{Q(n)}$ vanishes at certain roots of unity dictated by the quantum set. In this case, the largest $\text{SL}_2 (\Z)$-invariant quantum set would be $\mathcal{Q}_1$ for one fixed $\alpha$, as defined in the following section. However, this would be empty in some cases, and we would need to work on suitable subgroups of the full modular group to restore an infinite quantum set.
\end{remark}

\begin{example}
	As a running example we consider the positive definite quadratic form $Q(x) = 2x_1^2 + x_1x_2 + x_2^2$ of discriminant $-D = -7$, along with the set
	\begin{equation*}
	\J^* = \left\{ \left( \frac{1}{4} , \frac{1}{4} \right) , \left( \frac{1}{4}, -\frac{2}{4}  \right) \right\}.
	\end{equation*}
	Further, we set 
	
	\begin{equation*}
	\varepsilon(\alpha) = \begin{dcases}
	1 &\text{ if } \alpha = \left( \frac{1}{4} , \frac{1}{4} \right), \\
	-1 &\text{ if } \alpha = \left( -\frac{2}{4} , \frac{1}{4} \right).
	\end{dcases}
	\end{equation*}
	We see that this set satisfies our condition with $s = 4$, and that both $\J_1^* , \J_2^*$ are empty, so we need only consider $F_1$.
\end{example}

\section[The Quantum Set]{The Quantum Set $\mathcal{Q}$}\label{Section: quantum set}

Here we describe the quantum set for our function $F$, the main idea being that the choice of set will force the main term in the Euler-Maclaurin summation formula to vanish so that we do not obtain a growing term in the asymptotic expansions toward certain points in Section \ref{Section: asymptotic behaviour of F at Certain Roots of Unity}. 

Throughout we write elements of $\mathbb{Q}$ as $h/k$ with $\gcd(h,k) = 1$, and define $\delta \coloneqq \gcd(h,s)$ and $\gamma \coloneqq \gcd(k,s)$. For a fixed $\alpha = (r_1, r_2) + (x_1, x_2)/s \in \J$, set

\begin{equation*}
g(x) = g(x_1 , x_2) \coloneqq \begin{dcases}
\gcd(2a_1 x_1 + a_2 x_2, a_2 x_1 + 2a_3 x_2) & \text{ if } x \neq (0,0), \\
1 &\text{ if } x = (0,0),
\end{dcases}  
\end{equation*} 
with the convention that $\gcd(0,t) = t$ for $t \in \N_0$. We define

\begin{equation*}
\begin{split}
& G \coloneqq \left\{ g(x) \mid \alpha \in \J \right\}.
\end{split}
\end{equation*}
Then the first part of the quantum set is given by
\begin{equation*}
\mathcal{Q}_1 \coloneqq \left\{ \frac{h}{k} \Bigm| \frac{s}{\delta}, \frac{s}{\gamma}  \nmid g \text{ for every $g \in G$ } \right\}.
\end{equation*}

In particular, note that if $g(x) = 1$ for every choice of $\alpha$ then the conditions on $\frac{s}{\delta}$ and $\frac{s}{\gamma}$ are always satisfied away from $h \in s \Z$ or $k \in s\Z$. We also differentiate cases based upon whether or not the following congruence condition holds
\begin{equation}\label{Equation: conditions for quantum set}
Q(x) \pmod{s} \hspace{10pt} \text{ is constant across $\J$} .
\end{equation} 
If in addition we have $s \nmid g$ for every $g \in G$, we set
\begin{equation}\label{Equation: first extra quantum set}
\begin{dcases*}
\mathcal{Q}_2 \coloneqq \left\{ \frac{h}{k} \Bigm| h \in s \Z  \right\} \text{ and } \mathcal{Q}_3 \coloneqq \left\{ \frac{h}{k} \Bigm| k \in s \Z  \right\} & \text{ if \eqref{Equation: conditions for quantum set} holds}, \\
\mathcal{Q}_2 \coloneqq \left\{ \frac{h}{k} \Bigm| h \in s^2 \Z  \right\} \text{ and } \mathcal{Q}_3 \coloneqq\left\{ \frac{h}{k} \Bigm| k \in s^2 \Z  \right\} & \text{else}.
\end{dcases*}
\end{equation}
If $s$ divides some element in $G$ then the situation is more complicated and we will need to differentiate several cases. In this case, it will be easier to define the ``extra" quantum sets $\mathcal{Q}_2$ and $\mathcal{Q}_3$ algorithmically after \eqref{Equation: sum on l in main term of EM} is introduced and investigated. In each case, we find a particular $n \in \N$ and define
\begin{equation*}
\mathcal{Q}_2 \coloneqq \left\{ \frac{h}{k} \Bigm| h \in s^n \Z \right\} \text{ and } \mathcal{Q}_3 \coloneqq \left\{ \frac{h}{k} \Bigm| k \in s^n \Z \right\}.
\end{equation*}

The ``full" quantum set is then defined as $\mathcal{Q} \coloneqq \mathcal{Q}_1 \cup \mathcal{Q}_2 \cup \mathcal{Q}_3$. Notice that for some choices of $s, Q, \J$ this quantum set is somewhat sparse (e.g. if $s = 2,4$ and $a_2 \in 2\Z$ and $2^k \in G$ for some $k \in \N$). 

The transformation formulae of the double Eichler integrals in Section \ref{Section: Multiple Eichler Integrals of weight 1} currently require $\mathcal{Q} = S \mathcal{Q}$, and so we note the following equalities. First, the action of $S$ on a fraction $\frac{h}{k}$ is given by $S(\frac{h}{k}) = \frac{-k}{h}$. Then it is clear that $S \mathcal{Q}_1 = \mathcal{Q}_1$, since each of the numerator and denominator are assumed to have the same property in the definition of $\mathcal{Q}_1$ above. Then notice that, for a fixed choice of $n \in \N$, we have $S \mathcal{Q}_2 = \mathcal{Q}_3$ as we just switch the numerator and denominator, i.e. for $h \in s^n\Z$ we have that $\gcd(k,s) = 1$ and so $S(\frac{h}{k}) = \frac{-k}{h}$ has a denominator lying in $s^n\Z$ and a numerator co-prime to the denominator by construction. The argument is similar as to why $S \mathcal{Q}_3 = \mathcal{Q}_2$. Hence overall we have that $S\mathcal{Q} = \mathcal{Q}$.

\begin{example}\textit{(continued)}
	Continuing our example, we compute $\gcd(2x_1 + x_2, x_1 + 2x_2)$ for each of the elements $x \in \left\{ (1,1) , (1, -2) , (-1,-1) , (-1,2) \right\}$ and find that $G = \left\{1 \right\}$. Furthermore, we have that $Q(x) \equiv 0 \pmod{4}$ for every element in $\J$, hence we take the quantum set $\Q$.
\end{example}

\section{Radial Asymptotic Behaviour of $F$ at Certain Roots of Unity}\label{Section: asymptotic behaviour of F at Certain Roots of Unity}
In this section we aim to find the asymptotic behaviour of the function $F$ at a point $e^{2 \pi i \frac{h}{k} - t}$ as $t \rightarrow 0^+$, with $h/k \in \mathcal{Q}$. To do so, we rewrite $F$ in a way that we may apply the Euler-Maclaurin summation formula.

\subsection{Asymptotic Behaviour of $F_1$}
Decomposing $F$ as above, we concentrate firstly on $F_1$. We have

\begin{equation*}
F_1 \left(e^{2 \pi i \frac{h}{k} - t} \right) = \sum_{\alpha \in \J} \varepsilon(\alpha) \sum_{n \in \alpha + \N_0^2} e^{\left(2 \pi i \frac{h}{k} - t \right) Q(n) } = \sum_{\alpha \in \J}  \varepsilon(\alpha) \sum_{n \in \N_0^2} e^{\left(2 \pi i \frac{h}{k} - t\right){Q(n + \alpha)}}.
\end{equation*} 
Letting $n \mapsto \ell + n \frac{ks}{\delta}$ where $0 \leq \ell_j \leq \frac{ks}{\delta} - 1$ and $\delta \coloneqq \gcd(h,s)$ gives that the sum on $n$ is 

\begin{equation}\label{full sum of F_1}
\sum_{\substack{0 \leq \ell \leq \frac{ks}{\delta} - 1 \\  n  \in \N_0^2}}  e^{\left(2 \pi i \frac{h}{k} - t\right){Q(\ell + n\frac{ks}{\delta} + \alpha)}}.
\end{equation}
Noting that, for $n \in \Z$, we have $ e^{2 \pi i \frac{h}{k} Q(\ell + n\frac{ks}{\delta} + \alpha)} = e^{ 2 \pi i \frac{h}{k} Q(\ell + \alpha)}$ which  is independent of $n$, we can write \eqref{full sum of F_1} as 

\begin{equation*}
\begin{split}
\sum_{0 \leq \ell \leq \frac{ks}{\delta} - 1} e^{ 2 \pi i \frac{h}{k} Q(\ell + \alpha)} \sum_{n \in \frac{\delta}{ks} (\ell + \alpha) + \N_0^2} e^{- t Q(n \frac{ks}{\delta})}.
\end{split}
\end{equation*}
Defining $\mathcal{F}_1 (x) \coloneqq e^{- Q(x)}$ we can therefore write 
\begin{equation}\label{Equation: rewriting F_1 to apply E-M}
F_1 \left( e^{2 \pi i \frac{h}{k} - t} \right) = \sum_{\alpha \in \J} \varepsilon (\alpha)  \sum_{0 \leq \ell \leq \frac{ks}{\delta} - 1} e^{ 2 \pi i \frac{h}{k} Q(\ell + \alpha)} \sum_{n \in \frac{\delta}{ks} (\ell + \alpha) + \N_0^2} \mathcal{F}_1 \left(\frac{ks}{\delta} \sqrt{t} n \right).
\end{equation}
The main term in the Euler-Maclaurin summation formula \eqref{Euler-Maclaurin summation formula} is then given by

\begin{equation}\label{Main term in Euler Maclaurin of F_1}
\frac{\delta^2}{k^2 s^2 t} \mathcal{I}_{\mathcal{F}_1} \sum_{\alpha \in \J} \varepsilon(\alpha) \sum_{0 \leq \ell \leq \frac{ks}{\delta} -1} e^{2 \pi i \frac{h}{k} Q(\ell+\alpha)} ,
\end{equation}
which we will show below vanishes for  $h/k \in \mathcal{Q}$. We may let $\ell$ run modulo $\frac{ks}{\delta}$, since we have that $e^{2\pi i \frac{h}{k} Q(\ell + n \frac{ks}{\delta} + \alpha )} = e^{2 \pi i \frac{h}{k} Q(\ell + \alpha)}$ whenever $n$ is a pair of integers. Hence the sum on $\ell$ equals (writing $\alpha = r + \frac{x}{s}$ as in Section \ref{Section: F, J and Q(x)})

\begin{equation}\label{Equation: sum on l in main term of EM}
\sum_{ \ell \pmod{\frac{ks}{\delta}}} e^{2 \pi i \frac{h}{k} Q \left( \ell+\frac{x}{s} \right)}.
\end{equation}

If $s = 1$ then \eqref{Equation: sum on l in main term of EM} is clearly independent of $\alpha$ and hence the main term vanishes. If $s > 1$ we let $\ell = N + \nu k$, again meant componentwise, with $N \pmod{k}$ and $\nu \pmod{\frac{s}{\delta}}$. Similar to previous calculations, we compute that the sum on $\ell$ in \eqref{Main term in Euler Maclaurin of F_1} is equal to

\begin{equation*}\label{sum on n mod k and nu mod p/delta}
\begin{split}
& \sum_{N \pmod{k}} e^{ \frac{2 \pi i h}{k s^2}\left( (a_1 ( s^2 N_1^2 + 2 s N_1 x_1 ) + a_2 ( s^2 N_1 N_2 + s x_2 N_1 + s x_1 N_2) + a_3 ( s^2 N_2^2 + 2 s x_2 N_2) + Q(x) \right)} \\
\times & \sum_{\nu \pmod{ \frac{s}{\delta}}} e^{2 \pi i \frac{h}{s}  \left( \nu_1 (2 a_1 x_1 + a_2 x_2) + \nu_2 (a_2 x_1 + 2 a_3 x_2) \right)} .
\end{split} 
\end{equation*}
Given this, we see that showing \eqref{Main term in Euler Maclaurin of F_1} is zero becomes equivalent to showing that the following expression vanishes

\begin{equation}\label{Equation: sum on n and nu}
\begin{split}
& \sum_{\alpha \in \J} \varepsilon (\alpha) \sum_{N \pmod{k}} e^{ \frac{2 \pi i h}{k s^2}\left( (a_1 ( s^2 N_1^2 + 2 s N_1 x_1 ) + a_2 ( s^2 N_1 N_2 + s x_2 N_1 + s x_1 N_2) + a_3 ( s^2 N_2^2 + 2 s x_2 N_2) + Q(x) \right)} \\
& \times \sum_{\nu \pmod{ \frac{s}{\delta}}} e^{2 \pi i \frac{h / \delta}{s / \delta}  \left( \nu_1 (2 a_1 x_1 + a_2 x_2) + \nu_2 (a_2 x_1 + 2 a_3 x_2) \right)} .
\end{split} 
\end{equation}

 First, consider values $h/k \in \mathcal{Q}_1$. Since $\gcd(\frac{h}{\delta} , \frac{s}{\delta}) = 1$, the sum on $\nu$ vanishes unless $\frac{s}{\delta} \mid (2 a_1 x_1 + a_2 x_2) $ and $\frac{s}{\delta} \mid (a_2 x_1 + 2 a_3 x_2)$, implying that $\frac{s}{\delta} \mid g(x)$. By construction, this is a contradiction to our assumption on $\mathcal{Q}_1$. Therefore, for $h/k \in \mathcal{Q}_1$ the main term vanishes. It is also easily seen from here that if $s \nmid g(x)$ for each $\alpha$ then the main term will vanish.

Then it remains to show that the main term is zero for our different choices of sets $\mathcal{Q}_2$ and $\mathcal{Q}_3$. First, we consider values of $h \in s\Z$ or $h \in s^2 \Z$ depending on whether \eqref{Equation: conditions for quantum set} holds or not, respectively. For ease of exposition we show this only in the case where \eqref{Equation: conditions for quantum set} holds - the second case follows similarly.

Writing $Q(x) = sX + x_0$ for each choice of $\alpha$, with $0 \leq x_0 < s$ constant across $\J$ by assumption, it follows that it suffices to show

\begin{equation*}\label{final step in main term is 0}
\begin{split}
\sum_{\alpha \in \J} \varepsilon (\alpha) \sum_{N \pmod{k}}  e^{ \frac{2 \pi i h/s}{k}\left( (a_1 ( s N_1^2 + 2 N_1 x_1 ) + a_2 ( s N_1 N_2 + x_2 N_1 + x_1 N_2) + a_3 ( s N_2^2 + 2 x_2 N_2) + X \right)} = 0.
\end{split} 
\end{equation*}
Since $\delta = s$ in $\mathcal{Q}_2$ we see that $\gcd(k,s) = 1$ and so in particular the inverse of $s$ modulo $k$, which we denote by $\bar{s}$, exists. Making the change of variables $N \mapsto N - \bar{s} x$ and using that $h/s \in \Z$ gives

\begin{equation*}
\begin{split}
e^{ \frac{- 2 \pi i \bar{s} x_0 h/s }{k}} \sum_{N \pmod{k}} e^{ \frac{2 \pi i h}{k} Q(N)} \sum_{\alpha \in \J} \varepsilon (\alpha) ,
\end{split} 
\end{equation*}
which vanishes since $\sum_{\alpha \in \J} \varepsilon (\alpha) = 0$. 

Next, consider elements in $\mathcal{Q}_3$. First, fix a choice of $\alpha$. Note that $s$ does not divide both $x_1 , x_2$ for each $\alpha$ by assumption (if this were not the case, then \eqref{Equation: sum on l in main term of EM} is seen to be constant and so we would require the same constant across all choices of $\alpha$, implying that every $\alpha$ lies in $\Z^2$). We have already seen that when $s \nmid g(x)$ the main term vanishes for any value of $k \in \Z$, and so we now assume $s \mid g(x)$ (a similar argument holds for the cases where $s/\delta \mid g(x)$).

We are aiming to find $m \in \N$ such that for $k \in s^m \Z$ the term

\begin{equation*}
\begin{split}
\sum_{N \pmod{k}}  e^{ \frac{2 \pi i h}{s k}\left( a_1 ( s N_1^2 + 2 N_1 x_1 ) + a_2 ( s N_1 N_2 + x_2 N_1 + x_1 N_2) + a_3 ( s N_2^2 + 2 x_2 N_2) \right)}
\end{split} 
\end{equation*}
vanishes (here we have taken the factor $e^{\frac{2 \pi i h Q(x)}{k s^2}}$ out of the sum). Since $s \mid g(x)$ we may define $X_1 \coloneqq (2 a_1 x_1 + a_2 x_2)/s$ and $X_2 \coloneqq (a_2 x_1 + 2 a_3 x_2)/s$, each of which lie in $\Z$, to obtain the two-dimensional Gauss sum (putting $k = s^m$)

\begin{equation}\label{Equation: Gauss sum k = s^n}
\begin{split}
\sum_{N \pmod{s^m}}  e^{ \frac{2 \pi i h}{s^m}\left( a_1 N_1^2 + a_2  N_1 N_2 + a_3  N_2^2 + N_1 X_1 + N_2 X_2  \right)} .
\end{split} 
\end{equation}

The main idea here is to reduce this to a product of one-dimensional Gauss sums and use well-known results. As with most Gauss sums we may reduce \eqref{Equation: Gauss sum k = s^n} to the product of two-dimensional Gauss sums over prime powers (via the Chinese Remainder theorem), and hence consider 

\begin{equation}\label{Equation: Gauss sum k = p^n}
\begin{split}
\sum_{N \pmod{p^n}}  e^{ \frac{2 \pi i h}{p^n}\left( a_1 N_1^2 + a_2  N_1 N_2 + a_3  N_2^2 + N_1 X_1 + N_2 X_2 \right)} ,
\end{split} 
\end{equation}
where $p$ is some prime dividing $s$, $n \in \N$, and $\gcd(h,p) = 1$ by construction. We see that for the main term in the Euler-Maclaurin expansion formula to vanish, it suffices to show that the above sum is zero for any prime dividing $s$. Since $s$ does not divide both $x_1$ and $x_2$ there exists at least one $p^{\ell} \parallel s$ that does not divide both $x_1$ and $x_2$. Fixing such a prime, we see that at least one of $a_1$ and $a_3$ admit an inverse modulo $p^n$. For ease of exposition we assume throughout that $\bar{a}_1$ exists, denoting the inverse of $a_1$. Next, we differentiate situations depending on the parity of $p$.

If $p$ is odd then $\bar{2}$ also exists modulo $p^n$. Completing the square on $N_1$ in the exponential term of \eqref{Equation: Gauss sum k = p^n} gives

\begin{equation*}
\begin{split}
& a_1 N_1^2 + a_2 N_1 N_2 + a_3 N_2^2 + N_1 X_1 + N_2 X_2  \\ 
& \equiv  a_1 (N_1 + \bar{2} \bar{a}_1 (a_2 N_2 + X_1))^2 
 - \bar{4} \bar{a}_1 (a_2 N_2 + X_1)^2 + a_3 N_2^2 + X_2 N_2 \pmod{p^n}.
\end{split}
\end{equation*}
Thus the sum on $N$ becomes (up to constants, after making the shift $N_1 \mapsto N_1 + \bar{2} \bar{a}_1 (a_2 N_2 + X_1) $)

\begin{equation*}
\sum_{N_1 \pmod{p^n}} e^{ \frac{2 \pi i h }{p^n} a_1 N_1^2 }  \sum_{N_2 \pmod{p^{n}}} e^{ \frac{2 \pi i h }{p^{n}} \left( D_1 N_2^2 + 2  x_2^* N_2   \right) } ,
\end{equation*}
where $D_1 \coloneqq  a_3 - \bar{4} \bar{a}_1 a_2^2$ and $x_2^* \coloneqq \frac{x_2 (a_3 - \bar{4} \bar{a}_1 a_2^2)}{p^\ell \prod_j q_j^{n_j}} \in \Z $, with $s = p^\ell \prod_j q_j^{n_j}$ written in its prime decomposition. This is now a product of one-dimensional quadratic Gauss sums. Concentrating on the sum on $N_2$ we consider first the case where $D_1$  is not coprime with $p^n$. That is, we assume $\gcd(D_1, p^n) = p^r$ with $r \geq 1$. Then, if $p^r \nmid x_2^*$ we see by part one of Lemma \ref{Lemma: Gauss sums vanishing} that the Gauss sum vanishes. The second case is where $D_1$ is coprime with $p^n$, implying that $p^\ell \mid x_2$ since $x_2^* \in \Z$. Thus the sum vanishes for any $n \in \N$ unless $p^{\ell} \mid x_2$, which cannot happen as then we would have that $p^{\ell}$ divides both $x_1$ and $x_2$ since $p \mid g(x)$. Hence the sum vanishes for all $k \in s \Z$.

Next we turn to the case of $p = 2$. In particular, we then have that $2 \mid a_2$. Again we have that at least one of $\bar{a}_1$ or $\bar{a}_3$ exist modulo $2^n$ for $n \in \N$, and we assume that $\bar{a}_1$ does for ease of exposition. Letting $N_1 \rightarrow N_1 - \bar{a}_1 \frac{a_2}{2} N_2$ gives us the numerator

\begin{equation*}
a_1 N_1^2 + X_1 N_1 + N_2^2 \left(a_3 - \bar{a}_1 \frac{a_2^2}{4} \right) + N_2 \left(X_2 - X_1 \bar{a}_1 \frac{a_2}{2} \right) \pmod{2^n} ,
\end{equation*}
and so again we obtain a product of two one-dimensional Gauss sums. Explicitly, we have

\begin{equation*}
\sum_{N_1 \pmod{2^n}} e^{ \frac{2 \pi i h}{2^n} \left(a_1 N_1^2 + X_1 N_1\right) }  \sum_{N_2 \pmod{2^n}} e^{ \frac{2 \pi i h}{2^n} \left(D_2 N_2^2  + N_2 \left(X_2 - X_1 \bar{a}_1 \frac{a_2}{2} \right) \right) },
\end{equation*}
with $D_2 \coloneqq a_3 - \bar{a}_1 \frac{a_2^2}{4}$. If $X_1$ is odd then the sum on $N_1$ will vanish for any $n \geq 2$ using part two of Lemma \ref{Lemma: Gauss sums vanishing}, so take $k \in s \Z$ with $4 \mid k$. If $X_1$ is even then we put $N_1 \rightarrow N_1 + X_1/2$ to give the sum on $N_1$ as (up to a constant)

\begin{equation*}
\sum_{N_1 \pmod{2^n}} e^{ \frac{2 \pi i h}{2^n} a_1 N_1^2 } .
\end{equation*}
Using part three of Lemma \ref{Lemma: Gauss sums vanishing} this vanishes only if $n = 1$, and so we may choose $k \in s\Z$ with $2 \parallel k$. The sum on $N_2$ is

\begin{equation*}
\sum_{N_2 \pmod{2^n}} e^{ \frac{2 \pi i h}{2^{n}} \left( D_2 N_2^2 + 2 x_2^* N_2 \right) } ,
\end{equation*}
where $x_2^* \coloneqq \frac{D_2 x_2}{ 2^{\ell} \prod_j q_j^{n_j}}$ and $2^{\ell} \parallel s$. In a similar fashion to the case of odd $p$, this vanishes unless $2^{\ell - 1} \mid x_2$. In this case, let $r$ be such that $2^r \parallel D_2$ and put $t \coloneqq D_2 / 2^r$ odd, so the sum becomes 

\begin{equation*}
2^r \sum_{N_2 \pmod{2^{n-r}}} e^{ \frac{2 \pi i h}{2^{n-r}} \left(t N_2^2 + b N_2 \right) } .
\end{equation*}
If $b \coloneqq \frac{2 t x_2}{2^{\ell} \prod_j q_j^{n_j} }$ is odd then this vanishes for all $ n \geq r + 2$ by part two of Lemma \ref{Lemma: Gauss sums vanishing}, so choose $k \in s \Z$ with $2^{r+2} \mid k$. If it is even then we complete the square by shifting $N_2 \rightarrow N_2 - \bar{t} b/2$ to obtain (up to a constant)

\begin{equation*}
\sum_{N_2 \pmod{2^{n-r}}} e^{ \frac{2 \pi i h t N_2^2 }{2^{n-r}} } ,
\end{equation*}
vanishing only when $n = r + 1$. Here we may then choose $k \in s\Z$ with $2^{r+1} \parallel k$ by part three of Lemma \ref{Lemma: Gauss sums vanishing}.

Choosing overall the minimum $n$ such that the main term vanishes for each choice of $\alpha \in \J$ (and ensuring that it is at least $1$ or $2$ depending on \eqref{Equation: conditions for quantum set}), we may form the extra part of the quantum set. Analogously to \eqref{Equation: first extra quantum set} we define

\begin{equation*}
\mathcal{Q}_2 \coloneqq \left\{ \frac{h}{k} \Bigm| h \in s^n \Z  \right\} \text{ and } \mathcal{Q}_3 \coloneqq \left\{ \frac{h}{k} \Bigm| k \in s^n \Z  \right\} ,
\end{equation*}
along with any necessary conditions on whether higher powers of $s$ may divide $h$ or $k$.

Continuing with analysing the asymptotic behaviour of $F_1$ we next turn to the other terms in the Euler-Maclaurin summation formula \eqref{Euler-Maclaurin summation formula}. With $h/k \in \mathcal{Q}$, the second term is given by

\begin{equation*}
\sum_{\alpha \in \J} \varepsilon(\alpha) \sum_{0 \leq \ell \leq \frac{ks}{\delta} -1} e^{2 \pi i \frac{h}{k} Q(\ell+\alpha)} \sum_{n_2 \geq 0} \frac{B_{n_2 + 1} \left( \frac{\delta ( \ell_2 + \alpha_2)}{ks} \right)}{(n_2 + 1)!} \int_0^\infty \mathcal{F}_1^{ (0, n_2)} (x_1, 0) dx_1 \left( \frac{ks \sqrt{t}}{\delta} \right)^{n_2 - 1} .
\end{equation*}

In the same way as in \cite{Higher_depth_QMFs} we claim that the terms where $n_2$ is even vanish. To see this we first recall that each $\alpha \in \J$ pairs canonically with $1- \alpha$ by construction. Thus if we show that the expression 

\begin{equation*}
\sum_{0 \leq \ell \leq \frac{ks}{\delta} -1} \left( e^{2 \pi i \frac{h}{k} Q(\ell+\alpha)} B_{2 n_2 + 1} \left( \frac{\delta ( \ell_2 + \alpha_2)}{ks} \right) + e^{2 \pi i \frac{h}{k} Q(\ell+1-\alpha)} B_{2 n_2 + 1} \left(\frac{\delta ( \ell_2 + 1- \alpha_2)}{ks}\right) \right) 
\end{equation*}
vanishes, then the claim will follow immediately. Recalling the behaviour of the Bernoulli polynomials \eqref{equation: Bernoulli polynomial transformation} and shifting the second term via $\ell \mapsto -\ell + (-1 + \frac{ks}{\delta}) (1,1)$ gives this immediately.

Treating the terms where $n_2$ is odd (again using the canonical pairing in $\J$) we now see that the Bernoulli polynomial transform no longer cancels, but give the same contribution. Hence the second term in the Euler-Maclaurin summation formula for $F_1$ is

\begin{equation}\label{Equation: second term of asymptotics}
\begin{split}
-2 \sum_{\alpha \in \J^*} \varepsilon (\alpha) \sum_{0 \leq \ell \leq \frac{ks}{\delta} -1} e^{2 \pi i \frac{h}{k} Q(\ell+\alpha)} & \sum_{n_2 \geq 0} \frac{B_{2n_2 + 2} \left( \frac{\delta ( \ell_2 + \alpha_2)}{ks} \right)}{(2 n_2 + 2)!}\int_0^\infty \mathcal{F}_1^{(0, 2n_2 +1)} (x_1 ,0) dx_1 \left( \frac{k^2 s^2 t}{\delta^2} \right)^{n_2}.
\end{split}
\end{equation}
Similarly, the third term in \eqref{Euler-Maclaurin summation formula} is given by 

\begin{equation*}
\begin{split}
-2 \sum_{\alpha \in \J^*} \varepsilon (\alpha) \sum_{0 \leq \ell \leq \frac{ks}{\delta} -1} e^{2 \pi i \frac{h}{k} Q(\ell+\alpha)} & \sum_{n_1 \geq 0} \frac{B_{2n_1 + 2} \left( \frac{\delta ( \ell_1 + \alpha_1)}{ks} \right)}{(2 n_1 + 2)!} \int_0^\infty \mathcal{F}_1^{(2n_1 +1,0)} (0, x_2) dx_2 \left( \frac{k^2 s^2 t}{\delta^2} \right)^{n_1} .
\end{split}
\end{equation*}
The final term of \eqref{Euler-Maclaurin summation formula} is equal to 

\begin{equation*}
\begin{split}
\sum_{\alpha \in \J} \varepsilon (\alpha) & \sum_{0 \leq \ell \leq \frac{ks}{\delta} -1} e^{2 \pi i \frac{h}{k} Q(\ell+\alpha)} \\
\times & \sum_{n_1 , n_2 \geq 0} \frac{B_{n_1 + 1} \left( \frac{\delta ( \ell_1 + \alpha_1)}{ks} \right)}{(n_1 + 1)!} \frac{B_{n_2 + 1} \left( \frac{\delta ( \ell_2 + \alpha_2)}{ks} \right)}{(n_2 + 1)!} \mathcal{F}_1^{(n_1, n_2)} (0,0) \left( \frac{k s \sqrt{t}}{\delta} \right)^{n_1 + n_2} .
\end{split}
\end{equation*}
Proceeding in the same way, only the terms where $n_1 \equiv n_2 \pmod{2}$ are non-zero. Therefore this is equal to 

\begin{equation*}
\begin{split}
2 \sum_{\alpha \in \J^*} \varepsilon (\alpha) & \sum_{0 \leq \ell \leq \frac{ks}{\delta} -1} e^{2 \pi i \frac{h}{k} Q(\ell+\alpha)} \\
\times & \sum_{\substack{n_1 , n_2 \geq 0 \\ n_1 \equiv n_2 \pmod{2}}} \frac{B_{n_1 + 1} \left( \frac{\delta ( \ell_1 + \alpha_1)}{ks} \right)}{(n_1 + 1)!} \frac{B_{n_2 + 1} \left( \frac{\delta ( \ell_2 + \alpha_2)}{ks} \right)}{(n_2 + 1)!} \mathcal{F}_1^{(n_1, n_2)} (0,0) \left( \frac{k s \sqrt{t}}{\delta} \right)^{n_1 + n_2} .
\end{split}
\end{equation*}

\subsection{Asymptotic Behaviour of $F_2$ and $F_3$}

We now focus on the function $F_2$, and use similar techniques to above. Set $\mathcal{F}_2 (x) \coloneqq e^{- a_3 x^2}$, rewrite as in \eqref{Equation: rewriting F_1 to apply E-M}, and use the Euler-Macluarin summation formula in one dimension \eqref{Euler-Maclaurin summation formula one dim} to obtain the main term as

\begin{equation*}
- \frac{\delta}{2 k s \sqrt{t}} \mathcal{I}_{\mathcal{F}_2}  \sum_{\alpha \in \J^*_1} \varepsilon (\alpha) \sgn^* (\alpha_1) \left( \sum_{0 \leq r \leq \frac{ks}{\delta} - 1} e^{2 \pi i \frac{h}{k} a_3 (r + 1 - \alpha_2)^2 } - e^{2 \pi i \frac{h}{k} a_3 (r + \alpha_2)^2} \right) .
\end{equation*}

Letting $r \mapsto \frac{ks}{\delta} - r - 1$ in the first term of the inner summand shows that this vanishes identically. The second term in the one-dimensional Euler-Maclaurin formula for $F_2$ is given by (pairing even terms and noting odd terms vanish as above)

\begin{equation*}
\begin{split}
\frac{1}{2} & \sum_{\alpha \in \J^*_1} \varepsilon(\alpha) \sgn^* (\alpha_1) \sum_{0 \leq r \leq \frac{ks}{\delta} - 1} e^{\frac{2 \pi i h}{k} a_3 (r + (1 - \alpha_2))^2} \sum_{m \geq 0} \frac{ B_{2m+1} \left( \frac{\delta (r + (1 - \alpha_2))}{ks} \right)}{(2m+1)!} \mathcal{F}_2^{(2m)} (0) \left( \frac{k^2 s^2 t}{\delta^2} \right)^{m} \\
& - \frac{1}{2} \sum_{\alpha \in \J^*_1} \varepsilon(\alpha) \sgn^* (\alpha_1) \sum_{0\leq r \leq \frac{ks}{\delta} - 1} e^{\frac{2 \pi i h}{k} a_3 (r + \alpha_2)^2} \sum_{m \geq 0} \frac{ B_{2m+1} \left( \frac{\delta (r + \alpha_2)}{ks} \right)}{(2m+1)!} \mathcal{F}_2^{(2m)} (0) \left( \frac{k^2 s^2 t}{\delta^2} \right)^{m} .
\end{split}
\end{equation*}
The same argument runs for the function $F_3$ with setting $\mathcal{F}_3 (x) \coloneqq e^{- a_1 x^2}$, yielding

\begin{equation*}
\begin{split}
\frac{1}{2} & \sum_{\alpha \in \J^*_2} \varepsilon(\alpha) \sgn^* (\alpha_2) \sum_{0 \leq r \leq \frac{ks}{\delta} - 1} e^{\frac{2 \pi i h}{k} a_1 (r + (1 - \alpha_1))^2} \sum_{m \geq 0} \frac{ B_{2m+1} \left( \frac{\delta (r + (1 - \alpha_1))}{ks} \right)}{(2m+1)!} \mathcal{F}_3^{(2m)} (0) \left( \frac{k^2 s^2 t}{\delta^2} \right)^{m} \\
& - \frac{1}{2} \sum_{\alpha \in \J^*_2} \varepsilon(\alpha) \sgn^* (\alpha_2) \sum_{0\leq r \leq \frac{ks}{\delta} - 1} e^{\frac{2 \pi i h}{k} a_1 (r + \alpha_1)^2} \sum_{m \geq 0} \frac{ B_{2m+1} \left( \frac{\delta (r + \alpha_1)}{ks} \right)}{(2m+1)!} \mathcal{F}_3^{(2m)} (0) \left( \frac{k^2 s^2 t}{\delta^2} \right)^{m} .
\end{split}
\end{equation*}

\section{Double Eichler Integrals of Weight One}\label{Section: Multiple Eichler Integrals of weight 1}
Here we introduce and study a family of double Eichler integrals of weight $1$, and show that they are a part of a vector-valued quantum modular form of depth two and weight one.

Recalling that $Q(n)$ has non-zero coefficients $a_j$ and has discriminant $-D < 0$, for $\alpha \in \J^*$, $\omega_j \in \H$ we set

\begin{equation}\label{Equation: definition of E (tau)}
\mathcal{E}_{\alpha} (\tau) \coloneqq - \frac{\sqrt{D}}{4} \int_{- \bar{\tau}}^{i \infty} \int_{\omega_1}^{i \infty} \frac{\theta_1 (\alpha; \omega_1, \omega_2) + \theta_2 (\alpha; \omega_1, \omega_2)}{\sqrt{-i (\omega_1 + \tau)} \sqrt{-i(\omega_2 + \tau)}} d\omega_2 d\omega_1 ,
\end{equation}
along with theta functions

\begin{equation*}
\theta_1 (\alpha; \omega_1, \omega_2) \coloneqq \frac{1}{a_1} \sum_{n \in \alpha + \Z^2} (2 a_1 n_1 + a_2 n_2) n_2 e^{\frac{\pi i (2 a_1 n_1 + a_2 n_2)^2 \omega_1}{2 a_1} + \frac{ \pi i D  n_2^2 \omega_2}{2 a_1}}
\end{equation*}
and

\begin{equation*}
\theta_2 (\alpha; \omega_1, \omega_2) \coloneqq \frac{1}{a_3} \sum_{n \in \alpha + \Z^2} ( a_2 n_1 + 2 a_3 n_2) n_1 e^{\frac{\pi i (a_2 n_1 + 2 a_3 n_2)^2 \omega_1}{2 a_3} +\frac{ \pi i D n_1^2 \omega_2}{2 a_3}} .
\end{equation*}

In particular, we note that if $\alpha \in \Z^2$, the term $n = (0,0)$ vanishes in each of the theta functions, and therefore so does $\mathcal{E}_{\alpha} (\tau)$ at $n = (0,0)$. We aim to show the following proposition. 

\begin{proposition}\label{Proposition: vector-valued QMF}
	The function $$\mathcal{E} (\tau) \coloneqq \sum_{ \alpha \in \J^*} \varepsilon (\alpha) \mathcal{E}_{ \alpha} (s \tau)$$ is a linear combination of components of a vector-valued quantum modular form of depth two and weight one for $\text{SL}_2 (\Z)$.
\end{proposition}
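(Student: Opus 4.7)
The plan is to realize $\mathcal{E}(\tau)$ as a finite $\R$-linear combination of double Eichler integrals of the shape treated in Section \ref{Section: double Eichler integrals}, so that Proposition \ref{Proposition: transformation for vector-valued} applies directly. First I would rewrite each of the theta functions appearing in \eqref{Equation: definition of E (tau)}, evaluated at $(s\omega_1, s\omega_2)$, as a finite linear combination of tensor products $\Theta_1(A_1,h_1,N;\omega_1)\,\Theta_1(A_2,h_2,N;\omega_2)$ of Shimura theta series of weight $3/2$. For $\theta_1$ the relevant substitution is $m_1 \coloneqq 2a_1 n_1 + a_2 n_2$ and $m_2 \coloneqq n_2$: under this change of variable both the quadratic exponent and the coefficient $(2a_1 n_1 + a_2 n_2)\,n_2 = m_1 m_2$ decouple, and after multiplying through by $s$ to clear the $s^{-1}\Z$-denominators coming from $\alpha = r + x/s$ and splitting into residue classes modulo appropriate multiples of $a_1 s$, one obtains a product of two weight-$3/2$ Shimura thetas (with $\nu = 1$). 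An analogous rewriting via $m_1 \coloneqq a_2 n_1 + 2a_3 n_2$, $m_2 \coloneqq n_1$ handles $\theta_2$.

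Substituting this decomposition into \eqref{Equation: definition of E (tau)} expresses $\mathcal{E}_{\alpha}(s\tau)$ as a finite linear combination of double Eichler integrals $I_{f_j, g_\ell}(\tau)$ where each $f_j$ and $g_\ell$ is a Shimura theta of weight $3/2$. By the transformation formula \eqref{Equation: transformation of Shimura for vector valued}, the collections $\{f_j\}$ and $\{g_\ell\}$ are components of vector-valued modular forms on $\text{SL}_2(\Z)$ with explicit multipliers $\chi, \psi$. Proposition \ref{Proposition: transformation for vector-valued}, applied with $\kappa_1 = \kappa_2 = 3/2$ so that $\kappa_1 + \kappa_2 - 4 = -1$, then yields the $S$-transformation of each $I_{f_j, g_\ell}$ with an error consisting of three pieces: the boundary integral from $0$ to $i\infty$, which is real-analytic in $\tau \in \R$; a product $I_{f_j}(\tau)\,r_{g_\ell}(\tau)$ whose first factor is itself a depth-one vector-valued quantum modular form of weight $1/2$ (via the one-dimensional version of the same proposition) and whose second factor lies in $\mathcal{O}(R)$; and a product $r_{f_j}(\tau)\,r_{g_\ell}(\tau)$ of two real-analytic period functions. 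These errors lie precisely in $\mathsf{Q}_{1/2}^{1}(\chi')\,\mathcal{O}(R) + \mathcal{O}(R)$, matching the depth-two definition. The $T$-action is diagonal on the Shimura basis up to roots of unity involving $Q(\alpha)$, so combined with $S$ it determines the full $\text{SL}_2(\Z)$-action on the vector. Summing over $\alpha \in \J^*$ with weights $\varepsilon(\alpha)$ then presents $\mathcal{E}(\tau)$ as a linear combination of components of a weight-one, depth-two vector-valued quantum modular form on $\text{SL}_2(\Z)$.

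The main obstacle I anticipate is the first step: writing $\theta_1$ and $\theta_2$ uniformly in the Shimura basis across every $\alpha \in \J^*$ with a common level $N$ compatible with the divisibility conditions $A \mid N$ and $N \mid hA$, especially for the $\omega_2$-factor of $\theta_1$, where the quadratic coefficient $D/(2a_1 s)$ typically forces $N$ to absorb factors of $D$ and requires further residue-class splitting. Once the thetas are packaged as components of a single vector-valued weight-$3/2$ form on all of $\text{SL}_2(\Z)$ — which is exactly what the $s$-rescaling in the definition $\mathcal{E}(\tau) = \sum_{\alpha \in \J^*}\varepsilon(\alpha)\,\mathcal{E}_\alpha(s\tau)$ is designed to allow — the remainder of the argument is a routine application of the transformation laws recalled in Section \ref{Section: Shimura theta functions} together with Proposition \ref{Proposition: transformation for vector-valued}.
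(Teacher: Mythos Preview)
Your proposal is correct and follows essentially the same route as the paper: the same decoupling substitutions $m_1 = 2a_1 n_1 + a_2 n_2$, $m_2 = n_2$ (resp.\ $m_1 = a_2 n_1 + 2a_3 n_2$, $m_2 = n_1$) for $\theta_1$ (resp.\ $\theta_2$), a residue-class split to land in a finite sum of products of weight-$3/2$ Shimura thetas, and then a direct appeal to Proposition~\ref{Proposition: transformation for vector-valued} with $\kappa_1=\kappa_2=3/2$. The anticipated obstacle with the $D$ in the $\omega_2$-factor is exactly what the paper resolves via the rescaling identity $\Theta_1(a,b,a;n\tau)=\tfrac{1}{n}\Theta_1(na,nb,na;\tau)$, passing to level $2Da_1 s$ for the second theta.
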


\begin{remark}
	Though we do not explore the situation here, for a fixed $\alpha$ the term $\mathcal{E}_{ \alpha} ( \tau)$ can itself be viewed as a modular form on a suitable subgroup $\Gamma$ of $\text{SL}_2 (\Z)$. As mentioned in Section \ref{Section: quantum set} a larger quantum set can be used here (if it is not already $\mathbb{Q}$), modifying the level of $\Gamma$ where appropriate.
\end{remark}

\begin{proof}[Proof of Proposition \ref{Proposition: vector-valued QMF}]
	We start by rewriting $\mathcal{E} (\tau)$ in terms of Shimura theta functions $\Theta_1 (A, h, N; \tau)$ - see Section \ref{Section: Shimura theta functions} for the relevant definitions.
	For $\theta_1$ set $\nu_1 = 2 a_1 n_1 + a_2 n_2 \in 2 a_1 \alpha_1 + a_2 \alpha_2 + \Z$ and $\nu_2 = n_2 \in \alpha_2 + \Z$. 
	We further have that $\nu_1 - a_2 \nu_2 = 2 a_1 n_1 \in 2 a_1 \alpha_1 + 2 a_1 \Z$ .
	
	Putting these into the definition we obtain that
	
	\begin{equation*}
	\theta_1 (\alpha; \omega_1, \omega_2) = \frac{1}{a_1} \sum_{\substack{\nu \in (2 a_1 \alpha_1 + a_2 \alpha_2, \alpha_2) + \Z^2 \\ \nu_1 - a_2 \nu_2 \in 2 a_1 \alpha_1 + 2 a_1 \Z}} \nu_1 \nu_2 e^{\frac{\pi i \nu_1^2 \omega_1}{2 a_1} + \frac{\pi i D \nu_2^2 \omega_2}{2 a_1}} .
	\end{equation*}
	We then rewrite $\theta_1 (\alpha; \omega_1, \omega_2)$ as
	
	\begin{equation*}
	\frac{1}{a_1} \sum_{\varrho \in \{ 0,1,\dots, 2a_1 -1\}} \left( \sum_{\nu_1 \in 2 a_1 \alpha_1 + a_2 \alpha_2 + a_2 \varrho + 2 a_1 \Z } {\nu_1} e^{\frac{\pi i \nu_1^2 \omega_1}{2 a_1}} \sum_{\nu_2 \in \alpha_2 + \varrho + 2 a_1 \Z }  \nu_2 e^{ \frac{ \pi i D \nu_2^2 \omega_2}{2 a_1}} \right) . 
	\end{equation*}
	Summing over $\alpha$ in the set $\J^*$ then gives
	
	\begin{equation*}
	\begin{split}
	\sum_{\alpha \in \J^*} \varepsilon(\alpha) \theta_1(\alpha ; \omega_1, \omega_2) & = \frac{1}{a_1 s^2} \sum_{A \in \mathcal{A}} \varepsilon_1(A) \sum_{\nu_1 \equiv A_1 \pmod{2 a_1 s}} \nu_1 e^{ \frac{\pi i \nu_1^2 \omega_1}{2  a_1 s^2}}  \sum_{\nu_2 \equiv A_2 \pmod{2 a_1 s}} \nu_2 e^{\frac{ \pi i D \nu_2^2 \omega_2}{2 a_1 s^2}} \\
	&= \frac{1}{a_1 s^2} \sum_{A \in \mathcal{A}} \varepsilon_1(A) \Theta_1 \left(2 a_1 s, A_1, 2 a_1 s; \frac{\omega_1}{s}\right) \Theta_1 \left(2 a_1 s, A_2, 2 a_1 s; \frac{ D \omega_2}{s}\right),
	\end{split}
	\end{equation*}
	where
	\begin{equation*}
	\mathcal{A} \coloneqq \{ ( 2 a_1 s \alpha_1 +  a_2 s \alpha_2 + a_2 \varrho s , s \alpha_2 + \varrho s) \mid \alpha \in \J^* , 0 \leq \varrho \leq 2a_1 - 1 \} \pmod{2 a_1 s}
	\end{equation*} 
	and $\varepsilon_1(A) \coloneqq \varepsilon (  \frac{A_1 - a_2 A_2}{2 a_1 s}, \frac{A_2}{s} )$. Note that $\mathcal{A}$ has size $2 a_1 N$, where we count elements with multiplicity.

	There is a similar situation for $\theta_2$, where we let  
	\begin{equation*}
	\mathcal{B} \coloneqq \{ (2 a_3 s \alpha_2 + a_2 s \alpha_1 + a_2 \varrho s , s \alpha_1  + \varrho s) \mid \alpha \in \J^* ,  0 \leq \varrho \leq 2a_3 - 1\} \pmod{2a_3 s }
	\end{equation*}
	of size $2a_3N$ along with $\varepsilon_2 (B) \coloneqq \varepsilon (\frac{B_2}{s}, \frac{B_1 - a_2 B_2}{2a_3s} )$. We obtain that $\mathcal{E} (\tau)$ is given by the expression
	
	\begin{equation*}
	\begin{split}
	& - \frac{\sqrt{D}}{4 a_1 s^2} \sum_{A \in \mathcal{A}} \varepsilon_1(A) \int_{- \bar{\tau}}^{i \infty} \int_{\omega_1}^{i \infty} \frac{ \Theta_1 (2 a_1 s, A_1, 2 a_1 s; \omega_1) \Theta_1 (2 a_1 s, A_2, 2 a_1 s; D \omega_2)}{\sqrt{-i (\omega_1 + \tau)} \sqrt{-i(\omega_2 + \tau)}} d\omega_2 d \omega_1 \\
	& - \frac{\sqrt{D}}{4 a_3 s^2} \sum_{B \in \mathcal{B}} \varepsilon_2 (B) \int_{- \bar{\tau}}^{i \infty} \int_{\omega_1}^{i \infty} \frac{ \Theta_1 \left(2 a_3 s, B_1, 2 a_3 s;\omega_1 \right) \Theta_1 \left(2 a_3 s, B_2, 2 a_3 s; D \omega_2 \right)}{{\sqrt{-i (\omega_1 + \tau)} \sqrt{-i(\omega_2 + \tau)}}} d\omega_2 d \omega_1 .
	\end{split}
	\end{equation*}
	
	For $n \in \N$, we note the equality
	
	\begin{equation}\label{Eqaution: Shimura theta tranform for vec valued}
	\begin{split}
	\Theta_1 (a, b, a; n \tau) = \sum_{j \in \Z} (aj + b)q^{\frac{n}{2a} (aj + b)^2} = & \frac{1}{n} \sum_{j \in \Z} (an j + bn) q^{\frac{1}{2an} (an j + bn)^2} = \frac{1}{n} \Theta_1 (na, nb, na; \tau) .
	\end{split}
	\end{equation}
	We split $\mathcal{E} (\tau) = \mathcal{E}_A (\tau) + \mathcal{E}_B (\tau)$ where
	
	\begin{equation*}
	\begin{split}
	\mathcal{E}_A (\tau) & \coloneqq - \frac{\sqrt{D}}{4 a_1 s^2} \sum_{A \in \mathcal{A}} \varepsilon_1(A) \int_{- \bar{\tau}}^{i \infty} \int_{\omega_1}^{i \infty} \frac{ \Theta_1 (2 a_1 s, A_1, 2 a_1 s; \omega_1) \Theta_1 (2 a_1 s, A_2, 2 a_1 s; D \omega_2)}{\sqrt{-i (\omega_1 + \tau)} \sqrt{-i(\omega_2 + \tau)}} d\omega_2 d \omega_1 , \\
	\mathcal{E}_B (\tau) & \coloneqq - \frac{\sqrt{D}}{4 a_3 s^2} \sum_{B \in \mathcal{B}} \varepsilon_2 (B) \int_{- \bar{\tau}}^{i \infty} \int_{\omega_1}^{i \infty} \frac{ \Theta_1 \left(2 a_3 s, B_1, 2 a_3 s;\omega_1 \right) \Theta_1 \left(2 a_3 s, B_2, 2 a_3 s; D \omega_2 \right)}{{\sqrt{-i (\omega_1 + \tau)} \sqrt{-i(\omega_2 + \tau)}}} d\omega_2 d \omega_1 .
	\end{split}
	\end{equation*}
	We concentrate firstly on $\mathcal{E}_A (\tau)$ and, for $k_1 \pmod{2a_1s}$ and $k_2 \pmod{2 D a_1s}$, set 
	
	\begin{equation*}
	I_{k_1, k_2} (\tau) \coloneqq I_{\Theta_1 (2 a_1 s, k_1, 2 a_1 s; \cdot ) , \Theta_1 (2 D a_1 s,  D k_2 , 2 D a_1 s;  \cdot) }  (\tau).
	\end{equation*}
	Via \eqref{Equation: transformation of Shimura for vector valued} we compute the transformations of the two Shimura theta functions as
	
	\begin{equation*}
	\begin{split}
	\Theta_1 \left( 2 a_1 s, k_1, 2 a_1 s; - \frac{1}{\tau} \right) =  \frac{(-i) (-i \tau)^{\frac{3}{2}}}{\sqrt{2a_1 s}} \sum_{j \pmod{2a_1s}} e \left( \frac{j k_1}{2 a_1 s} \right) \Theta_1 (2 a_1 s, j, 2 a_1 s; \tau ) 
	\end{split}
	\end{equation*}
	and
	
	\begin{equation*}
	\begin{split}
	\Theta_1 \left(2 D a_1 s, D k_1 , 2 D a_1 s; - \frac{1}{\tau} \right) =\frac{(-i) (-i \tau)^{\frac{3}{2}}}{\sqrt{2 D a_1 s}} \sum_{j \pmod{2 D a_1s}} e \left( \frac{j k_2}{2 a_1 s} \right) \Theta_1 (2 D a_1 s, j, 2 D a_1 s; \tau ) .
	\end{split}
	\end{equation*}
	Using \eqref{Eqaution: Shimura theta tranform for vec valued} we find
	
	\begin{equation*}
	\mathcal{E}_A (\tau) = - \frac{1}{4 a_1 s^2 \sqrt{D}} \sum_{\alpha \in \J^*} \sum_{A \in \mathcal{A}_\alpha} \varepsilon_1(A) I_{A_1, A_2} (\tau)  ,
	\end{equation*}
	where for a fixed $\alpha \in \J^*$ we define
	\begin{equation*}
	\mathcal{A}_\alpha \coloneqq \{ ( 2 a_1 s \alpha_1 +  a_2 s \alpha_2 + a_2 \varrho s , s \alpha_2 + \varrho s) \mid  0 \leq \varrho \leq 2a_1 - 1 \} \pmod{2 a_1 s}.
	\end{equation*}
	
	Then using Proposition \ref{Proposition: transformation for vector-valued} we obtain the transformation formula
	
	\begin{equation*}
	\begin{split}
	& \sum_{\alpha \in \J^*} \sum_{A \in \mathcal{A}_\alpha} \varepsilon_1(A) I_{A_1, A_2} (\tau) \\
	&- \frac{(- i \tau)^{-1}}{2 a_1 s \sqrt{D}} \sum_{\alpha \in \J^*} \sum_{A \in \mathcal{A}_\alpha} \varepsilon_1(A) \sum_{\substack{k_1 \pmod{2a_1 s} \\ k_2 \pmod{2 D a_1 s}}} e \left( \frac{k_1 A_1 + k_2 A_2}{2 a_1 s} \right) I_{k_1, \frac{k_2}{D}} \left(- \frac{1}{\tau} \right) \\
	= & \sum_{\alpha \in \J^*} \sum_{A \in \mathcal{A}_\alpha}  \varepsilon_1(A) \Bigg ( \int_0^{i \infty} \int_{\omega_1}^{i \infty} \frac{\Theta_1 (2 a_1 s, A_1, 2 a_1 s; \omega_1) \Theta_1 ( 2 D a_1 s, D A_2 ,  2 D a_1 s; \omega_2)}{\sqrt{-i(\omega_1 + \tau)} \sqrt{-i(\omega_2 + \tau)} } d\omega_1 d\omega_2 \\
	& +  I_{\Theta_1 (2 a_1 s, A_1 , 2 a_1 s; \cdot)} (\tau) r_{\Theta_1 ( 2 D a_1 s, D A_2 ,  2 D a_1 s; \cdot)} (\tau) - r_{\Theta_1 (2 a_1 s, A_1, 2 a_1 s; \cdot)} (\tau) r_{\Theta_1 ( 2 D a_1 s, D A_2 ,  2 D a_1 s; \cdot)} (\tau) \Bigg )
	\end{split}
	\end{equation*}
	
	Choosing $(k_1, k_2) = (A_1, D A_2)$ in the second term then returns our original Eichler integral. Each choice of $A \in \mathcal{A}_\alpha$ is then seen to be a component of a vector valued quantum modular form.
	In cases where $e(A) \coloneqq e \left( \frac{A_1^2 + D A_2^2}{2 a_1 s} \right)$ is the same across choices of $A \in \mathcal{A}_\alpha$, one can take this outside of the sum on $A$ as a constant factor, and so $ \sum_{A \in \mathcal{A}_\alpha} \varepsilon_1(A) I_{A_1, A_2} (\tau)$ can be seen as a single component of a vector-valued quantum modular form.
	Furthermore, if $e(A)$ is also constant across choices of $\alpha \in \J^*$ then we view all of $\mathcal{E}_A (\tau)$ as a single component.
	
	A similar statement holds for $\mathcal{E}_B$, and then one can easily put all components into a single vector-valued form in the obvious way. \qedhere
\end{proof}

\begin{example}\textit{(continued)}
	Returning to our example we see that we set
	\begin{equation*}
	\theta_1 (\alpha; \omega_1, \omega_2) \coloneqq \frac{1}{2} \sum_{n \in \alpha + \Z^2} (4 n_1 + n_2) n_2 e^{\frac{\pi i (4 n_1 + n_2)^2 \omega_1}{4} + \frac{ 7 \pi i  n_2^2 \omega_2}{4}}
	\end{equation*}
	along with the similar expression for $\theta_2$. Working through, we set $\nu_1 = 4n_1 + n_2$ and $\nu_2 = n_2$ so that $\nu_1 - \nu_2 = 4n_1$, giving the expression in terms of Shimura theta functions as
	
	\begin{equation*}
	\sum_{\alpha \in \J^*} \varepsilon(\alpha) \theta_1(\alpha ; \omega_1, \omega_2) =  \frac{1}{32} \sum_{A \in \mathcal{A}} \varepsilon_1(A) \Theta_1 \left(16, A_1, 16; \frac{\omega_1}{4}\right) \Theta_1 \left(16, A_2, 16; \frac{ 7 \omega_2}{4}\right),
	\end{equation*}
	where, after a little calculation, we have the set 
	
	\begin{equation*}
	\mathcal{A} = \left\{ (5,1) , (6,2) , (9,5) , (10,6) , (13,9) , (14,10) , (1,13) , (2,14)  \right\} \pmod{16}.
	\end{equation*}
	Further, we set $\varepsilon_1 (A) = \varepsilon(  \frac{A_1 - A_2}{16}, \frac{A_2}{4} )$. It is then simple to check that $e(A)$ is constant across the set $\mathcal{A}$, and hence $\mathcal{E}_A$ is a single component. We also find that the similarly defined function $e(B)$ is constant across the set
	
	\begin{equation*}
	\mathcal{B} = \left\{ (5,1), (3,1) , (1,5), (7,5) \right\} \pmod{8}.
	\end{equation*}
	Hence we view our Eichler integral as a single component of the vector-valued form.

\end{example}

\section{Indefinite Theta Functions}\label{Section: Indefinite theta functions 1}

Here we realise the double Eichler integrals as pieces of indefinite theta functions, with coefficients given by double error functions. We first write $\mathbb{E} (\tau) \coloneqq \mathcal{E} (\frac{\tau}{s})$ in such a way that we can apply the Euler-Maclaurin summation formula.

\begin{lemma}\label{E(tau) is sum of M_2}
	Let $u(n_1, n_2) \coloneqq (u_1 , u_2) = (\sqrt{v} (2 \sqrt{a_1} n_1 + \frac{a_2}{\sqrt{a_1}} n_2), \sqrt{v} m n_2)$, with $m \coloneqq \sqrt{4 a_3 - \frac{a_2^2}{a_1}}$, and $\kappa \coloneqq \frac{a_2}{m \sqrt{a_1}} = \frac{a_2}{\sqrt{D}}$ . We have that
	\begin{equation*}
	\mathbb{E} (\tau) = \frac{1}{2} \sum_{\alpha \in \J^*} \varepsilon (\alpha) \sum_{n \in \alpha + \Z^2 } M_2(\kappa; u) q^{-Q(n)} .
	\end{equation*}
\end{lemma}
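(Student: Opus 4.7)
The natural approach is to compute $\mathcal{E}_\alpha(\tau)$ by interchanging the sums defining $\theta_1, \theta_2$ with the double integral and evaluating the resulting Gaussian double integrals term by term. Since both sides of the proposed identity are linear in $\varepsilon(\alpha)$, it suffices to show, for each fixed $\alpha \in \J^*$, that
\begin{equation*}
\mathcal{E}_\alpha(\tau) = \tfrac{1}{2}\sum_{n \in \alpha + \Z^2} M_2(\kappa; u(n_1,n_2))\, q^{-Q(n)}.
\end{equation*}
Absolute convergence of the theta series on the vertical contour upward from $-\bar\tau$ justifies the interchange.

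For the $n$-th term from $\theta_1(\alpha;\omega_1,\omega_2)$, parametrize $\omega_1 = -u + i(v+t_1)$ and $\omega_2 = -u + i(v + t_1 + t_2)$ with $t_1, t_2 \geq 0$, so that $-i(\omega_j + \tau) = 2v + t_1 + \cdots$. The exponent in $\omega_1$ has coefficient $A_1 := (2a_1 n_1 + a_2 n_2)^2/(2a_1)$ and the exponent in $\omega_2$ has coefficient $B_1 := Dn_2^2/(2a_1)$; the key identity
\begin{equation*}
A_1 + B_1 \;=\; 2Q(n) \;=\; A_2 + B_2
\end{equation*}
(with analogous $A_2, B_2$ for $\theta_2$) yields the expected $q^{-Q(n)}$ factor after collecting phases. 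Substituting $s_j = 2v + t_1 + \cdots$ and then $\omega_j \mapsto \sqrt{s_j}$, the double integral reduces to $-4\, q^{-Q(n)} \int_{\sqrt{2v}}^\infty \int_{\omega_1}^\infty e^{-\pi A_1 \omega_1^2 - \pi B_1 \omega_2^2}\, d\omega_2\, d\omega_1$. A further rescaling $\nu_1 = \omega_1\sqrt{A_1}$, $\nu_2 = \omega_2\sqrt{B_1}$ brings this to a standard Gaussian integral over a wedge, with the corner exactly at $(|u_1|,|u_2|)$ for the $u(n_1,n_2)$ specified in the lemma (since $\sqrt{2vA_1} = |u_1|$, $\sqrt{2vB_1} = |u_2|$), and with wedge slope $|u_2|/|u_1|$.

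One verifies that the $\theta_1$- and $\theta_2$-wedges (the latter obtained from the analogous computation after swapping $1\leftrightarrow 2$, $a_1\leftrightarrow a_3$) are precisely the two wedges cut out by the lines $\omega_1 = 0$ and $\omega_2 + \kappa\omega_1 = 0$ that appear in the integrand $\sgn(\omega_1)\sgn(\omega_2+\kappa\omega_1)$ defining $E_2(\kappa; u)$, with $\kappa = a_2/\sqrt{D}$ arising naturally from the rotation that diagonalizes the $\theta_2$-Gaussian in terms of the $\theta_1$-variables. Combining the two contributions, invoking the relation \eqref{Equation: relation between M_2 and E_2} between $E_2$ and $M_2$, and symmetrizing under $n \mapsto -n$ (using $M_2(\kappa; -u) = M_2(\kappa; u)$), the sign-boundary corrections telescope and the net result is $\tfrac{1}{2}\,M_2(\kappa; u(n_1,n_2))\, q^{-Q(n)}$ per lattice point, as claimed.

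The main obstacle is this last identification step: executing the orthogonalization and matching the wedge boundaries to $\{\omega_1 = 0\}$ and $\{\omega_2+\kappa\omega_1 = 0\}$ with the correct orientation. One must carefully track the constants $-\sqrt{D}/(4a_1)$ and $-\sqrt{D}/(4a_3)$ in front of $\theta_1$ and $\theta_2$, the Jacobians $1/\sqrt{A_jB_j}$, and the sign functions $\sgn((2a_1n_1+a_2n_2)n_2)$ and $\sgn((a_2n_1+2a_3n_2)n_1)$ introduced by the rescaling. The interplay of these with the discontinuity loci $u_2 = 0$ and $u_1 = \kappa u_2$ of $M_2$ is what ultimately produces both the precise coefficient $\tfrac12$ and the correct sign pattern in the answer.
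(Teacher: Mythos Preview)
Your strategy---compute $\mathcal{E}_\alpha(\tau)$ term by term and identify each contribution with $\tfrac12 M_2(\kappa;u(n))\,q^{-Q(n)}$---is the same as the paper's. However, the paper executes it more directly: for the generic case $n_1 \neq 0$, $n_2 \neq 0$ it simply quotes (from \cite{Higher_depth_QMFs}) a double-integral representation of $M_2(\kappa;u)$ that, after plugging in $u=u(n)$ and $\kappa = a_2/\sqrt{D}$, matches the $n$-th summand of $-\tfrac{\sqrt{D}}{4}\int\!\!\int(\theta_1+\theta_2)/\cdots$ on the nose. Your detour through $E_2$, wedge geometry, and an unsubstantiated ``telescoping'' of sign corrections is unnecessary and not actually carried out; nor is your appeal to ``symmetrizing under $n\mapsto -n$'' valid here, since that map does not preserve the coset $\alpha+\Z^2$ for general $\alpha$, and the identity in the lemma already holds termwise without any such averaging.

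The genuine gap is the boundary cases $n_1 = 0$ or $n_2 = 0$ (which occur when $\alpha\in\J_1^*\cup\J_2^*$). There your rescaling $\nu_j=\omega_j\sqrt{B_j}$ degenerates (since $B_1 = 0$ when $n_2 = 0$, and $B_2=0$ when $n_1=0$), only one of $\theta_1,\theta_2$ contributes, and the integral formula for $M_2$ used in the generic case does not apply because $M_2$ is defined on those loci only through the extension \eqref{Equation: relation between M_2 and E_2}. The paper handles these cases by a separate argument: it sets $f_1(v)\coloneqq M_2(\kappa;u(n))$ (evaluated via \eqref{Equation: relation between M_2 and E_2}) and $f_2(v)$ equal to the surviving Eichler integral, computes $f_1'(v)$ and $f_2'(v)$ explicitly using the known partial derivatives of $E_2$ and the incomplete-gamma formula for $M$, checks $f_1'=f_2'$, and concludes from $\lim_{v\to\infty}f_j(v)=0$. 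This piece is absent from your outline and is essential for the lemma to hold in full generality.
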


\begin{proof}
	The claim follows once we have shown that 
	\begin{equation*}
	\begin{split}
	M_2 (\kappa; u)  = & - \frac{\sqrt{D} n_2 (2 a_1 n_1 + a_2 n_2)}{2 a_1} q^{Q(n)} \int_{- \bar{\tau}}^{i \infty} \frac{ e^{\frac{\pi i (2 a_1 n_1 + a_2 n_2)^2 \omega_1}{2 a_1}}}{\sqrt{-i (\omega_1 + \tau)}} \int_{\omega_1}^{i \infty} \frac{e^{\frac{\pi i D n_2^2 \omega_2}{2 a_1}}}{\sqrt{-i (\omega_2 + \tau)}} d\omega_2 d\omega_1\\
	& - \frac{\sqrt{D} n_1 (a_2 n_1 + 2 a_3 n_2)}{2 a_3} q^{Q(n)}  \int_{- \bar{\tau}}^{i \infty} \frac{e^{\frac{\pi i (a_2 n_1 + 2 a_3 n_2)^2 \omega_1}{2 a_3}}}{\sqrt{-i (\omega_1 + \tau)}} \int_{\omega_1}^{i \infty} \frac{e^{\frac{\pi i D n_1^2 \omega_2}{2 a_3}}}{\sqrt{-i (\omega_2 + \tau)}} d\omega_2 d\omega_1 .
	\end{split}
	\end{equation*}
	There are three different cases to consider, since we do not have the term $n = (0,0)$:
	
	\begin{enumerate}
		\item Both $n_1 \neq 0$ and $n_2 \neq 0$.
		\item We have $n_1 = 0$ and $n_2 \neq 0 \iff u_1 - \kappa u_2 = 0$ and $u_2 \neq 0$.
		\item We have $n_1 \neq 0$ and $n_2 = 0 \iff u_1 - \kappa u_2 \neq 0$ and $u_2 = 0$.
	\end{enumerate}
	We argue as in \cite{Higher_depth_QMFs}, and for the first case obtain that
	
	\begin{equation*}
	\begin{split}
	& M_2 (\kappa; u) = - \frac{u_1}{2\sqrt{v}} \frac{u_2}{\sqrt{v}} q^{\frac{u_1^2}{4v} + \frac{u_2^2}{4v}}  \int_{- \bar{\tau}}^{i \infty} \frac{e^ {\frac{\pi i u_1^2 \omega_1}{2v}}}{\sqrt{-i(\omega_1 + \tau)}} \int_{\omega_1}^{i \infty}  \frac{e^ {\frac{\pi i u_2^2 \omega_2}{2v}}}{\sqrt{-i(\omega_2 + \tau)}} d\omega_2 d\omega_1 \\
	& - \frac{u_1 - \kappa u_2}{2\sqrt{ (1+\kappa^2) v}} \frac{u_2 + \kappa u_1}{\sqrt{ (1+\kappa^2) v}} q^{\frac{(u_2 + \kappa u_1)^2}{4 (1+\kappa^2)v} + \frac{(u_1 - \kappa u_2)^2}{4(1+\kappa^2)v}}  \int_{- \bar{\tau}}^{i \infty} \frac{e^ {\frac{\pi i (u_2 + \kappa u_1)^2 \omega_1}{2 (1+\kappa^2)v}}}{\sqrt{-i(\omega_1 + \tau)}} \int_{\omega_1}^{i \infty}  \frac{e^ {\frac{\pi i (u_1 - \kappa u_2)^2 \omega_2}{2(1+\kappa^2)v}}}{\sqrt{-i(\omega_2 + \tau)}} d\omega_2 d\omega_1 .
	\end{split}
	\end{equation*}
	Plugging in the definitions of $u$ and $\kappa$ here yields the result directly.
	
	For case 2 we set $f_1 (v) \coloneqq M_2 ( \kappa; \frac{a_2}{\sqrt{a_1}} \sqrt{v} n_2 , m \sqrt{v} n_2 )$ and we want to prove the equality 
	
	\begin{equation*}
	f_1 (v) = - \frac{\sqrt{D}  a_2 n^2_2}{2 a_1} e^{2 \pi i a_3 n_2^2 \tau} \int_{- \bar{\tau}}^{i \infty} \frac{ e^{\frac{\pi i (a_2 n_2)^2 \omega_1}{2 a_1}}}{\sqrt{-i (\omega_1 + \tau)}} \int_{\omega_1}^{i \infty} \frac{e^{\frac{ \pi i D n_2^2 \omega_2}{2 a_1}}}{\sqrt{-i (\omega_2 + \tau)}} d\omega_2 d\omega_1 .
	\end{equation*}
	Letting $\omega_1 \mapsto \omega_1 - \tau$ and $\omega_2 \mapsto \omega_2 - \tau$ where $\tau = u + iv$ the right-hand side becomes
	
	\begin{equation*} 
	\begin{split}
	&  - \frac{\sqrt{D} a_2}{2 a_1} n_2^2 \int_{2 i v}^{i \infty} \frac{ e^{\frac{\pi i (a_2 n_2)^2 \omega_1}{2 a_1}}}{\sqrt{-i \omega_1 }} \int_{\omega_1}^{i \infty} \frac{e^{\frac{ \pi i D n_2^2 \omega_2}{2 a_1}}}{\sqrt{-i \omega_2}} d\omega_2 d\omega_1 \\
	=&  	\frac{\sqrt{D} a_2}{a_1} n_2^2 \int_{v}^{\infty} \frac{ e^{\frac{- \pi (a_2 n_2)^2 \omega_1}{a_1}}}{\sqrt{\omega_1 }} \int_{\omega_1}^{\infty} \frac{e^{\frac{-\pi D n_2^2 \omega_2}{a_1}}}{\sqrt{\omega_2}} d\omega_2 d\omega_1 \eqqcolon f_2(v) .
	\end{split}
	\end{equation*}
	By \eqref{Equation: relation between M_2 and E_2} we have that 
	
	\begin{equation*}
	f_1(v) = E_2 \left( \kappa; \frac{a_2}{\sqrt{a_1}} \sqrt{v} n_2 , m \sqrt{v} n_2 \right) - \sgn(n_2) E_1 \left(\frac{a_2}{\sqrt{a_1}} \sqrt{v} n_2 \right).
	\end{equation*}
	Considering differentials in $v$ we obtain
	
	\begin{equation*}
	\begin{split}
	f_1 '(v) = & \frac{n_2}{2 \sqrt{v}} \left( \frac{a_2}{\sqrt{a_1}} E_2^{(1,0)} \left(\kappa; \frac{a_2}{\sqrt{a_1}} \sqrt{v} n_2 , m \sqrt{v} n_2 \right) + m E_2^{(0,1)} \left(\kappa; \frac{a_2}{\sqrt{a_1}} \sqrt{v} n_2 , m \sqrt{v} n_2\right) \right)  \\
	& -   \frac{n_2}{2 \sqrt{v}} \sgn(n_2) \frac{a_2}{\sqrt{a_1}} E_1'\left(\frac{a_2}{\sqrt{a_1}} \sqrt{v} n_2\right) \\
	= & \frac{n_2}{2 \sqrt{v}} \left( \frac{2 a_2}{\sqrt{a_1}} e^{\frac{- \pi a_2^2 v n_2^2}{a_1}} E\left(m \sqrt{v} n_2 \right) + \frac{2 (\kappa + 1)}{\sqrt{1+ \kappa^2}} e^{\frac{- \pi \left(m \sqrt{v} n_2 + \frac{\kappa a_2}{\sqrt{a_1}} \sqrt{v} n_2 \right)^2}{1+ \kappa^2}} E(0)  - \sgn(n_2) \frac{a_2}{\sqrt{a_1}} 2 e^{\frac{- \pi a_2^2 v n_2^2}{a_1}} \right) \\
	= & \frac{a_2 n_2}{\sqrt{v a_1}}  e^{\frac{- \pi a_2^2 v n_2^2}{a_1}} \left(  E\left(m \sqrt{v} n_2 \right) - \sgn(n_2) \right) .
	\end{split}
	\end{equation*}
	Since $m > 0$ we have  
	\begin{equation*}
	E\left(m \sqrt{v} n_2 \right) - \sgn(n_2) = M\left(m \sqrt{v} n_2 \right) = \frac{- \sgn(n_2)}{\sqrt{\pi}} \Gamma \left(\frac{1}{2} , \pi m^2 v n_2^2\right),
	\end{equation*} 
	using \eqref{Equation: M(u) in terms of incomplete gamma functions}. Thus we obtain
	
	\begin{equation*}
	f_1 '(v) =  - \frac{a_2 |n_2 |}{\sqrt{v a_1 \pi }}  e^{\frac{- \pi a_2^2 v n_2^2}{a_1}} \Gamma \left(\frac{1}{2} , \pi m^2 v n_2^2 \right) .
	\end{equation*}
	We then consider the differential of $f_2(v)$. Computing directly we obtain
	
	\begin{equation*}
	\begin{split}
	f_2 ' (v) & =  \frac{ \sqrt{D} a_2}{a_1} n_2^2 (-1) \frac{e^{\frac{- \pi (a_2 n_2)^2 v}{a_1}}}{\sqrt{v}} \int_v^{\infty} e^{\frac{- \pi (D n_2^2) \omega_2}{a_1}} \sqrt{\omega_2} \frac{d \omega_2}{\omega_2} \\
	& = \frac{ \sqrt{D} a_2}{a_1} n_2^2 (-1) \frac{e^{\frac{- \pi (a_2 n_2)^2 v}{a_1}}}{\sqrt{v}} \frac{ \sqrt{a_1}}{\sqrt{\pi D n_2^2}} \Gamma \left( \frac{1}{2} , m^2 \pi v n_2^2 \right) = \frac{- a_2 |n_2|}{\sqrt{a_1 v \pi}} e^{ \frac{- \pi (a_2 n_2)^2 v}{a_1}} \Gamma \left( \frac{1}{2} , m^2 \pi v n_2^2 \right) .
	\end{split}
	\end{equation*}
	Setting $f (v) = f_1(v) - f_2(v)$ we see that $f'(v) = 0$, and since $\lim_{v \rightarrow \infty} f(v) = 0$ we obtain that $f_1 (v) = f_2 (v)$ as required.
	
	For case 3 a similar argument holds, setting $f_3 (v) \coloneqq M_2 ( \kappa; 2 \sqrt{a_1} \sqrt{v} n_1 , 0)$. The claim now follows. \qedhere	
\end{proof}

\section{Asymptotic behaviour of the double Eichler integral}
In this section we relate the functions $\mathbb{E}$ and $F$. Letting $F (e^{2 \pi i \frac{h}{k} - t}) \eqqcolon \sum_{m \geq 0} a_{h,k} (m) t^m$ as $t \rightarrow 0^+$, we prove the following Theorem.

\begin{theorem}\label{Theorem: asymptotics agree}
	For $h,k \in \mathcal{Q}$ as determined by Section \ref{Section: quantum set} we have that 
	\begin{equation*}
	\mathbb{E} \left(\frac{h}{k} + \frac{it}{2 \pi} \right) \sim \sum_{m \geq 0} a_{-h,k} (m) (-t)^m .
	\end{equation*}
\end{theorem}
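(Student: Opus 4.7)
The plan is to develop the asymptotic expansion of $\mathbb{E}(h/k + it/(2\pi))$ as $t \to 0^+$ in parallel to the Euler--Maclaurin computation for $F(e^{2\pi i h/k - t})$ carried out in Section~\ref{Section: asymptotic behaviour of F at Certain Roots of Unity}, and then show that the two expansions agree term by term once one accounts for the sign flip $(h,t) \mapsto (-h,-t)$ built into the statement.

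First I would substitute $\tau = h/k + it/(2\pi)$ (so $v = t/(2\pi)$) into the formula of Lemma~\ref{E(tau) is sum of M_2} and split each coefficient $M_2(\kappa; u(n))$ via the identity \eqref{Equation: relation between M_2 and E_2}. This cleanly decomposes $\mathbb{E}$ into an $E_2$-piece, two $\sgn \cdot M$-pieces, and a pure $\sgn \cdot \sgn$-piece. Since $u(n) \sim \sqrt{v}\, n$, the Gaussian inside $E_2$ (together with the Gaussians in $M$ arising from \eqref{Equation: M(u) in terms of incomplete gamma functions}) dominates the growth of $q^{-Q(n)}$; consequently the full $E_2$-contribution is a function of $t$ all of whose Taylor coefficients at $t=0$ vanish, so it is absorbed into the $\sim$-relation. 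The $\sgn \cdot M$-pieces simplify because the factor $e^{-\pi u^2} q^{-Q(n)}$ kills the part of $Q$ corresponding to one of the two summation variables, reducing those pieces to one-dimensional partial theta sums of exactly the type that define $F_2$ and $F_3$. The remaining $\sgn \cdot \sgn$-piece gives a two-dimensional partial theta sum matching $F_1$.

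For each of the resulting partial theta sums I would perform the residue decomposition $n \mapsto \ell + (ks/\delta)\, n'$ exactly as in \eqref{Equation: rewriting F_1 to apply E-M}, apply the Euler--Maclaurin formulas \eqref{Euler-Maclaurin summation formula} and \eqref{Euler-Maclaurin summation formula one dim}, and match coefficient by coefficient against the expansions of $F_1,F_2,F_3$ computed in Section~\ref{Section: asymptotic behaviour of F at Certain Roots of Unity}. Vanishing of the main term on the $\mathbb{E}$-side reduces to precisely the same quadratic Gauss-sum conditions studied via Lemma~\ref{Lemma: Gauss sums vanishing}, so the set $\mathcal{Q}$ plays the identical role on both sides. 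The sign flip $(h,t) \mapsto (-h,-t)$ in the statement is forced by the convention $q^{-Q(n)}$ on the $\mathbb{E}$-side versus $q^{Q(n)}$ in $F$, combined with the complex conjugation $\tau \mapsto -\bar\tau$ hidden in the lower limit of the Eichler integral \eqref{Equation: definition of E (tau)}.

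The main obstacle is the sign bookkeeping. The $\sgn^*$-corrections built into $F_2$ and $F_3$ were inserted precisely to cancel the discontinuities of $\sgn(u_1)\sgn(u_2 + \kappa u_1)$ along the loci $n_1 = 0$ and $n_2 = 0$; one must carefully track how the pairing $\alpha \leftrightarrow 1 - \alpha$ in $\J$, the symmetry $\varepsilon(1-\alpha) = \varepsilon(\alpha)$, and the reindexing $\ell \mapsto -\ell + (-1 + ks/\delta)(1,1)$ used in the Bernoulli-parity cancellation interact with these boundary signs. Once this is done, the odd-$n_j$ Bernoulli terms cancel in pairs on both sides, the surviving even-$n_j$ terms are built from identical data, and term-by-term equality of the two asymptotic series follows at every order in $t$.
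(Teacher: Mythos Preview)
Your decomposition of $M_2$ via \eqref{Equation: relation between M_2 and E_2} into an $E_2$-piece, two $\sgn\cdot M$-pieces, and a $\sgn\cdot\sgn$-piece does not yield convergent sums, so the argument breaks at the first step. The function $E_2(\kappa;u)$ is bounded but does \emph{not} decay as $|u|\to\infty$; it tends to $\sgn(u_1)\sgn(u_2+\kappa u_1)$, so the ``Gaussian inside $E_2$'' you invoke is in the integration variable $\omega$, not in $u$, and provides no suppression of $|q^{-Q(n)}|=e^{tQ(n)}$. The pure $\sgn\cdot\sgn$-piece is then a lattice sum of $e^{tQ(n)}$ and diverges. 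For the $\sgn\cdot M$-pieces, the argument of each $M$ is $\sqrt{v}(2a_1n_1+a_2n_2)/\sqrt{a_1}$ or $\sqrt{v}(a_2n_1+2a_3n_2)/\sqrt{a_3}$, depending on \emph{both} $n_1,n_2$; the Gaussian in $M$ cancels only one square in $Q(n)$, and the residual exponent after completing the square contains a positive $Dn_j^2/(4a_i)$ term, so those sums diverge as well and do not collapse to the one-dimensional partial thetas defining $F_2,F_3$. The convergence of $\sum_n M_2(\kappa;u(n))q^{-Q(n)}$ rests on the full two-variable decay $M_2(\kappa;u)\sim e^{-\pi(u_1^2+u_2^2)}=e^{-2tQ(n)}$, which only just beats $e^{tQ(n)}$; splitting $M_2$ destroys this.

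The paper avoids the obstruction by \emph{not} separating the pieces of $M_2$. It first folds the $\Z^2$-sum into two $\N_0^2$-sums using evenness of $M_2$ and the auxiliary set $\widetilde{\J}=\{(1-\alpha_1,\alpha_2):\alpha\in\J\}$, then replaces $M_2$ by the smooth version $M_2^*$ (with $\sgn^*$) and applies Euler--Maclaurin to the genuinely rapidly decaying function $\mathcal{F}_4(x)=\tfrac12 M_2^*(\kappa;u(x)/\sqrt{2\pi})e^{Q(x)}$ together with its reflection $\widetilde{\mathcal{F}}_4(x)=\mathcal{F}_4(-x_1,x_2)$. The substantive step you are missing is then the identity
\[
\int_0^\infty\bigl(\mathcal{F}_4^{(0,2n_2+1)}(x_1,0)+\widetilde{\mathcal{F}}_4^{(0,2n_2+1)}(x_1,0)\bigr)\,dx_1=(-1)^{n_2}\int_0^\infty\mathcal{F}_1^{(0,2n_2+1)}(x_1,0)\,dx_1
\]
(and its companions for the other Euler--Maclaurin terms), which is where the decomposition \eqref{Equation: relation between M_2 and E_2} is actually exploited, but only \emph{inside} convergent integrals, with parity of the auxiliary functions $c_j(x)=h_j(x/\sqrt{2\pi})e^{Q(x)}$ producing the cancellations. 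The one-dimensional boundary pieces matching $F_2,F_3$ arise not from the $\sgn\cdot M$-terms of $M_2$ but from the jump $M_2-M_2^*$ along $n_1=0$ and $n_2=0$, packaged as the functions $H_1,H_2$.
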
 

\begin{proof}
	Using Lemma \ref{E(tau) is sum of M_2} and that $M_2$ is an even function we have 
	
	\begin{equation*}
	\begin{split}
	\mathbb{E} (\tau) = & \frac{1}{2} \sum_{\alpha \in \J} \varepsilon (\alpha) \sum_{n \in \alpha + \mathbb{N}_0^2} M_2 (\kappa; u(n_1, n_2)) q^{-Q(n_1, n_2)} \\
	& + \frac{1}{2} \sum_{\alpha \in \widetilde{\J}} \widetilde{\varepsilon} (\alpha) \sum_{n \in \alpha + \mathbb{N}_0^2} M_2 (\kappa; u(-n_1, n_2)) q^{-Q(-n_1, n_2)} ,
	\end{split}
	\end{equation*}
	with $\widetilde{\mathcal{J}} \coloneqq \{ (1 - \alpha_1 , \alpha_2) \mid \alpha \in \J \} $ and $\widetilde{\varepsilon} (\alpha_1, \alpha_2) \coloneqq \varepsilon(1-\alpha_1, \alpha_2)$.
	
	In order to be able to apply the Euler-Maclaurin summation formula, we define $M_2^* (\kappa; x_1, x_2)$ by replacing each $\sgn$ with $\sgn^*$. Explicitly, we set
	
	\begin{equation}\label{Equation: definition of M_2^*}
	\begin{split}
	M_2^* (\kappa; u_1, u_2) \coloneqq & \sgn^*(x_1)\sgn^*(x_2) + E_2(\kappa; x_1 + k x_2, x_2) - \sgn^*(x_2) E(x_1 + \kappa x_2) \\
	& - \sgn^*(x_1) E \left( \frac{\kappa x_1}{\sqrt{1+\kappa^2}} + \sqrt{1+\kappa^2} x_2 \right).
	\end{split}
	\end{equation}
	It is easy to see that, using \eqref{Equation: relation between M_2 and E_2} and \eqref{Equation: definition of M_2^*}, we have
	\begin{equation*}
	\begin{split}
	& M_2 (\kappa; u_1(0,x_2), u_2(x_2))  - \lim_{x_1 \rightarrow 0^+} M_2^* (\kappa; u_1(\pm x_1, x_2), u_2(x_2)) = \pm M\left(\sqrt{1+\kappa^2} x_2\right) , \\
	& M_2 (\kappa; u_1(x_1,0), u_2(0))  - \lim_{x_2 \rightarrow 0^+} M_2^* (\kappa; u_1(\pm x_1, x_2), u_2(x_2)) = \pm M( x_1 ) .
	\end{split}
	\end{equation*}

	We then rewrite $\mathbb{E} (\tau) =  \mathcal{E}^* (\tau) + H_1 (\tau) + H_2 (\tau)$, defining
	
	\begin{equation*}
	\begin{split}
	\mathcal{E}^* (\tau) \coloneqq & \frac{1}{2} \sum_{\alpha \in \J} \varepsilon (\alpha) \sum_{n \in \alpha + \mathbb{N}_0^2} M_2^* (\kappa; u(n_1, n_2)) q^{-Q(n_1, n_2)} \\
	& + \frac{1}{2} \sum_{\alpha \in \widetilde{\J}} \widetilde{\varepsilon} (\alpha) \sum_{n \in \alpha + \mathbb{N}_0^2} M_2^* (\kappa; u(-n_1, n_2)) q^{-Q(-n_1, n_2)} ,
	\end{split}
	\end{equation*}
	along with the boundary terms 
	
	\begin{equation*}
	\begin{split}
	H_1 (\tau) \coloneqq - &\frac{1}{2} \sum_{\alpha \in \J^*_1} \varepsilon(\alpha) \sgn^* (\alpha_1) \\
	\times &\left( \sum_{j \in \alpha_2 + \mathbb{N}_0 } M \left( j  \sqrt{(1+\kappa^2) v} \right) q^{-a_3 j^2} -  \ \sum_{j \in 1 - \alpha_2 + \mathbb{N}_0 }  M \left( j \sqrt{(1+\kappa^2) v} \right) q^{-a_3 j^2}  \right) 
	\end{split}
	\end{equation*}
	and
	
	\begin{equation*}
	H_2 (\tau) \coloneqq - \frac{1}{2} \sum_{\alpha \in \J^*_2} \varepsilon(\alpha) \sgn^* (\alpha_2) \left( \sum_{j \in \alpha_1 + \mathbb{N}_0 } M \left( j  \sqrt{v} \right) q^{-a_1 j^2} -  \sum_{j \in 1 - \alpha_1 + \mathbb{N}_0 }  M \left( j \sqrt{v} \right) q^{-a_1 j^2} \right) .
	\end{equation*}
	If $\alpha_1 \in \Z$ (resp. $\alpha_2 \in \Z$) then for the $n_1 = 0$ (resp. $n_2 = 0$) we take the limit $n_1 \rightarrow 0$ (resp. $n_2 \rightarrow 0$) in the $M_2^*$ functions. 
	
	\begin{remark}
		In the case that for every $(a, x)$  in $\J^*_1$, the element $(b, 1-x)$ also exists in $\J^*_1$, along with the conditions $\sgn^*(a) = \sgn^*(b)$ and $\varepsilon(a, x) = \varepsilon(b, 1-x)$, then $H_1 = 0$ identically. A similar statement holds for the function $H_2$.
	\end{remark}
	
	Using techniques similar to those in Section \ref{Section: asymptotic behaviour of F at Certain Roots of Unity} we next determine the asymptotic behaviour of $\mathcal{E}^* , H_1 $ and $H_2$. First we rewrite $\mathcal{E}^*$ as
	
	\begin{equation*}
	\begin{split}
	\mathcal{E}^* \left(\frac{h}{k} + \frac{it}{2 \pi} \right) & = \sum_{\alpha \in \J} \varepsilon (\alpha) \sum_{0 \leq \ell \leq \frac{ks}{\delta} -1} e^{-2 \pi i \frac{h}{k} Q(\ell_1 + \alpha_1, \ell_2 + \alpha_2) } \sum_{n \in \frac{\delta (\ell + \alpha)}{ks} + \mathbb{N}_0^2} \mathcal{F}_4 \left( \frac{ks}{\delta}\sqrt{t} n \right) \\
	& + \sum_{\alpha \in \widetilde{\J}} \widetilde{\varepsilon} (\alpha) \sum_{0 \leq \ell \leq \frac{ks}{\delta} -1} e^{-2 \pi i \frac{h}{k} Q(-(\ell_1 + \alpha_1), \ell_2 + \alpha_2) } \sum_{n \in \frac{\delta (\ell + \alpha)}{ks} + \mathbb{N}_0^2} \widetilde{\mathcal{F}_4} \left( \frac{ks}{\delta}\sqrt{t} n \right) ,
	\end{split}
	\end{equation*}
	with $\mathcal{F}_4 (x) \coloneqq \frac{1}{2} M_2^* (\kappa; \frac{1}{\sqrt{2 \pi}}( u (x_1, x_2))) e^{Q(x)}$ and $\widetilde{\mathcal{F}}_4 (x) \coloneqq \mathcal{F}_4 (u(-x_1, x_2))$.
	
	Then the contribution from the $\mathcal{F}_4$ term to the main term in the Euler-Maclaurin summation formula is given by
	
	\begin{equation*}
	\frac{\delta^2}{k^2 s^2 t} \mathcal{I}_{\mathcal{F}_4} \sum_{\alpha \in \J} \varepsilon (\alpha) \sum_{0 \leq \ell \leq \frac{ks}{\delta} -1} e^{-2 \pi i \frac{h}{k} Q(\ell+ \alpha) } ,
	\end{equation*}
	which vanishes, conjugating a result from Section \ref{Section: asymptotic behaviour of F at Certain Roots of Unity}. Similarly, the contribution from the $\widetilde{\mathcal{F}}_4$ to the main term of the Euler-Maclaurin summation formula also vanishes.
	
	The second term of \eqref{Euler-Maclaurin summation formula} is (again noting as in Section \ref{Section: asymptotic behaviour of F at Certain Roots of Unity} that terms where $n_2$ is even vanish)
	
	\begin{equation*}
	\begin{split}
	-2 \sum_{\alpha \in \J^*} \varepsilon (\alpha) & \sum_{0 \leq \ell \leq \frac{ks}{\delta} -1} e^{-2 \pi i \frac{h}{k} Q(\ell+ \alpha)} \sum_{n_2 \geq 0} \frac{ B_{2 n_2 + 2} \left( \frac{ \delta (\ell_2 + \alpha_2)}{ks}\right) }{(2n_2 + 2)!} \\
	& \times \int_{0}^\infty \left( \mathcal{F}_4^{(0, 2 n_2 +1)} (x_1, 0) + \widetilde{\mathcal{F}}_4^{(0, 2 n_2 +1)} (x_1, 0) \right) dx_1 \left( \frac{k^2 s^2 t}{\delta^2} \right) ^{n_2} .
	\end{split}
	\end{equation*} 
	We now claim that
	
	\begin{equation}\label{F_1equalsF_4}
	\int_0^\infty \left( \mathcal{F}_4^{(0, 2n_2 + 1)} (x_1, 0) + \widetilde{\mathcal{F}_4}^{(0,2n_2+1)} (x_1,0) \right) dx_1 = (-1)^{n_2} \int_0^\infty \mathcal{F}_1^{(0, 2n_2 + 1)} (x_1,0) dx_1,
	\end{equation}
	corresponding to the terms arising in equation \eqref{Equation: second term of asymptotics}. First, we simplify the right-hand side of \eqref{F_1equalsF_4}
	
	\begin{equation*}\begin{split}
	(-1)^{n_2} \int_0^\infty \mathcal{F}_1^{(0, 2n_2 + 1)} (x_1,0) dx_1 & = \left[ \frac{\partial^{2n_2+1}}{\partial x_2^{2n_2 + 1}} \int_0^\infty \mathcal{F}_1 (x_1, x_2) dx_1 \right]_{x_2= 0} \\
	&= \left[ \frac{\partial^{2n_2+1}}{\partial x_2^{2n_2 + 1}} e^{\frac{-m^2 x_2^2}{4}} \int_0^\infty e^{- \left(\sqrt{a_1} x_1 + \frac{a_2 x_2}{2 \sqrt{a_1}}\right)^2} dx_1 \right]_{x_2= 0} .
	\end{split}
	\end{equation*}
	Taking the integral without differentiating and substituting $\omega =\frac{1}{\sqrt{\pi}} \left( \sqrt{a_1}x_1 + \frac{a_2 x_2}{2 \sqrt{a_1}} \right)$ we get
	
	\begin{equation*}\begin{split}
	\int_0^\infty e^{- \left(\sqrt{a_1} x_1 + \frac{a_2 x_2}{2 \sqrt{a_1}} \right)^2} dx_1 &= \sqrt{\frac{\pi}{a_1}} \int_{\frac{a_2 x_2}{2\sqrt{a_1 \pi}}}^\infty e^{-\pi \omega^2} d \omega = \frac{\sqrt{\pi}}{2 \sqrt{a_1}} \left( 1 - E\left( \frac{a_2 x_2}{2 \sqrt{a_1 \pi}} \right) \right) .
	\end{split}
	\end{equation*}
	Therefore the right-hand side of \eqref{F_1equalsF_4} is given by
	
	\begin{equation*}
	\begin{split}
	 \left[ \frac{\sqrt{\pi}}{2 \sqrt{a_1}} \frac{\partial^{2n_2+1}}{\partial x_2^{2n_2 + 1}} e^{\frac{-m^2 x_2^2}{4}} \left( 1 - E\left( \frac{a_2 x_2}{2 \sqrt{a_1 \pi}} \right) \right) \right]_{x_2 =0}	= - \left[ \frac{\sqrt{\pi}}{2 \sqrt{a_1}} \frac{\partial^{2n_2+1}}{\partial x_2^{2n_2 + 1}} e^{\frac{-m^2 x_2^2}{4}} E\left( \frac{a_2 x_2}{2 \sqrt{a_1 \pi}} \right) \right]_{x_2 =0} ,
	\end{split}
	\end{equation*}
	since the other terms vanish under differentiation and setting $x_2 =0$.
	
	Next we concentrate on the left-hand side of \eqref{F_1equalsF_4}, and to ease notation we set
	
	\begin{equation*}
	\begin{split}
	& h_1 (x_1, x_2) \coloneqq E_2 (\kappa; u(x_1, x_2)) ,\\
	& h_2 (x_1, x_2) \coloneqq E(u_1(x_1,x_2)) ,\\
	& h_3 (x_1, x_2) \coloneqq E \left(\frac{\kappa x_1}{\sqrt{1+\kappa^2}} + \sqrt{1+\kappa^2}x_2 \right) .
	\end{split}
	\end{equation*}
	We also define 
	\begin{equation*}
	\begin{split}
	& c_0 (x_1, x_2) \coloneqq e^{Q(x_1, x_2)} , \\
	& c_j (x_1, x_2) \coloneqq h_j \left( \frac{1}{\sqrt{2 \pi}} (x_1, x_2) \right) e^{Q(x_1,x_2)} ,\\
	\end{split}
	\end{equation*}
	for $j =1,2,3$.
	
	By definition of $M_2^* (\kappa; u)$ we compute that 
	
	\begin{equation*}
	\begin{split}
	& \mathcal{F}_4^{(0,2n_2+1)} (x_1,0) + \widetilde{\mathcal{F}}_4^{(0,2n_2+1)} (x_1,0)\\
	= & \frac{1}{2} (c_0^{(0,2n_2+1)} (x_1,0) + c_1^{(0,2n_2+1)} (x_1,0) - c_2^{(0,2n_2+1)} (x_1,0) - c_3^{(0,2n_2+1)} (x_1,0)) \\
	& + \frac{1}{2} (- c_0^{(0,2n_2+1)} (-x_1,0) + c_1^{(0,2n_2+1)} (-x_1,0) - c_2^{(0,2n_2+1)} (-x_1,0) + c_3^{(0,2n_2+1)} (-x_1,0)) \\
	= & c_0^{(0,2n_2+1)} (x_1,0) - c_2^{(0,2n_2+1)} (x_1,0) ,
	\end{split}
	\end{equation*}
	using that $c_0$ and $c_1$ are even, whereas $c_2$ and $c_3$ are odd.
	
	Then we are considering the expression
	
	\begin{equation*}
	\begin{split}
	& - \frac{\partial^{2n_2+1}}{\partial x_2^{2n_2 + 1}} \left[ \int_{0}^\infty  \left( e^{Q(x_1,x_2)} - e^{Q(x_1,x_2)} E \left(\frac{1}{\sqrt{2 \pi}} u_1 (x_1, x_2) \right) \right) dx_1 \right]_{x_2=0} \\
	= & - \frac{\partial^{2n_2+1}}{\partial x_2^{2n_2 + 1}} \left[ \int_{0}^\infty   e^{Q(x_1,x_2)} M \left(\frac{1}{\sqrt{2 \pi}} u_1 (x_1, x_2)\right) dx_1  \right]_{x_2 =0} \\
	= & - \frac{\partial^{2n_2+1}}{\partial x_2^{2n_2 + 1}} \left[ e^{a_3 x_2^2}   \int_{0}^\infty e^{a_1 x_1^2 + a_2 x_1x_2} M\left(\frac{1}{\sqrt{2 \pi}} u_1 (x_1, x_2) \right) dx_1 \right]_{x_2 =0} .
	\end{split}
	\end{equation*}
	Taking the integral without differentiating, and letting $\omega = \frac{1}{\sqrt{2 \pi}} u_1(x_1, x_2)$ we obtain
	
	\begin{equation*}
	\begin{split}
	& -e^{\frac{m^2 x_2^2}{4}} \int_{ \frac{a_2 x_2 }{\sqrt{2 \pi a_1}}}^\infty M(\omega) e^{\frac{\pi \omega^2}{2}} \sqrt{ \frac{\pi}{2 a_1}} d\omega \\
	= & - \sqrt{ \frac{\pi}{2 a_1}} e^{\frac{m^2 x_2^2}{4}} \left( \int_0^\infty M(\omega) e^{\frac{\pi \omega^2}{2}} d \omega - \int_0^{ \frac{a_2 x_2 }{\sqrt{2 \pi a_1}}} M(\omega) e^{\frac{\pi \omega^2}{2}} d \omega \right) .
	\end{split}
	\end{equation*}
	After differentiating an odd number of times and evaluating at $x_2 =0 $ the terms arising from the first intgeral here vanish, and we decompose the second integral using $M(\omega) =  E(\omega) - 1$. Since $E(\omega)$ is odd, the contributions from this term also vanish, and overall we are left with
	
	\begin{equation*}
	\begin{split}
	& -  \sqrt{ \frac{\pi}{2 a_1}} \frac{\partial^{2n_2+1}}{\partial x_2^{2n_2 + 1}} \left[ e^{\frac{m^2 x_2^2}{4}} \int_0^{ \frac{a_2 x_2 }{\sqrt{2 \pi a_1}}}  e^{\frac{\pi \omega^2}{2}} d \omega \right]_{x_2 =0} \\
	=& -  \sqrt{ \frac{\pi}{2 a_1}} i^{- 2n_2 - 1} \frac{\partial^{2n_2+1}}{\partial x_2^{2n_2 + 1}}  \left[ e^{- \frac{m^2 x_2^2}{4}} \int_0^{ \frac{a_2 x_2 i}{\sqrt{2 \pi a_1}}}  e^{\frac{\pi \omega^2}{2}} d \omega \right]_{x_2 =0}  .
	\end{split}
	\end{equation*}
	
	The integral in question is therefore given by
	
	\begin{equation*}
	\begin{split}
	i \sqrt{2} \int_0^{ \frac{a_2 x_2}{2 \sqrt{ \pi a_1}}} e^{- \pi \omega^2} d\omega =  \frac{i}{\sqrt{2}} E\left(\frac{a_2 x_2}{2 \sqrt{a_1 \pi}} \right) .
	\end{split}
	\end{equation*}
	Given this, it is easy to conclude that the left-hand side of \eqref{F_1equalsF_4} is given by
	
	\begin{equation*}
	- \sqrt{ \frac{\pi}{4 a_1}} (-1)^{n_2} \frac{\partial^{2n_2+1}}{\partial x_2^{2n_2 + 1}} \left[  e^{- \frac{m^2 x_2^2}{4}} E\left(\frac{a_2 x_2}{2 \sqrt{a_1 \pi}} \right) \right]_{x_2 =0} 
	\end{equation*}
	as claimed.
	
	The third term in the Euler-Maclaurin summation formula is given by
	
	\begin{equation*}
	\begin{split}
	-2 \sum_{\alpha \in \J^*} \varepsilon (\alpha) & \sum_{0 \leq \ell \leq \frac{ks}{\delta} -1} e^{-2 \pi i \frac{h}{k} Q(\ell+ \alpha)} \sum_{n_1 \geq 0} \frac{ B_{2 n_1 + 2} \left( \frac{ \delta (l_1 + \alpha_1)}{ks}\right) }{(2n_1 + 2)!} \\
	& \times \int_{0}^\infty \left( \mathcal{F}_4^{( 2 n_1 +1, 0)} (0, x_2) + \widetilde{\mathcal{F}}_4^{( 2 n_2 +1,0)} (0, x_2) \right) dx_2 \left( \frac{k^2 s^2 t}{\delta^2} \right) ^{n_1} ,
	\end{split}
	\end{equation*}
	again observing that the terms with even $n_1$ vanish. A similar argument to before gives us that this is equal to
	
	\begin{equation*}
	\begin{split}
	-2 \sum_{\alpha \in \J^*} \varepsilon (\alpha) \sum_{0 \leq \ell \leq \frac{ks}{\delta} -1} e^{-2 \pi i \frac{h}{k} Q(\ell+ \alpha)} & \sum_{n_1 \geq 0} \frac{ B_{2 n_1 + 2} \left( \frac{ \delta (\ell_1 + \alpha_1)}{ks}\right) }{(2n_1 + 2)!} \\
	& \times \int_{0}^\infty \mathcal{F}_1^{(2n_1+1,0)} (0, x_2) (-1)^{n_1} dx_2 \left( \frac{k^2 s^2 t}{\delta^2} \right) ^{n_1} .
	\end{split}
	\end{equation*}
	
	The final term in \eqref{Euler-Maclaurin summation formula} is 
	
	\begin{equation*}
	\begin{split}
	2  \sum_{\alpha \in \J^*} \varepsilon (\alpha) \sum_{0 \leq \ell \leq \frac{ks}{\delta} -1}  e^{-2 \pi i \frac{h}{k} Q(\ell+ \alpha)} & \sum_{\substack{n_1, n_2 \geq 0 \\ n_1 \equiv n_2 \mod{0}}} \frac{B_{n_1 +1} \left( \frac{ \delta (\ell_1 + \alpha_1)}{ks}\right)}{(n_1 + 1)!}  \frac{B_{n_2 +1} \left( \frac{ \delta (\ell_2 + \alpha_2)}{ks}\right)}{(n_2 + 1)!} \\
	& \times \left(  \mathcal{F}_4^{(n_1, n_2 )} (0, 0) - (-1)^{n_1} \widetilde{\mathcal{F}}_4^{(n_1,n_2)} (0,0) \right) \left( \frac{ks \sqrt{t}}{\delta} \right)^{n_1 + n_2} .
	\end{split}
	\end{equation*}
	Via a similar argument to the one in \cite{Higher_depth_QMFs}, we have that
	
	\begin{equation*}
	\mathcal{F}_4^{(n_1, n_2 )} (0, 0) - (-1)^{n_1} \widetilde{\mathcal{F}_4}^{(n_1,n_2)} (0,0) = i^{n_1 + n_2} \mathcal{F}_1^{(n_1, n_2)} (0,0).
	\end{equation*}
	We can see this by decomposing the left-hand side as
	
	\begin{equation*}
	\mathcal{F}_4^{(n_1, n_2 )} (0, 0) - (-1)^{n_1} \widetilde{\mathcal{F}_4}^{(n_1,n_2)} (0,0) = c_0^{(n_1,n_2)} (0,0) - c_3^{(n_1,n_2)} (0,0) .
	\end{equation*}
	Using $c_3(-x_1, -x_2) = - c_3 (x_1, x_2) $ we have that 
	
	\begin{equation*}
	c_3^{(n_1,n_2)} (0,0) = (-1)^{n_1 + n_2 + 1} c_3^{(n_1,n_2)} (0,0) .
	\end{equation*}
	Since we have only cases where $n_1 \equiv n_2 \pmod{2}$ the contribution from the $c_3$ terms vanishes. Thus we are left with
	
	\begin{equation*}
	c_0^{(n_1,n_2)} (0,0) = i^{n_1 + n_2} \left[ \frac{\partial^{n_1}}{\partial x_1^{n_1}} \frac{\partial^{n_2}}{\partial x_2^{n_2}} e^{- Q(x_1, x_2)}  \right]_{x_1 = x_2= 0} = i^{n_1 + n_2} \mathcal{F}_1 (0,0).
	\end{equation*}
	
	Now consider the asympototic behaviour of the functions $H_1$ and $H_2$. Set $\mathcal{F}_5 (x) \coloneqq M ( {\frac{\sqrt{2}}{\sqrt{\pi}}} x) e^{a_3 x^2}$ and note that
	
	\begin{equation*}
	\mathcal{F}_5^{2m} (0) = (-1)^{m+1} \left[ \frac{\partial^{2m}}{\partial x^{2m}} e^{-a_3 x^2} \right] = (-1)^{m+1} \mathcal{F}_2^{2m} (0) .
	\end{equation*}
	
	The contribution to the Euler-Maclaurin summation formula in one dimension \eqref{Euler-Maclaurin summation formula one dim} arising from the $H_1$ term is then given by
	
	\begin{equation*}
	\begin{split}
	\frac{1}{2} & \sum_{\alpha \in \J^*_1} \varepsilon(\alpha) \sgn^* (\alpha_1) \sum_{0\leq r \leq \frac{ks}{\delta} - 1} e^{\frac{2 \pi i h}{k} a_3 (r + (1 - \alpha_2))^2} \sum_{m \geq 0} \frac{ B_{2m+1} \left( \frac{\delta (r + (1 - \alpha_2))}{ks} \right)}{(2m+1)!} \mathcal{F}_5^{(2m)} (0) \left( \frac{k^2 s^2 t}{\delta^2} \right)^{m} \\
	& - \frac{1}{2} \sum_{\alpha \in \J^*_1}  \varepsilon(\alpha) \sgn^* (\alpha_1) \sum_{0\leq r \leq \frac{ks}{\delta} - 1} e^{\frac{2 \pi i h}{k} a_3 (r + \alpha_2)^2} \sum_{m \geq 0} \frac{ B_{2m+1} \left( \frac{\delta (r + \alpha_2)}{ks} \right)}{(2m+1)!} \mathcal{F}_5^{(2m)} (0) \left( \frac{k^2 s^2 t}{\delta^2} \right)^{m} .
	\end{split}
	\end{equation*}
	Similarly, for the $H_2$ term we obtain the contribution
	
	\begin{equation*}
	\begin{split}
	\frac{1}{2} & \sum_{\alpha \in \J^*_2} \varepsilon(\alpha) \sgn^* (\alpha_2) \sum_{0\leq r \leq \frac{ks}{\delta} - 1} e^{\frac{2 \pi i h}{k} a_1 (r + (1 - \alpha_1))^2} \sum_{m \geq 0} \frac{ B_{2m+1} \left( \frac{\delta (r + (1 - \alpha_1))}{ks} \right)}{(2m+1)!} \mathcal{F}_6^{(2m)} (0) \left( \frac{k^2 s^2 t}{\delta^2} \right)^{m} \\
	& - \frac{1}{2} \sum_{\alpha \in \J^*_2} \varepsilon(\alpha) \sgn^* (\alpha_2) \sum_{0\leq r \leq \frac{ks}{\delta} - 1} e^{\frac{2 \pi i h}{k} a_1 (r + \alpha_1)^2} \sum_{m \geq 0} \frac{ B_{2m+1} \left( \frac{\delta (r + \alpha_1)}{ks} \right)}{(2m+1)!} \mathcal{F}_6^{(2m)} (0) \left( \frac{k^2 s^2 t}{\delta^2} \right)^{m} ,
	\end{split}
	\end{equation*}
	where  $\mathcal{F}_6 (x) \coloneqq M ( \frac{\sqrt{2}}{\sqrt{\pi}} x ) e^{a_1 x^2}$. Noting that
	
	\begin{equation*}
	\mathcal{F}_6^{2m} (0) = (-1)^{m+1} \left[ \frac{\partial^{2m}}{\partial x^{2m}} e^{-a_1 x^2} \right] = (-1)^{m+1} \mathcal{F}_3^{2m} (0) .
	\end{equation*}
	gives the claim. \qedhere
	
\end{proof}

\section{Proof of Theorem 1.2}\label{Section: Proof of main theorem}
We are now ready to prove a refined version of Theorem \ref{Theorem: introduction version of main theorem}.

\begin{theorem}\label{Theorem: Main Theorem - Paper}
	Let $\mathcal{Q}$ be as in Section \ref{Section: quantum set}. The function $\widehat{F} \colon \mathcal{Q} \rightarrow \C$  defined by $\widehat{F} (\frac{h}{k}) \coloneqq F(e^{2 \pi i s \frac{h}{k}})$ is a sum of components of a vector-valued quantum modular form of depth two and weight one. \\
	Moreover, with $\mathcal{A}$ and $\mathcal{B}$ as in Section \ref{Section: Multiple Eichler Integrals of weight 1}, if the functions 
	\begin{equation*}
	e \left( \frac{A_1^2 + D A_2^2}{2 a_1 s} \right) \hspace{10pt} \text{ , } \hspace{10pt} e \left( \frac{B_1^2 + D B_2^2}{2 a_3 s} \right)
	\end{equation*}
	are constant across all choices of $A \in \mathcal{A}$ and $B \in \mathcal{B}$ then $\widehat{F} (\frac{h}{k})$ is a single component of a vector-valued quantum modular form of depth two and weight one.
\end{theorem}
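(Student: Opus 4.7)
The plan is to transfer the vector-valued quantum modular structure of the double Eichler integral $\mathcal{E}$ (established in Proposition~\ref{Proposition: vector-valued QMF}) to $\widehat{F}$ via the asymptotic identification of Theorem~\ref{Theorem: asymptotics agree}. First I would unpack the relation $\mathbb{E}(\tau) = \mathcal{E}(\tau/s)$ together with Theorem~\ref{Theorem: asymptotics agree} to obtain a precise correspondence between the radial asymptotic expansions of $\mathcal{E}$ at rationals in $\mathcal{Q}$ and the Taylor coefficients $a_{h,k}(m)$ of $F(e^{2\pi i h/k - t})$ as $t \to 0^+$. In particular, the constant term identifies $\widehat{F}(h/k) = a_{sh,k}(0)$ with the boundary value of $\mathcal{E}$ at a corresponding rational in $\mathcal{Q}$, and crucially the match is to all orders in $t$ (this is the sense in which the forms are ``strong'').

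Next I would compute the cocycle for $\widehat{F}$ under a matrix $M = \left(\begin{smallmatrix} a & b \\ c & d \end{smallmatrix}\right) \in \text{SL}_2(\Z)$. Since $\widehat{F}$ (defined on $\mathcal{Q}$) and $\mathcal{E}$ (defined on $\H$ and extending asymptotically to $\mathcal{Q}$) share the same radial expansion at each rational, the cocycle
\begin{equation*}
\widehat{F}_j(\tau) - (c\tau+d)^{-1}\sum_{\ell}\chi_{\ell,j}^{-1}(M)\widehat{F}_\ell(M\tau)
\end{equation*}
agrees (as formal asymptotic series at each point of $\mathcal{Q}$) with the corresponding $\mathcal{E}$-cocycle. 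By Proposition~\ref{Proposition: vector-valued QMF}, which invokes Proposition~\ref{Proposition: transformation for vector-valued}, the $\mathcal{E}$-cocycle decomposes into three explicit pieces: a genuine real-analytic integral over $[0,i\infty]\times[\omega_1,i\infty]$, together with cross terms of the form $I_f(\tau)r_g(\tau)$ and $r_f(\tau)r_g(\tau)$ which lie in $\mathsf{Q}_{\kappa}(\chi)\mathcal{O}(R)$. Since $S\mathcal{Q} = \mathcal{Q}$ (verified at the end of Section~\ref{Section: quantum set}) and $T$-invariance of $\widehat{F}$ is immediate from the $q$-series definition, checking the cocycle on the generators $S$ and $T$ of $\text{SL}_2(\Z)$ suffices and exhibits $\widehat{F}$ as a sum of components of a vector-valued quantum modular form of depth two and weight one.

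For the \emph{moreover} statement, I would follow the observation made at the close of the proof of Proposition~\ref{Proposition: vector-valued QMF}: after applying the $S$-transform to each Shimura theta factor and specialising the resulting double sum to the diagonal $(k_1,k_2) = (A_1, DA_2)$, one recovers $I_{A_1,A_2}(-1/\tau)$ weighted by the Gauss-sum type phase $e((A_1^2+DA_2^2)/(2a_1 s))$. When this phase is constant across $A \in \mathcal{A}$, it factors out of the sum and $\mathcal{E}_A$ becomes a single vector-valued component; the analogous argument for $\mathcal{E}_B$ applies under the corresponding constancy of $e((B_1^2+DB_2^2)/(2a_3 s))$ across $B \in \mathcal{B}$. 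Combining both conditions and using the asymptotic identification then places $\widehat{F}$ itself as a single component.

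The hard part will be the first step: verifying rigorously that the all-order asymptotic agreement of Theorem~\ref{Theorem: asymptotics agree} translates -- through the rescaling by $s$ and the sign change $h \mapsto -h$ -- into an identity of cocycles on $\mathcal{Q}$, including the subtle case $\gcd(s,k) > 1$ where $sh/k$ reduces to a fraction of smaller denominator and one must still land inside the quantum set. The remaining bookkeeping, involving the multipliers $\varepsilon_d$ and $\left(\tfrac{\cdot}{d}\right)$ from \eqref{Shmiura theta transformation formula} as well as the indexing across $\mathcal{A}$ and $\mathcal{B}$, is routine but must be tracked carefully to ensure every cross term genuinely lies in the depth-two space defined in Section~\ref{Section: defining QMFs}.
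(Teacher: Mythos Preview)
Your proposal is correct and follows essentially the same route as the paper: identify $\widehat{F}(h/k)$ with the radial limit of $\mathbb{E}$ at $-h/k$ via Theorem~\ref{Theorem: asymptotics agree} (the paper writes this as $a_{hs_1,k/s_2}$ with $s_1 = s/\gcd(s,k)$, $s_2 = \gcd(s,k)$, handling exactly the reduction you flag as the ``hard part''), then invoke Proposition~\ref{Proposition: vector-valued QMF} to inherit the depth-two vector-valued quantum modular structure. Your treatment of the \emph{moreover} clause, tracing back to the constancy of $e(A)$ and $e(B)$ observed at the close of the proof of Proposition~\ref{Proposition: vector-valued QMF}, also matches the paper's reasoning.
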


\begin{proof}
	By Theorem \ref{Theorem: asymptotics agree} we have that 
	\begin{equation*}
	\widehat{F} \left(\frac{h}{k} \right) = \underset{t \rightarrow 0^+}{\lim} F\left(e^{2 \pi i  \frac{sh}{k} - t}\right) = a_{hs_1, \frac{k}{s_2}} = \underset{t \rightarrow 0^+}{\lim} \mathbb{E} \left(- \frac{h}{k} + \frac{it}{2 \pi} \right) ,
	\end{equation*}
	where $s_1 \coloneqq s/ \gcd(s,k)$ and $s_2 \coloneqq \gcd(s, k)$.
	The claim then follows from Proposition \ref{Proposition: vector-valued QMF}. \qedhere
\end{proof}

\begin{example}\textit{(continued)}
	Returning again to our example, and using that $e(A), e(B)$ are constant across $\mathcal{A}, \mathcal{B}$ (as seen in Section \ref{Section: double Eichler integrals}), the above Theorem shows us that this example is a single component of a vector-valued quantum modular form of depth two and weight one with quantum set $\mathcal{Q} = \Q$ on $\text{SL}_2 (\Z)$.
\end{example}

\section{Completed Indefinite Theta Functions}\label{Section: Completed Indefinite theta functions 2}
We now view the function $\mathbb{E} (\tau)$ as the ``purely non-holomorphic" part of an indefinite theta series. For $A \in M_m (\Z)$ a non-singular $m \times m$ matrix, $P \colon \R^m \rightarrow \mathbb{C}$, and $a \in \mathbb{Q}^m$ we define the theta function

\begin{equation*}
\Theta_{A, P, a} (\tau) \coloneqq \sum_{n \in a + \Z^m} P(n) q^{\frac{1}{2} n^T A n} .
\end{equation*}

Set $A_1 \coloneqq \left( \begin{smallmatrix} 2a_1 & a_2 & 2a_1 & a_2 \\ a_2 & 2a_3 & a_2 & 2a_3 \\ 2a_1 & a_2 & 0 & 0 \\ a_2 & 2a_3 & 0 & 0 \end{smallmatrix} \right)$ with associated bilinear form $B_1(x,y) = x^T A_1 y$ and quadratic form $Q_1(x) \coloneqq \frac{1}{2} B_1(x,x)$. We also set $A_0 \coloneqq \left( \begin{smallmatrix} 2a_1 & a_2 \\ a_2 & 2a_3 \end{smallmatrix}
\right) $ and the function $P_0 (n) \coloneqq M_2 (\kappa; 2 \sqrt{a_1} n_1 + \frac{a_2}{\sqrt{a_1}} n_2 , mn_2)$, and for $n \in \R^4$ put 

\begin{equation*}
\begin{split}
P(n) \coloneqq & M_2 \left( \kappa; 2 \sqrt{a_1} n_3 + \frac{a_2}{\sqrt{a_1}} n_4 , mn_4 \right) + (\sgn(n_1) + \sgn(n_3) ) M \left( \frac{a_2 n_3 + 2 a_3 n_4}{\sqrt{a_3}} \right) \\
& + (\sgn (n_2) + \sgn (n_4)) M \left(\frac{2 a_1 n_3 + a_2 n_4}{\sqrt{a_1}} \right) \\
& +  (\sgn (2a_1 n_3 + a_2 n_4) + \sgn(n_1) ) (\sgn(a_2 n_3 + 2a_3 n_4) + \sgn (n_2) ).
\end{split}
\end{equation*}
Note that for $\alpha \in \J^*$ we have

\begin{equation*}
2 \mathcal{E}_{\alpha} (\tau) = \Theta_{- A_0, P_0, \alpha} (\tau).
\end{equation*}
In particular, we have the following Proposition.

\begin{proposition}\label{Proposition: indefinite theta function}
	The function $\mathcal{E}_{\alpha} $ can be viewed as the ``purely non-holomorphic part" of 
	\begin{equation*}
	\Theta_{A_1, P, a} (\tau) = \sum_{n \in a + \Z^4} P(\sqrt{v} n) q^{ Q_1(n)} ,
	\end{equation*}
	where $a \in \frac{1}{s} A_1^{-1} \Z^4$ with $(a_3, a_4) = (\alpha_1, \alpha_2)$. Moreover, $\Theta_{A_1, P, a} (\tau)$ is an indefinite theta function of signature $(2,2)$.
\end{proposition}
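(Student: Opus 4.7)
The first step is the signature computation. The plan is to perform the linear change of variables $u := x + y$, $v := y$ on $\R^4 = \R^2 \oplus \R^2$, where $x = (n_1, n_2)$ and $y = (n_3, n_4)$. A direct expansion of $n^T A_1 n$ from the block form $A_1 = \bigl(\begin{smallmatrix} A_0 & A_0 \\ A_0 & 0 \end{smallmatrix}\bigr)$ yields the decomposition $Q_1(n) = Q(u) - Q(v)$, where $Q$ is the original positive definite binary form associated to $A_0$. Since $Q$ has signature $(2,0)$, this immediately shows that $Q_1$ has signature $(2,2)$, and the two maximal negative subspaces are $\{x = -y\}$ and $\{u = 0\}$.

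Next, to exhibit $\mathcal{E}_\alpha$ as the purely non-holomorphic part, I would unfold $\Theta_{A_1,P,a}(\tau)$ using the same change of variables. Writing $a = (b, \alpha)$ with $b \in \frac{1}{s}\Z^2$, the vector $u = x+y$ ranges over the coset $(b+\alpha) + \Z^2$, which is independent of $v$, while $v$ ranges over $\alpha + \Z^2$. Hence
\begin{equation*}
\Theta_{A_1,P,a}(\tau) = \sum_{v \in \alpha + \Z^2} \sum_{u \in (b+\alpha)+\Z^2} P\bigl(\sqrt{v}\,(u-v,\,v)\bigr) \, q^{Q(u) - Q(v)}.
\end{equation*}
The crucial observation is that, among the four summands of $P$, only the $M_2$-piece depends purely on the coordinates $(n_3, n_4) = v$. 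Summing that piece over $u$ therefore produces the factorisation $\Theta(\tau) \cdot 2 \mathcal{E}_\alpha(\tau)$, where $\Theta(\tau) := \sum_{u \in (b+\alpha)+\Z^2} q^{Q(u)}$ is an ordinary holomorphic positive definite theta series and the $v$-sum matches precisely the right-hand side of Lemma \ref{E(tau) is sum of M_2} in the single-component case.

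The remaining terms in $P$ involve factors $\sgn(n_1), \sgn(n_2)$ of the original coordinates and therefore couple $u$ and $v$ nontrivially. However, each of them contains only a one-dimensional boosted error function $M$ or a product of $\sgn$'s, which are of strictly lower ``non-holomorphic depth'' in the sense of Theorem \ref{Theorem: completion to non-holomorphic}. Comparing these terms with the completion formula $\widehat{\Phi} - \Phi$ from that theorem, one sees that the only contribution producing a genuine depth-two non-holomorphic piece is the $M_2$-summand; all other terms are either holomorphic or of depth one. In this sense $\mathcal{E}_\alpha$, up to the holomorphic factor $\Theta(\tau)$, is the purely non-holomorphic part of $\Theta_{A_1,P,a}$.

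The main technical obstacle will be matching the present data with the framework of Theorem \ref{Theorem: completion to non-holomorphic}: one must exhibit explicit null vectors $C_1', C_2' \in \R^4$ (the natural candidates are $e_3, e_4$, both of which are isotropic for $Q_1$ since $A_1$ vanishes on the lower-right block) and negative-norm vectors $C_1, C_2$ satisfying the six conditions of Section \ref{Section: boosted error functions}. In particular, $C_1, C_2$ must be chosen so that $-B(C_1,C_2)/\sqrt{\Delta(C_1,C_2)} = \kappa = a_2/\sqrt{D}$ and so that $B(C_2, \cdot)/\sqrt{-Q(C_2)}$ and $B(C_{1\perp 2}, \cdot)/\sqrt{-Q(C_{1\perp 2})}$ recover the linear forms $2\sqrt{a_1}\,n_3 + (a_2/\sqrt{a_1})\,n_4$ and $m\,n_4$ appearing in $P$. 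Once such $C_j, C_j'$ are pinned down, positivity of $\Delta(C_1,C_2)$ and of $M_{00}$ reduce to the elementary inequality $D > 0$, and the identification of $\mathcal{E}_\alpha$ with the non-holomorphic part of $\Theta_{A_1,P,a}$ follows from the factorisation above.
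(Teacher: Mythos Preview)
Your plan matches the paper's proof almost exactly: the paper also isolates the $M_2$-summand $P^-$ and records the factorisation $\Theta_{A_1,P^-,a}(\tau)=2\mathcal{E}_\alpha(\tau)\,\Theta_{A_0,1,(a_1-a_3,a_2-a_4)}(\tau)$, then appeals to Theorem~\ref{Theorem: completion to non-holomorphic} with explicit vectors to identify $P$ with $\widehat{\Phi}$ and conclude that $\Theta_{A_1,P,a}$ is a non-holomorphic theta series of weight two and signature $(2,2)$. Your signature computation via $Q_1(n)=Q(x+y)-Q(y)$ is a welcome addition; the paper simply asserts the signature.

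The one place where your guess goes astray is the choice of null vectors. Taking $C_1'=e_3$, $C_2'=e_4$ gives $B_1(e_3,x)=2a_1n_1+a_2n_2$ and $B_1(e_4,x)=a_2n_1+2a_3n_2$, whereas the sign factors actually appearing in $P$ are $\sgn(n_1)$ and $\sgn(n_2)$. The paper's choices are
\[
C_1=(0,1,0,-1)^T,\quad C_2=(1,0,-1,0)^T,\quad C_1'=(0,0,a_2,-2a_1)^T,\quad C_2'=(0,0,-2a_3,a_2)^T,
\]
for which one computes $B_1(C_1,x)=a_2n_3+2a_3n_4$, $B_1(C_2,x)=2a_1n_3+a_2n_4$, $B_1(C_1',x)=-Dn_2$, $B_1(C_2',x)=-Dn_1$. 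These reproduce precisely the four sign functions in $P$, and a short check shows the six conditions of Section~\ref{Section: boosted error functions} hold (with $\Delta(C_1,C_2)>0$ and $M_{00}$ positive definite both reducing to $D>0$). With these vectors in hand, your outline goes through verbatim.
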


\begin{proof}
	To see the first claim, we put $P^- \coloneqq   M_2 (\kappa; 2 \sqrt{a_1} n_3 + \frac{a_2}{\sqrt{a_1}} n_4 , mn_4)$ and we then have
	
	\begin{equation*}
	\Theta_{A_1, P^-, a} (\tau) = 2 \mathcal{E}_{\alpha} (\tau) \Theta_{A_0, 1, (a_1 - a_3, a_2 - a_4)} (\tau) .
	\end{equation*}

	The authors in \cite{Higher_depth_QMFs} give a framework to proceed directly to prove the convergence and modularity properties of $\Theta_{A_1, P, a} (\tau)$ via a Theorem of Vign{\'e}ras, and so instead here we employ Section \ref{Section: boosted error functions} of the present paper, which follows from \cite{Generalised_error_functions}.
	
	Choosing $C_1 = (0,1,0,-1)^T$, $C_2 = (1,0,-1,0)^T$, $C_1' = (0,0,a_2,-2a_1)^T$, and $C_2' = (0,0,-2a_3,a_2)^T$ one can verify that the conditions in Section \ref{Section: boosted error functions} hold. Computing the completion given in Theorem \ref{Theorem: completion to non-holomorphic} along with those in the locally constant product of sign functions gives exactly the terms in $P(n)$ up to a factor of $\frac{1}{\sqrt{2}}$. Therefore, by Theorem \ref{Theorem: completion to non-holomorphic} we see that $\Theta_{A_1, P, a} (\tau)$ is a non-holomorphic theta series of weight $2$, clearly of signature $(2,2)$. \qedhere
\end{proof}

\begin{example}\textit{(continued)}
	For our example we set $A_1 \coloneqq \left( \begin{smallmatrix} 4 & 1 & 4 & 1 \\ 1 & 2 & 1 & 2 \\ 4 & 1 & 0 & 0 \\ 1 & 2 & 0 & 0 \end{smallmatrix} \right)$ along with $A_0 \coloneqq \left( \begin{smallmatrix} 4 & 1 \\ 1 & 2 \end{smallmatrix} \right) $. We also set
	
	\begin{equation*}
	\begin{split}
	P(n) \coloneqq & M_2 \left( \frac{1}{7}; 2 \sqrt{2} n_3 + \frac{1}{\sqrt{2}} n_4 , \frac{\sqrt{7}}{\sqrt{2}}n_4 \right) + (\sgn(n_1) + \sgn(n_3) ) M \left(  n_3 + 2 n_4 \right) \\
	& + (\sgn (n_2) + \sgn (n_4)) M \left(\frac{4 n_3 + n_4}{\sqrt{2}} \right) \\
	& +  (\sgn (4 n_3 + n_4) + \sgn(n_1) ) (\sgn( n_3 + 2 n_4) + \sgn (n_2) ).
	\end{split}
	\end{equation*}
	
	Then, for example, the function $\mathcal{E}_{\left(\frac{1}{4},\frac{1}{4} \right)}$ is seen to be the non-holomorphic part of 
	
	\begin{equation*}
	\Theta_{A_1, P, a} (\tau) = \sum_{n \in a + \Z^4} P(\sqrt{v} n) q^{ Q_1(n)} ,
	\end{equation*}
	with $a \in \frac{1}{4} A_1^{-1} \Z^4$ such that $(a_3, a_4) = \left(\frac{1}{4},\frac{1}{4} \right)$.
\end{example}

\section{Further questions}\label{Section: further questions}
We end by commenting on some related questions.

\begin{enumerate}
	\item Here we discussed only the case where we had a binary quadratic form. It is expected that a somewhat similar situation occurs for more general forms exists, though it is expected to be much more technical. Systematic study of higher depth analogues of $F$ is planned, however this will require more careful treatment due to a variety of factors. Work of Nazaroglu on higher dimension error functions \cite{Nazaroglu} gives a possible pathway to this. Discussions with Kaszian have also revealed some difficulty in defining the quantum set for higher dimensional versions of $F$ in general. In particular, in the present paper we exploited the fact that
	
	\begin{equation*}
	B_m (1-x) = (-1)^m B(x)
	\end{equation*}
	when determining the asymptotic behaviour of $F$. We were then able to deduce vanishing results for possible growing terms via the use of the Euler-Maclaurin summation formula in two dimensions. In higher dimensions this relationship becomes more complex, and using the $n$-dimensional Euler-Maclaurin formula (see e.g. Equation 3.1. in \cite{PeakPositions}) we find more growing terms that we require to vanish. In general, it is anticipated that this will place many restrictions on the quantum set.
	\item Further generalisation of the situation will be explored, in particular to include the holomorphic part of certain indefinite theta functions. That is, replace $Q(n)$ by an indefinite quadratic form of signature $(1,1)$, and take the holomorphic part of a certain family of indefinite theta functions following Zwegers' construction in \cite{zwegers2008mock}. Generically these look like
	
	\begin{equation*}
	\sum_{n \in a + \Z^r} \left( \sgn(B(c_1,n)) - \sgn(B(c_2,n)) \right) e^{2\pi i B(n,b)} q^{Q(n)},
	\end{equation*}
	where $a,b,c_1,c_2$ lie in $\Z^r$ and satisfy certain conditions to ensure convergence (see \cite{zwegers2008mock} for a full definition). Our question is then: are there certain families of these indefinite theta functions with higher depth quantum modularity?
	
	\item It is anticipated that in a following paper certain examples of the quantum modular forms here are viewed as $q$-series, using Larson's ``translation" \cite{larson2015generalized} of work by Griffin, Ono, and Warnaar on generalised Andrews-Gordon identities \cite{griffin2016}.
	\item In this paper we only realised the double Eichler integral (and therefore $F$) as components of a vector-valued quantum modular form, but we did not compute the other components of the vector-valued form explicitly. Further work in this direction is expected, and its implications in representation theory explored. 
\end{enumerate}

\bibliographystyle{amsplain}
\bibliography{Bibliography}

\end{document}